\documentclass[10pt,a4paper]{article}

\usepackage{a4wide}

\usepackage{xfrac}

\usepackage{authblk}

\usepackage{amssymb, amsfonts, amsbsy, latexsym}
\usepackage{graphicx}
\usepackage{tikz}
\usepackage{color}
\usepackage{colortbl}
\usepackage[english]{babel}
\usepackage{url}
\usepackage{amsmath}
\usepackage{amsthm}

\usepackage[utf8]{inputenc}

\usepackage{titlesec}

\titleformat*{\section}{\large\bfseries}
\titleformat*{\subsection}{\normalsize\bfseries}
%\titleformat*{\subsubsection}{\large\bfseries}
\titleformat*{\paragraph}{\large\bfseries}
\titleformat*{\subparagraph}{\large\bfseries}

\newtheorem{theorem}{Theorem}
\newtheorem{definition}[theorem]{Definition}
\newtheorem{proposition}[theorem]{Proposition}
\newtheorem{claim}[theorem]{Claim}
\newtheorem{remark}[theorem]{Remark}
\newtheorem{lemma}[theorem]{Lemma}
\newtheorem{example}[theorem]{Example}
\newtheorem{assumption}[theorem]{Assumption}

\newtheorem{approximation problem}[theorem]{Approximation problem}

\newcommand{\ip}[2]{\left\langle#1,#2\right\rangle}
\newcommand{\abs}[1]{\left|#1\right|}
\newcommand{\norm}[1]{\left\|#1\right\|}

\def\hf{\hat{f}}
\def\hs{\hat{s}}
\def\hh{\hat{h}}

\def\w{\omega}

\def\e{\epsilon}

\def\k{\kappa}
\def\a{\alpha}
\def\b{\beta}
\def\c{\gamma}

\def\NN{\mathbb{N}}

\def\cF{\mathcal{F}}

\def\cH{\mathcal{H}}

\def\cS{\mathcal{S}}

\def\CC{\mathbb{C}}
\def\NN{\mathbb{N}}
\def\ZZ{\mathbb{Z}}

\def\RR{\mathbb{R}}

\begin{document}

\title{Quasi Monte Carlo Time-Frequency Analysis}
\author[1]{Ron Levie}
\author[2]{Haim Avron}
\author[1,3]{Gitta Kutyniok}
%\affil[1]{Department of Mathematics, Technische Universit{\"a}t Berlin}
\affil[1]{Department of Mathematics, Ludwig-Maximilians-Universit{\"a}t M{\"u}nchen}
\affil[2]{School of Mathematical Sciences, Tel Aviv University}
\affil[3]{Department of Physics and Technology, University of Troms{\o}}
\date{}                     %% if you don't need date to appear
\setcounter{Maxaffil}{0}
\renewcommand\Affilfont{\itshape\small}

%\title{Quasi Monte Carlo Time-Frequency Analysis}
%\author{Ron Levie$^{1}$ Haim Avron$^{2}$, , Gitta Kutyniok$^{3,4}$} 
       
%\keywords{Image separation, inpainting, sparsity, cluster coherence, cluster sparsity, cartoon, texture, Shearlets, $l_1$ minimization }

%    author two information
%\address{ $^{1}$ Department of Mathematics, Technische Universit{\"a}t Berlin, 10623 Berlin, Germany}
%\address{ $^{2}$ Department of Mathematics, Mechanics and Informatics, Vietnam National University, 334 Nguyen Trai, Thanh Xuan, Hanoi}
%\address{ $^{3}$ Department of Mathematics, Ludwig-Maximilians-Universit{\"a}t M{\"u}nchen, Munich, Germany} 
%\dedicatory{}

\maketitle

%\begin{center}
%	{\Large \textbf{Quasi Monte Carlo Time-Frequency Analysis}}

%	\vspace{7mm}

%	\vspace{3mm}
	
%	{\large \bf Ron Levie, Haim Avron, Gitta Kutyniok}
	  %$\quad \quad $ { ronlevie@gmail.com  }
	
	%\vspace{3mm}
	
	%{\large \textit{Tel Aviv University}}
%\end{center}

\begin{abstract}
We study signal processing tasks in which the signal is mapped via some generalized time-frequency transform to a higher dimensional time-frequency space, processed there, and synthesized to an output signal. We show how to approximate such methods using a quasi-Monte Carlo (QMC) approach. \textcolor{black}{We consider cases where the time-frequency representation is redundant, having feature axes in addition to the time and frequency axes. The proposed QMC method allows sampling both efficiently and  evenly such redundant time-frequency representations. Indeed, 1) the number of samples required for a certain accuracy is log-linear in the resolution of the signal space, and depends only weakly on the dimension of the redundant time-frequency space, and 2) the quasi-random samples have low discrepancy, so they are spread evenly in the redundant time-frequency space.} One example of such redundant representation is the localizing time-frequency transform (LTFT), where the time-frequency plane is enhanced by a third axis. This higher dimensional time-frequency space improves the quality of some time-frequency signal processing tasks, like \textcolor{black}{the} phase vocoder (an audio signal processing effect). Since the computational complexity of the QMC is log-linear in the resolution of the signal space, this higher dimensional time-frequency space does not degrade the computation complexity of the proposed QMC method. The proposed QMC method is more efficient than standard Monte Carlo methods, since the deterministic QMC sample points are optimally spread in the time-frequency space, while random samples are not.
\end{abstract}

\textbf{Keywords.} 

signal processing, quasi-Monte Carlo, time-frequency analysis, wavelet, phase vocoder

\section{Introduction}

Recently, it was shown that Monte Carlo discretizations of some signal processing tasks based on continuous frames \textcolor{black}{show promising potential} \cite{Ours1,Ours2}. The goal of this paper is to improve the computational complexity of these Monte Carlo methods while retaining their desirable properties. This is done by replacing Monte Carlo with a quasi-Monte Carlo (QMC) discretization. We focus on signal processing using some general class of time-frequency transforms. 

The proposed QMC method is a midway between standard grid-based methods and Monte Carlo methods. On the one hand, the QMC samples have ``random like'' qualities -- in the context of this work this means that they are evenly distributed in the time-frequency space in a non-regular manner. The random like property is advantageous, since, 1) the number of samples needed for a certain accuracy is (up to a log factor) independent of the dimension of the time-frequency space, as opposed to regular grids that increase exponentially \textcolor{black}{with} the dimension, and 2) the even distribution of the sample points makes them suitable for feature extraction in time-frequency signal processing, as opposed to some regular sample schemes (see Subsection \ref{Time-frequency signal processing}). On the other hand, the QMC samples are deterministic. This allows choosing samples more optimally than the random sampling in Monte Carlo, which ultimately makes the proposed QMC method significantly more efficient than the Monte Carlo method. \textcolor{black}{The improved accuracy of QMC with respect to Monte Carlo is illustrated in Figure \ref{fig:comp}, where we compare Mote Carlo LTFT with QMC LTFT.}

Although the Monte Carlo and the QMC approaches are similar in philosophy, as a deterministic method, the theoretical machinery required for analyzing the QMC method is completely different. We base our approximation analysis on the Koksma–Hlawka (KH) inequality \cite{KH_orig} (see Theorem \ref{KoksmaHlawka}). The main challenge in using the KH inequality in our analysis comes from the fact that the error bound depends on the derivatives of the integrand. This is a problem in time-frequency analysis, since high frequency atoms are highly oscillatory with very large derivatives. We overcome this problem by showing that the time-frequency representations of time signals have certain built-in constraints that limit the magnitudes of their derivatives.

\subsection{Time-frequency signal processing}
\label{Time-frequency signal processing}

Time frequency analysis is the theory and methodology of decomposing time signals to their different local frequency components. The theory can be summarized in a general form as follows. Local frequencies are given as time-frequency atoms: signals localized in short time intervals with distinct frequencies. 
Decomposing time signals to their time-frequency content is done via the time-frequency transform, also called the analysis transform. 
%The time-frequency transform maps signals to functions over the time-frequency plane, where the value of the function at each time-frequency pair is the intensity with which the corresponding atom is present in the signal. 
On the other hand, functions in the time-frequency space can be mapped to time signals by the synthesis transform. 
%The synthesis transform multiplies each time-frequency atom by the corresponding intensity of the time-frequency function, and combines all of the resulting atoms into a single time signal. 
The synthesis transform is typically the pseudo inverse, approximate inverse, or adjoint of the time-frequency transform.

Time-frequency signal processing is any method that decomposes a signal to its time-frequency components, manipulates these components, and recombines/synthesizes the resulting atoms to an output time signal. Some examples of time-frequency signal processing are multipliers \textcolor{black}{\cite{ex1,ex2,wave_mult0,New_mult0,New_mult1,New_mult2}}, where each time-frequency component is multiplied by a scalar that depends on the time and frequency of the atom
(with applications, for example, in audio analysis
\cite{ex3}
and improving signal to noise
\cite{ex4}), signal denoising e.g. wavelet shrinkage denoising
\cite{ex5,ex6} \textcolor{black}{and Shearlet denoising \cite{ex7}}, where the coefficient of each time-frequency atom is transformed by some non-linear scalar mapping, and phase vocoder \cite{phase_vocoder1,phase_vocoder2,vocoder_book,vocoder_imp}, where each time-frequency atom is mapped to a different time-frequency atom, and the coefficients undergo some non-linear transformation.

%\subsection{The short time Tourier transform vs. the wavelet transform}

Two prominent examples of the general setting of time-frequency analysis are the short time Fourier transform (STFT) and the 1D continuous wavelet transform (CWT). In the STFT, the atoms are localized at time intervals of a fixed length, meaning that the higher the frequency of an atom, the more oscillations it has. In the CWT, all atoms have a fixed number of oscillations, meaning that the higher the frequency of an atom, the shorter the time interval in which it is localized. An advantage of this property of the CWT is that time-frequency atoms of equal high frequency and nearby times are separated in time due to their short time spread. This is in contrast to the STFT, where such pairs of atoms will be correlated due to their large time supports. In other words, the CWT is better at time-localizing high frequencies than the STFT, and is thus better at isolating time events. This property is important in some time-frequency signal processing tasks, which motivates us to consider \textcolor{black}{ time frequency analysis frameworks that involve the CWT} in this paper. 
However, low frequencies are represented by CWT atoms with large time supports. In \cite{Ours1}, a hybrid transform which uses STFT atoms for low and high frequencies, and CWT for middle frequencies, was proposed, namely the localizing time-frequency transform (LTFT). We focus in this paper on the LTFT.
%We focus on time stretching phase vocoder, an audio effect that slows down an audio signal without dilating its frequency content.

% We explain how the time localization of the CWT helps reducing so called phasiness artifacts, which are known to afflict the STFT based phase vocoder (see Subsection ** for more details).

%\subsection{Continuous vs. discrete wavelet transforms}

The CWT uses the whole continuum $\RR^2$ of time-frequency pairs, and the discrete wavelet transform (DWT) uses a discrete set of time-frequency samples. 
In \cite{Ours1} it was suggested that the DWT is not appropriate for time-frequency analysis of polyphonic audio signals (audio signals that are not concentrated on a small subset of coefficients in the time-frequency plane). The basic argument is that the distance between samples in the DWT becomes exponentially large along the frequency axis, which is inappropriate for representing signals having ``uniformly'' spread time-frequency components (see Figure \ref{fig:3DTF}, left, top). Hence, while the DWT is stably invertible, its atoms are not appropriate for representing time-frequency features for signal processing tasks like phase vocoder. Thus, a different form of discretization of the CWT is required.

\subsection{Non-regular discretization of time-frequency signal processing}

In \cite{Ours1,Ours2} it was shown that choosing the time-frequency samples randomly overcomes the above problem. Such a discretization is called a Monte Carlo method. The focus of this paper is on improving the quality and computational complexity of the Monte Carlo method, by replacing it with a QMC method. %quasi-Monte Carlo method, a method based on deterministic sample points that exhibit random-like behavior. 
In Figure \ref{fig:3DTF}, left, a \textcolor{black}{dyadic} wavelet grid is compared to quasi-random samples, and it is apparent that the quasi-random samples are better spread in the time-frequency plane.

%\subsection{Randomized enhanced continuous wavelet transforms}

The Monte Carlo method of \cite{Ours1,Ours2} also allows enhancing time-frequency analysis as follows. It is sometimes beneficial to consider a higher dimensional time-frequency space, where at each time-frequency point there is a whole space of atoms, instead of just one atom. \textcolor{black}{Namely, instead of having one atom $f_{t,\w}\in L^2(\RR)$ representing each  time-frequency point $(t,\w)\in\RR^2$, we consider a whole space $F_{t,\w}=\{f_{t,\w,c}\ |\ c\in H\}$, where $H$ is some parameter space.  All of the atoms from the space $F_{t,\w}$} represent the same time and frequency, but they differ on other properties. For example, we may enhance the CWT time-frequency space by adding a new axis $c$, which specifies the number of oscillations in the atom. Since different signal features are best represented by different time spans, where harmonic features have large time supports and percussive feature have short time supports, allowing a variety of oscillation numbers in the atoms assures that all types of features are well represented by the atom system. 
\textcolor{black}{The LTFT, introduced in \cite{Ours1,Ours2}, is based on adding the number of oscillations axis to the CWT,  where in addition, low and high frequencies are represented by STFT atoms instead of CWT atoms.}
In Figure \ref{fig:3DTF}, right, we plot the atoms of the LTFT feature space. \textcolor{black}{In general, we call a representation of the form $f_{t,\w,c}$ a \emph{redundant time-frequency representation}.}

%\subsection{Efficiency of quasi-Monte Carlo multi-feature time-frequency analysis}

\textcolor{black}{A common way to discretize continuous time-frequency representations is to sample the continuous system to a discrete frame $\{f_{t_n,\w_n,c_n}\}_n$, satisfying the frame inequality (e.g., as in \cite{Time_freq,Ten_lectures}).  When we use this approach to discretize a redundant time-frequency representation, the frame inequality is satisfies even if we restrict $c$ to a constant, since for each $c_0$, $\{f_{t,\w,c_0}\ |\ (t,\w)\in \RR^2\}$ is a valid continuous time-frequency representation on its own. Hence, nothing in the standard discrete frame approach requires the samples $(t_n,\w_n,c_n)_n$ to be evenly spread in $\RR^2\times H$. This is a problem if we want to represent the axis $c$ faithfully. }

\textcolor{black}{Two advantages in the Monte Carlo method are that 1) the number of samples required for a given error tolerance does not depend on the dimension of the time-frequency space, but only on the resolution of the discrete signal domain, and, 2) the Monte Carlo samples are random, and thus spread roughly evenly in the redundant time-frequency space.} Thus, in principle, we may add as many dimensions as we like to the time-frequency space without degrading computational complexity \textcolor{black}{and well-spreadness}. 
Note that this is not the case if we discretize the time-frequency space by a \textcolor{black}{discrete frame, since the size of the sample set  $(t_n,\w_n,c_n)_n$ increases linearly in the resolution along $H$}. We show in this paper that the proposed QMC method shares \textcolor{black}{the efficiency and well spreadness} properties with the Monte Carlo method, up to some weak dependency on the dimension of the time-frequency space. Moreover, the error rate of the QMC method improves that of the Monte Carlo method \textcolor{black}{(see for example Figure \ref{fig:comp})}, which overall speeds up computations since less samples are required. This makes the QMC method appropriate for redundant time-frequency analysis. 

%\textcolor{blue}{Here elaborate: We could use one cross section. But if we want to faithfully represent all directions, we should use a grid. Say: QMC is efficient in case the average atom has a small number of pixels. $O(1)$ is great, and it's the case for LTFT.}

%\subsection{General quasi-Monte Carlo time-frequency analysis}

We propose in this paper a theory for analyzing QMC discretizations of more general integral transforms, that we term general time-frequency  transforms. We show that under some assumptions, the number of samples in the time-frequency space required by our method depends only weakly on the dimension of the time-frequency space, and is mainly determined by the resolution, or dimension, of the discrete signal space.

\textcolor{black}{We note that related to \cite{Ours1,Ours2}, another line of work that randomly discretizes integral transforms is called \emph{relevant sampling} \cite{relevantS1,relevantS2,relevantS3,relevantS4}. While the goal in \cite{Ours1,Ours2} is to approximate the continuous frame with a quadrature sum, the goal in relevant sampling is to produce a stable sample set. In the context of continuous frames, this means constructing a discrete frame from the continuous frame elements.
}
\textcolor{black}{One advantage of the quadrature approach is that it is directly linked to the continuous time-frequency transform.  As noted above, the CWT is appropriate as a time-frequency feature extractor, and its quadrature approximation retains this property as an approximation. On the other hand, a discrete frame constructed from the CWT is only required to satisfy the frame inequality, not to directly approximate the continuous frame analysis and synthesis operators, and hence need not retain the properties of these continuous transform. 
}

\begin{figure}[!ht]
\centering
\includegraphics[width=0.8\linewidth]{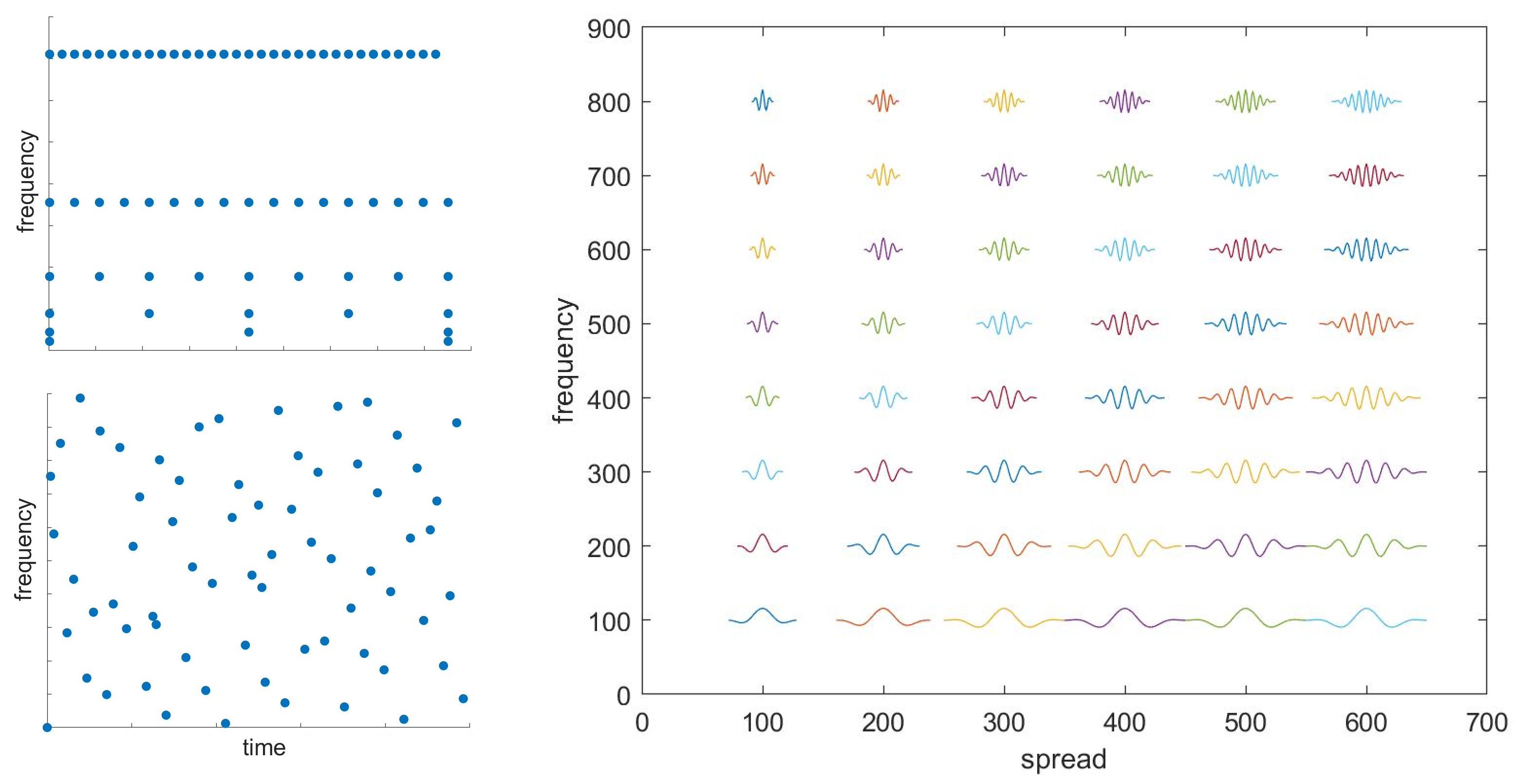}
\caption{
\emph{Left.} Top:  68 grid points of a discrete wavelet transform in the time-frequency plane. Bottom: 68 quasi-random samples of the continuous wavelet transform in the time-frequency plane. The quasi-random samples are better spread in the time-frequency plane than the grid points.
\emph{Right.} An enhanced time frequency feature space, where each local frequency is given as a wavelet atom. In addition to the time and frequency axes, a third axis determines the number of oscillations, or the spread, of each local frequency atom. Moreover, if the time spread of a wavelet atom is above some pre-defined value $S$, the wavelet atom is replaced by a STFT atom with the same frequency, and time spread $S$.  
%The additional axis improves the quality of time-frequency signal processing tasks, since different signal features are best represented
%by a different balance between the time and the frequency spreads.
}
\label{fig:3DTF}
\end{figure}

%\newpage

\subsection{Main contribution}

\textcolor{black}{In this paper, a discrete signal is an element of a finite dimensional subspace $\cS_M$, of dimension $M$, of the infinite dimensional signal space $\cH$. For example, time signals in $\cH=L^2(\RR)$ can be discretized as linear splines with knots at a predefined grid of $M$ time points. We call $M$ the resolution of the discrete signal. On the one hand, in this approach discrete signals are elements of the infinite dimensional signal space $\cH$, and can hence be analyzed in $\cH$. On the other hand, discrete signals are specified by $M$ scalars, so they are suitable to numerical analysis. We summarize our main contribution as follows.}

\begin{enumerate}
	\item 
	We consider a quasi-Monte Carlo discretization of the LTFT. The method has an error rate of 
	\begin{equation}
	O\left(\frac{M}{N}(\log N)^{d-1}\right),
	\label{eq:QMCError}
	\end{equation}
	where $M$ is the resolution of the signal space, $N$ is the number of samples, and $d=3$ is the dimension of the time-frequency space. This improves the error rate $O(\sqrt{\frac{M}{N}})$ of the Monte Carlo method of \cite{Ours1}.  The method is shown to be of computational complexity $O(N)$. When the number of samples is $N=M\log^{d-1+\e}(M)$, with any $\e>0$, the method is asymptotically accurate. %, with error less than $O(\frac{1}{A})$.
	\item
	We consider a time dilation phase vocoder method based on the LTFT, replacing the Monte Carlo method of \cite{Ours1} with a QMC method.  In practice, for phase vocoder with time dilation $D\in\NN$, a choice of $N=4DM$ samples is sufficient for a high-quality result. 
	Sound examples and code of QMC LTFT  phase vocoder are available at \url{https://github.com/RonLevie/LTFT-Phase-Vocoder}.
	%In practice, the new quasi-Monte Carlo method reduces computation time by a factor of *** in comparison to the Monte Carlo Method of [].
	\item
	In the general case, we propose a theory for analyzing quasi-Monte Carlo discretizations of a class of integral transforms that we called general time-frequency transforms. General time-frequency transforms include the STFT, CWT, LTFT, and higher dimensional transforms like the Shearlet and Curvelet transforms. We show that in some general setting, the discretization error is given by (\ref{eq:QMCError}). 
\end{enumerate}

\begin{figure}[!ht]
\centering
\includegraphics[width=0.6\linewidth]{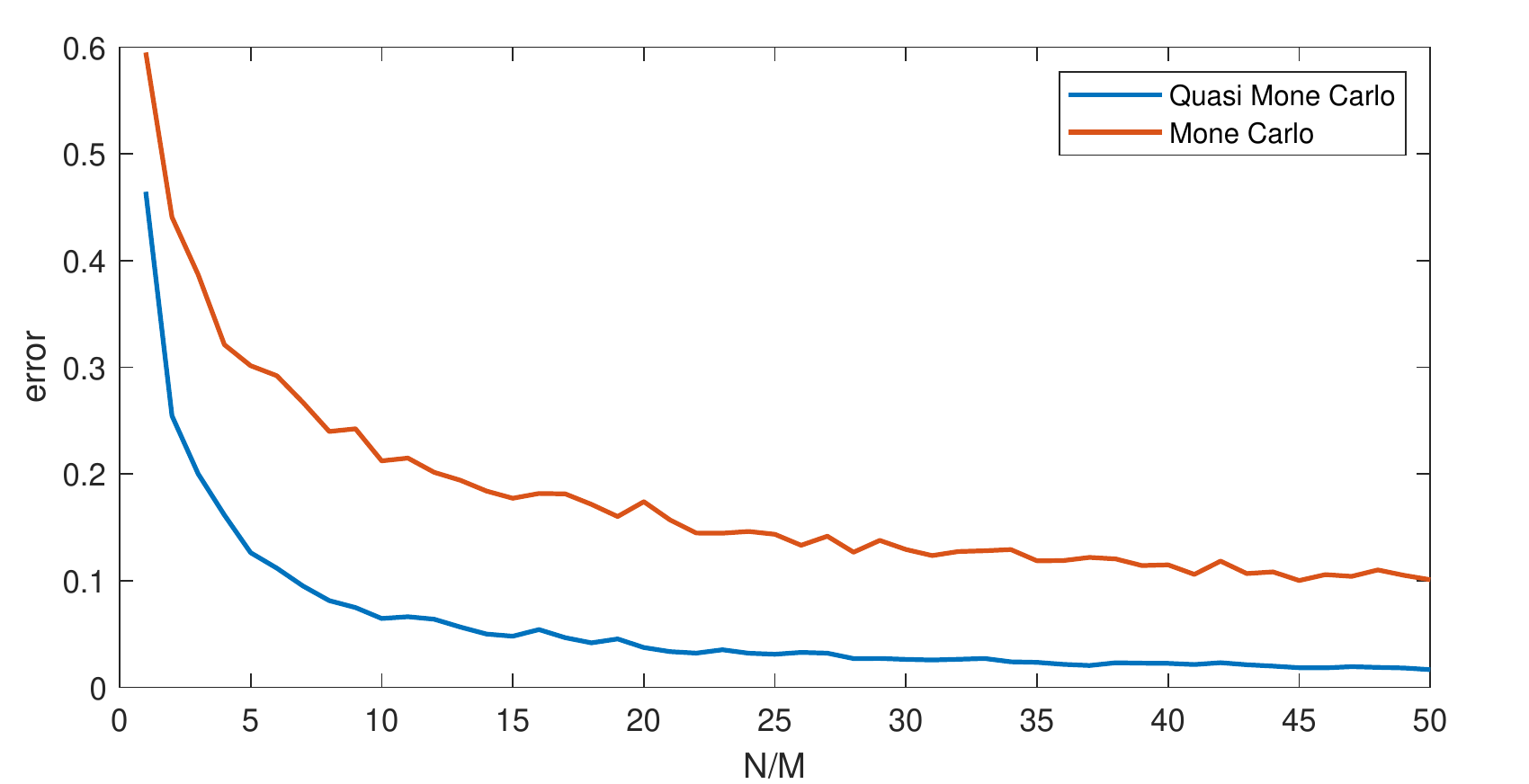}
\caption{\textcolor{black}{Reconstruction error with stochastic LTFT methods. The error is defined as the relative $L_2$ error between the original signal, and the signal after the approximate  analysis-synthesis. Red: error in the Monte Carlo method, averaged over 10 realizations. Blue: error in the QMC method. The $x$ axis is the sampling redundancy $N/M$, where $M$ is the resolution of the signal and $N$ is the number of samples in time-frequency-spread space.}
}
\label{fig:comp}
\end{figure}

\section{Background in time-frequency analysis}

In this section we present the required theoretical background for our approach. We discuss continuous frames, and how to discretize their signal spaces and restrict the coefficient space to a compact sub-domain. We define the general setting of phase space signal processing, and recall the localizing time-frequency transform. %Last, we recall basics facts about quasi-Monte Carlo methods.

\subsection{Continuous frames}

The general integral transforms studied in this paper are based on continuous frames. The following definitions and claims are from \cite{Cframe1} and \cite[Chapter 2.2]{Fuhr_wavelet}, with notation adapted from the latter.

\begin{definition}
\label{CSSframe}
Let $\cH$ be a Hilbert space, and $(G,\mathcal{B},\mu)$ a locally compact topological space with $\sigma$-finite Borel measure $\mu$.
 Let $f:G\rightarrow \cH$ be a weakly measurable mapping, namely for every $s\in\cH$
\[g\mapsto \ip{s}{f_g}\]
is a measurable function $G\rightarrow\CC$.
For any $s\in\cH$, we define the \emph{coefficient function}
\begin{equation}
V_f[s]:G\rightarrow \CC \quad , \quad V_f[s](g)=\ip{s}{f_g}_{\cH}.
\label{eq:CSS2frame}
\end{equation}
\begin{enumerate}
	\item
	We call $f$ a \emph{continuous frame}, if $V_f[s]\in L^2(G)$ for every $s\in\cH$, and there exist constants $0<A\leq B<\infty$ such that
	\begin{equation}
	A\norm{s}_{\cH}^2 \leq \norm{V_f[s]}_{L^2(G)}^2 \leq B\norm{s}_{\cH}^2
	\label{eq:FB}
	\end{equation}
	for every $s\in\cH$.
	%\item
	%If it is possible to choose $A=B$, $f$ is called a \emph{tight frame}.
	\item
	We call $\cH$ the \emph{signal space},
	 $G$ \emph{phase space}, $V_f$ the \emph{analysis operator}, and $V_f^*$ the \emph{synthesis operator}.
%	\item
	%We call the frame $f$ \emph{bounded}, if there exist a constant $0<C\in\RR$ such that
%\[\forall g\in G\ , \norm{f_g}_{\cH}\leq C.\]
\item
We call $S_f=V_f^*V_f$ the \emph{frame operator}. %, and call $Q_f=V_f V_f^*$ the \emph{Gramian operator}.
\item
	We call $f$ a \emph{Parseval continuous frame}, if $V_f$ is an isometry between $\cH$ and $L^2(G)$.
\end{enumerate}
\end{definition}

The synthesis operator of a continuous frame can be computed
 by the weak integral  \cite[Theorem  2.6]{Cframe1}
\begin{equation}
V_f^*(F) = \int^{\rm w}_G F(g)f_g dg.
\label{eq:inver_proj}
\end{equation}
%\textcolor{black}{
This integral is defined by
\begin{equation}
\ip{q}{\int^{\rm w}_G F(g)f_gdg} = \int_G \overline{F(g)}\ip{q}{f_g}dg,
\label{eq:a2}
\end{equation}
%}
where $\int^{\rm w}_G F(g)f_gdg$ denotes the vector corresponding to the continuous functional defined in the right-hand-side of (\ref{eq:a2}), whose existence is guaranteed by the Riesz representation theorem.  Such integrals are called \emph{weak vector integrals}, or Pettis integral \cite{Weak_Integral}.

The \emph{\textcolor{black}{canonical} dual frame} \cite{Cframe1} is defined to be $\tilde{f}_g=S_f^{-1}f_g$. We have the reconstruction formula $V_{\tilde{f}}^*V_f = V_f^* V_{\tilde{f}}=I.$

\subsection{Transforms associated with time-frequency analysis}
\label{Transforms associated with time-frequency analysis}

Before we recall the  well-known STFT and CWT, we first present transforms on which they are based.
We formulate translation, modulation, and dilation, and give their formulas in the frequency domain.
\begin{definition}
\label{def:TMD}
Translation by $x$ of a signal $s:\RR\rightarrow\CC$ is defined by
\begin{equation}
[\mathcal{T}(x)s](t)= s(t-x).
\label{eq:trans00}
\end{equation}
Modulation by $\w$ of a signal $s:\RR\rightarrow\CC$ is defined by
\begin{equation}
[\mathcal{M}(\w)s](t)= s(t)e^{2\pi i \w t}.
\label{eq:trans001}
\end{equation}
Dilation by $\tau$ of a signal $s:\RR\rightarrow\CC$ is defined by
\begin{equation}
[\mathcal{D}(\tau)s](t)= \tau^{-1/2}s(\tau^{-1} t).
\label{eq:trans002}
\end{equation}
\end{definition}
Let $\cF: L^2(\RR)\rightarrow L^2(\RR)$ denote the Fourier transform, \textcolor{black}{with the normalization
\[\cF s(\w) = \int_{\RR}s(t)e^{-2\pi i \w t} dt.\]
} In (\ref{eq:trans00}), the dilation parameter $\tau^{-1}$ is interpreted as a frequency multiplier by $\tau$. Indeed, if $\hs$ is concentrated about frequency $z_0$, then $\cF[\mathcal{D}(\tau^{-1})s]$ is concentrated about frequency $\tau z_0$, as is shown in the following lemma. The proof of the following lemma is direct (see for example \cite[Sections 1.2 and 10]{Time_freq}).

%\textcolor{black}{Put a reference, please!}
\begin{lemma}
\label{Transform_lemma}
Translation, modulation, and dilation are unitary operators in $L^2(\RR)$ and take the following form in the frequency domain.
\begin{enumerate}
\item
$\cF\mathcal{T}(x)\cF^* =\mathcal{M}(-x)$.
\item
$\cF \mathcal{M}(\w)\cF^*= \mathcal{T}(\w)$.
	\item 
	$\cF \mathcal{D}(\tau)\cF^*=\mathcal{D}(\tau^{-1})$.
\end{enumerate}
\end{lemma}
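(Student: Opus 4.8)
The plan is to verify the three Fourier conjugation identities by direct computation, using the normalization $\cF s(\w) = \int_{\RR} s(t) e^{-2\pi i \w t}\, dt$ fixed in the excerpt. First I would record that unitarity of $\mathcal{T}(x)$, $\mathcal{M}(\w)$, and $\mathcal{D}(\tau)$ follows from the change of variables $t\mapsto t-x$, from $\abs{e^{2\pi i \w t}}=1$, and from $t\mapsto \tau^{-1}t$ (the factor $\tau^{-1/2}$ being exactly what makes $\mathcal{D}(\tau)$ an isometry), together with the fact that each map is clearly invertible with inverse $\mathcal{T}(-x)$, $\mathcal{M}(-\w)$, $\mathcal{D}(\tau^{-1})$ respectively. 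Since $\cF$ itself is unitary (Plancherel), the conjugated operators on the right-hand sides are automatically unitary as well, so the content is really the three algebraic formulas.

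For item 1, I would compute $\cF[\mathcal{T}(x)s](\w) = \int_{\RR} s(t-x) e^{-2\pi i \w t}\, dt$ and substitute $u = t-x$ to get $e^{-2\pi i \w x}\int_{\RR} s(u) e^{-2\pi i \w u}\, du = e^{-2\pi i \w x}\,\cF s(\w) = [\mathcal{M}(-x)\cF s](\w)$; rearranging gives $\cF \mathcal{T}(x) = \mathcal{M}(-x)\cF$, i.e. $\cF \mathcal{T}(x)\cF^* = \mathcal{M}(-x)$. For item 2, $\cF[\mathcal{M}(\w)s](\nu) = \int_{\RR} s(t) e^{2\pi i \w t} e^{-2\pi i \nu t}\, dt = \int_{\RR} s(t) e^{-2\pi i (\nu - \w) t}\, dt = \cF s(\nu - \w) = [\mathcal{T}(\w)\cF s](\nu)$, giving $\cF \mathcal{M}(\w)\cF^* = \mathcal{T}(\w)$. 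Alternatively item 2 follows from item 1 by conjugating with $\cF$ again and using $\cF^2 = P$ (the parity operator) together with $P\mathcal{M}(-x)P = \mathcal{M}(x)$ and $P\mathcal{T}(x)P=\mathcal{T}(-x)$, but the direct computation is cleaner. For item 3, $\cF[\mathcal{D}(\tau)s](\w) = \tau^{-1/2}\int_{\RR} s(\tau^{-1}t) e^{-2\pi i \w t}\, dt$; substituting $u = \tau^{-1} t$, so $t = \tau u$ and $dt = \tau\, du$ (assuming $\tau>0$, as is implicit in the dilation definition), yields $\tau^{-1/2}\cdot \tau \int_{\RR} s(u) e^{-2\pi i \w \tau u}\, du = \tau^{1/2}\,\cF s(\tau \w) = \tau^{1/2} (\cF s)((\tau^{-1})^{-1}\w)$, which is exactly $[\mathcal{D}(\tau^{-1})\cF s](\w)$ since $\mathcal{D}(\tau^{-1})g(\w) = (\tau^{-1})^{-1/2} g(\tau\w) = \tau^{1/2} g(\tau\w)$. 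Hence $\cF \mathcal{D}(\tau)\cF^* = \mathcal{D}(\tau^{-1})$.

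There is no real obstacle here — the statement is a standard compilation of elementary Fourier identities, and the only points requiring a modicum of care are bookkeeping of signs in the exponents (which depends on the chosen normalization of $\cF$) and correctly tracking the power of $\tau$ in item 3 so that the $\tau^{-1/2}$ normalization and the Jacobian combine to $\tau^{+1/2}$, matching the normalization built into $\mathcal{D}(\tau^{-1})$. I would also remark that it suffices to verify these identities on a dense subspace such as the Schwartz space $\mathcal{S}(\RR)$, where all the integrals converge absolutely and the substitutions are manifestly justified, and then extend to all of $L^2(\RR)$ by the density argument and the boundedness (indeed unitarity) of all operators involved.
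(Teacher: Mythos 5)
Your computations are correct and complete: the sign bookkeeping in items 1 and 2 and the $\tau^{-1/2}\cdot\tau=\tau^{1/2}$ accounting in item 3 all check out against the paper's normalization $\cF s(\w)=\int s(t)e^{-2\pi i\w t}\,dt$, and the unitarity and density remarks are the standard justifications. The paper itself omits the proof, stating only that it is direct and citing a textbook, and your argument is precisely that direct verification, so there is nothing to compare beyond noting agreement.
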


\subsection{The wavelet and the short time Fourier transforms}

Two prominent transforms where phase space is interpreted as the time-frequency plane are the STFT and the CWT. In these two examples, $\cH=L_2(\RR)$ is the space of time signals, and $G=\RR^2$ with the standard Euclidean measure is the time-frequency plane. % To construct the atoms of these transforms, for each $a\in\RR$ consider the translation operator $\pi_T(a)$ that operate on functions $f$ by $\pi_T(a)f(x)=f(x-a)$. Consider the modulation $\pi_M(b)$ that operates by $\pi_M(b)f(x)=e^{i\pi b x}f(x)$, and the dilation $\pi_D(c)$ defined by $\pi_D(c)f(x) = c^{0.5}f(cx)$. 
The atoms of the STFT are defined as
\[f_{a,b} =\mathcal{T}(a)\mathcal{M}(b)f\]
where $f\in L^2(\RR)$ is some function, called the \emph{window}, which is localized in time and frequency about $0$. The STFT system is a Parseval frame if $\norm{f}_{\cH}=1$. % Phase space is $\RR^2$, and is also called the time-frequency plane, since $a$ determines the time of the atoms and $b$ its frequency. The STFT is defined to be $V_f$ of (\ref{eq:V_fs}), the synthesis transform is taken to be $V_h^*$ for any other window $h$, and typically we choose $h=f$. For any choice of $h$ we have $V_h^*V_f[s]=s$ for every $s\in L^2(\RR)$.

The atoms of the CWT are defined as
\[f_{a,b} =\mathcal{T}(a)\mathcal{D}(b^{-1})f\]
where $f\in L^2(\RR)$ is some function, called the \emph{mother wavelet}, which is localized in time about $0$ and in frequency about $1$, with 
\begin{equation}
\int \w\abs{\hf(\w)}^2d\w <\infty.
\label{eq:addmiss}
\end{equation}
 Phase space $\RR\times(\RR\setminus\{0\})$ is also called in this case the time-frequency plane, since the first component determines the time, and the second component determines the frequency of the atom. The CWT is a Parseval frame if (\ref{eq:addmiss}) is equal to $1$. For a compactly supported mother wavelet, the time supports of the CWT atoms are inverse proportional to the frequency of the atoms.

\subsection{Signal processing in phase space}
\label{Signal processing in phase space}

In this paper we consider signal processing methods where first $V_f[s](g)$ is computed for every $g\in G$, followed by a nonlinearity $\k\Big(V_f[s](g),g\Big)$, where $\kappa:\CC\times G\rightarrow\CC$. Then, each $g\in G$ is mapped to $\rho(g)$ where $\rho:G\rightarrow G$ is integrable, and the resulting atoms are synthesized to the output signal
\[s_{\rm out} = \iint_G \k\Big(V_f[s](g),g\Big)f_{\rho(g)} dg.\]

All of the phase space signal processing examples in the introduction are of this form (multipliers, signal denoising e.g. wavelet shrinkage denoising, and phase vocoder with integer dilation).
In multipliers $\k(c,g)=cr(g)$ for every $c\in\CC$ and $g\in G$, where $r:G\rightarrow\CC$ is some function, and $\rho(g)=g$ \textcolor{black}{\cite{ex1,ex2,wave_mult0,New_mult0,New_mult1,New_mult2}}. In signal denoising $\k(c,g)=\k(c)$ depends only on $c$, and $\rho(g)=g$ \textcolor{black}{\cite{ex5,ex6,ex7}}. 

A \emph{time stretching phase vocoder} is an audio effect that slows down an audio signal without dilating its frequency content. In the classical definition, $G$ is the time frequency plane, and $V_f$ is the STFT. When the signal is dilated by an integer $D$, we consider the diffeomorphism operator
\[\rho(g_1,g_2)=(D g_1,g_2).\]
We consider the nonlinearity $\k(c,g)=\k(c)$, defined by $\k(e^{i\theta}a)=e^{iD\theta}a$, for $a,\theta\in\RR_+$. It is evident from this description that the signal is time-dilated by dilating the position of the time-frequency atoms, without dilating their frequency. The intensities of the atoms are retained, but their phases are modified so that the oscillations of neighboring atoms have compatible phases, avoiding destructive interference (See for example \cite{vocoder_book} or \cite{Ours1} for the explanation of the phase correction nonlinearity $\k$).

\subsection{The localizing time-frequency transform}

The localizing time-frequency transform (LTFT) is a combination of the STFT for low and high frequencies, and CWT atoms for middle frequencies. As a result, the time spread of the atoms is bounded from above by the time spread of the low frequency STFT window, and becomes shorter the higher the frequency. A third parameter in the phase space of the LTFT controls the number of oscillations in the mother wavelet. We consider real valued time signals $s$, and since such signals are uniquely determined by the positive side of the frequency domain, we may assume that $\hs$ is supported in $(0,\infty)$ without loss of generality.

%Let $f$ be a windows localized in time and frequency about $0$. We typically assume that $f$ is supported on $(-S,S)$ for some $S>0$, or has time variance $S$. 

%\subsection{The continuous LTFT}
\textcolor{black}{
Before defining the LTFT, in the following definition we formalize geometric characteristics of time-frequency atoms.
\begin{definition}
Let $q\in L^2(\RR)$. 
\begin{itemize}
    \item 
    The \emph{time-expected value} and the \emph{frequency-expected value} of $q$ are defined respectively as
\[e^{\rm T}_q = \int_{\RR}t\abs{q(t)}^2dt , \quad e^{\rm F}_q = \int_{\RR}\w\abs{\hat{q}(\w)}^2d\w,\]
whenever these integrals are finite.
 The function $q$ is said to be \emph{centered about $x$ in time} if $e^{\rm T}_q=x$, and \emph{centered about $\w$ in frequency} if $e^{\rm F}_q=\w$.
 \item
 If $q$ is supported on the interval $(t_1,t_2)$ and centered about $\kappa$ in  frequency,  the \emph{number of oscillations} in $\mathcal{M}(\w) q$ is defined to be $(\kappa+\abs{\w})(t_2-t_1)$.
\end{itemize}
\end{definition}}

The following definition is taken from \cite{Ours1,Ours2}, with a modified parameterization of the time-frequency space.

\begin{definition}[The localizing time-frequency continuous frame]
\label{def:LTFT}
The Localizing Time-Frequency Transform (LTFT) is based on the following parameters
\begin{itemize}
\item
\emph{A window function} $f\in L^2(\RR)$ supported on $(-\frac{1}{2},\frac{1}{2})$, and localized both in time and frequency about 0.
\item
The \emph{LTFT phase space} $G=\RR^2\times[0,1] = (time\times frequency\times oscillations)$ with the usual Lebesgue measure. 
\item
The \emph{LTFT-CWT transition frequencies} $0<b_0<b_1\in\RR$.
\item
The \emph{minimal number of wavelet oscillations} $\gamma$.
\item
The \emph{oscillation range} $\xi>0$.
\end{itemize}
The atoms of the LTFT are defined for every $(a,b,c)\in G$ by
\begin{equation}
f_{a,b,c}(x)  = [\tau(a,b,c)f](x)
= \left\{
\begin{array}{ccc}
	\sqrt{\frac{b_0}{\gamma}}e^{2i\pi(\frac{\xi}{\gamma} c b_0+ b)(x-a)}f(\frac{b_0}{\gamma}(x-a)) & {\rm if} & b<b_0 \\
	  \sqrt{\frac{b}{\gamma}}e^{2i\pi(\frac{\xi}{\gamma} c + 1)b(x-a)}f(\frac{b}{\gamma}(x-a)) & {\rm if} & b_0<b<b_1 \\
	\sqrt{\frac{b_1}{\gamma}}e^{2i\pi(\frac{\xi}{\gamma} c b_1+ b)(x-a)}f(\frac{b_1}{\gamma}(x-a))  & {\rm if} &  b>b_1
\end{array}
\right.
\label{eq:LTFT_atom}
\end{equation}
where the unitary operator $\tau(a,b,c):L^2(\RR)\rightarrow L^2(\RR)$ is defined for any $(a,b,c)\in G$ by (\ref{eq:LTFT_atom}).
\textcolor{black}{The \emph{maximal atom support} is defined as $S_0=\frac{\gamma}{b_0}$ and the \emph{minimal atom support} as $S_1=\frac{\gamma}{b_1}$.}
\end{definition}
\textcolor{black}{The LTFT atom $f_{a,b,c}$ is localized about the time $a$, and about the frequency $(\frac{\xi}{\gamma} c b_0+ b)$,  $(\frac{\xi}{\gamma} c + 1)$, or $(\frac{\xi}{\gamma} c b_1+ b)$, whenever $b<b_0$, $b_0<b<b_1$, or $b>b_1$,  respectively.}
We call the first and last case of (\ref{eq:LTFT_atom}) STFT atoms, and the second case CWT atoms. Note that the time support of each CWT atom $f_{a,b,c}$ is of length $\frac{\gamma}{b}$.
The number of oscillations in the CWT atom $f_{a,b,c}$ is $\gamma+\xi c$, independently of $b$.  The transform $\tau(a,b,c)$ of the LTFT are given explicitly  in the following lemma.
\begin{lemma}[Transforms of the LTFT]
\begin{equation}
 \tau(a,b,c)
= \left\{
\begin{array}{ccc}
	\mathcal{T}(a)\mathcal{M}(\frac{\xi}{\gamma}c b_0+b)\mathcal{D}(\frac{\gamma}{b_0}) & {\rm if} & b<b_0 \\
	  \mathcal{T}(a)\mathcal{M}\big((\frac{\xi}{\gamma}c +1)b\big)\mathcal{D}(\frac{\gamma}{b}) & {\rm if} & b_0<b<b_1 \\
\mathcal{T}(a)\mathcal{M}(\frac{\xi}{\gamma}c b_1+b)\mathcal{D}(\frac{\gamma}{b_1})  & {\rm if} &  b>b_1
\end{array}
\right.
\label{eq:LTFT_atomOP}
\end{equation}
\end{lemma}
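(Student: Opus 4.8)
The plan is to prove the factorization by a direct computation: apply the claimed composition of translation, modulation, and dilation to an arbitrary $s\in L^2(\RR)$, and check that the result agrees, in each of the three frequency regimes, with the formula obtained by substituting $s$ for $f$ in (\ref{eq:LTFT_atom}). Since $\tau(a,b,c)$ is by definition the operator whose action is given by that formula, this yields the operator identity (\ref{eq:LTFT_atomOP}) on all of $L^2(\RR)$. Throughout I will use that the three elementary operators are applied right to left, and recall from Definition \ref{def:TMD} that $[\mathcal{D}(\tau)s](x)=\tau^{-1/2}s(\tau^{-1}x)$, $[\mathcal{M}(\w)s](x)=s(x)e^{2\pi i\w x}$, and $[\mathcal{T}(a)s](x)=s(x-a)$.

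Take first the middle regime $b_0<b<b_1$, where the claim is $\tau(a,b,c)=\mathcal{T}(a)\mathcal{M}\big((\tfrac{\xi}{\gamma}c+1)b\big)\mathcal{D}(\tfrac{\gamma}{b})$. I would apply the three operators to $s$ in turn: dilation by $\tfrac{\gamma}{b}$ produces $\sqrt{\tfrac{b}{\gamma}}\,s\big(\tfrac{b}{\gamma}x\big)$, since $(\gamma/b)^{-1/2}=\sqrt{b/\gamma}$ and $(\gamma/b)^{-1}x=\tfrac{b}{\gamma}x$; modulation by $(\tfrac{\xi}{\gamma}c+1)b$ multiplies this by $e^{2\pi i(\frac{\xi}{\gamma}c+1)bx}$; and translation by $a$ replaces $x$ by $x-a$. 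The outcome is
\[
\sqrt{\tfrac{b}{\gamma}}\,e^{2\pi i(\frac{\xi}{\gamma}c+1)b(x-a)}\,s\big(\tfrac{b}{\gamma}(x-a)\big),
\]
which is precisely the $b_0<b<b_1$ branch of (\ref{eq:LTFT_atom}) with $f$ replaced by $s$. The remaining two regimes are handled by the same three lines: for $b<b_0$ one dilates by $\tfrac{\gamma}{b_0}$ (giving prefactor $\sqrt{b_0/\gamma}$ and argument scaling $\tfrac{b_0}{\gamma}$) and modulates by $\tfrac{\xi}{\gamma}cb_0+b$, matching the first branch; for $b>b_1$ the identical computation with $b_0$ replaced by $b_1$ matches the third branch.

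Finally, to confirm the unitarity already asserted in Definition \ref{def:LTFT}, I would note that $\mathcal{T}(a)$, $\mathcal{M}(\w)$, and $\mathcal{D}(\tau)$ are unitary on $L^2(\RR)$ by Lemma \ref{Transform_lemma}, so each $\tau(a,b,c)$ is unitary as a composition of unitaries. I do not anticipate any real obstacle in this lemma: it is a bookkeeping statement recording the factorization of the LTFT atoms that was left implicit in (\ref{eq:LTFT_atom}), and the only points needing care are the right-to-left order of composition and the $L^2$-normalization $\tau^{-1/2}$ in the dilation, both of which are fixed by Definition \ref{def:TMD}.
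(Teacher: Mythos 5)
Your proof is correct and is exactly the direct verification the paper leaves implicit (the lemma is stated without proof there): apply $\mathcal{D}$, then $\mathcal{M}$, then $\mathcal{T}$ per Definition \ref{def:TMD} and match each branch of (\ref{eq:LTFT_atom}). The computations in all three regimes check out, and the unitarity remark via Lemma \ref{Transform_lemma} is a harmless addition.
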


\begin{example}[LTFT phase vocoder \cite{Ours1,Ours2}]
\label{LTFT phase vocoder cont}
The continuous LTFT phase vocoder is defined for $s\in L^2(\RR)$ by
\begin{equation}
s_{\rm out} = \iint_{G} \k\big(V_f[s](a,b,c)\big)f_{(D a,b,c)} dadbdc,
\label{eq:Dvocoder_cont}
\end{equation}
where $D$ is the dilation constant, and $\k(e^{i\theta}r)=e^{iD\theta}r$ for $\theta,r\in\RR_+$.
\end{example}

%\subsection{The discrete LTFT}

\section{Quasi-Monte Carlo signal processing in phase space}

In this section we describe the proposed Quasi-Monte Carlo signal processing in phase space.
We start by recalling the general Quasi-Monte Carlo method. We then introducing the general setting of quasi-Monte Carlo signal processing in phase space. We motivate the QMC method over standard methods in time-frequency analysis, and especially, in audio signal processing with phase vocoder. We also discuss the computational complexity of the method. Last, we derive an error analysis of QMC methods in general phase space transforms, and obtain corresponding error bounds for the QMC LTFT.

\subsection{Background: Quasi-Monte Carlo}
\label{General quasi Monte Carlo methods with low discrepancy point sets}

The material in this subsection is taken from \cite{dick_pillichshammer_2010,QMC0}.
Quasi-Monte Carlo (QMC) is a cubature method for approximating integrals.
Given a function $f:I^d\rightarrow\CC$, where $I^d=[0,1]^d$ and $d\in\NN$, a QMC is an approximations of the form
\begin{equation}
\int_{I^d} f(t)dt \approx \frac{1}{N}\sum_{n=1}^N f(x_n),
\label{eq:QMC_intro}
\end{equation}
where $\mathcal{P}_N=\{x_1,\ldots,x_N\}\subset I^d$ are sample points in $I^d$. The Koksma–Hlawka inequality  estimates the error in (\ref{eq:QMC_intro}) based on the star-discrepancy of $\mathcal{P}_N$ and the Hardy-Krause variation of $f$ \textcolor{black}{\cite[Theorem 5.1]{QMC0}}, as we recall next.

 Discrepancy describes the extent to which sample points can cover volumes. The \emph{star-discrepancy} of $\mathcal{P}_N$ is defined to be
\begin{equation}
D_N^*(\mathcal{P}_N)=\sup_{B\in {\rm Rec}^*}\abs{\frac{\#(B\cap \mathcal{P}_N)}{N}-\mu(B)}
\label{eq:star_disc}
\end{equation}
where $\mu$ is the Lebesgue measure of $\RR^d$, $\#(B\cap \mathcal{P}_N)$ is the number of points of $\mathcal{P}_N$ in $B$, and ${\rm Rec}^*$ is the set of rectangular boxes of the form
\[\prod_{j=1}^d\left.\left[0,u_j\right.\right)\]
with $0<u_j\leq 1$ \textcolor{black}{\cite[Formula (5.3)]{QMC0}}.

To define the Hardy-Krause variation we first recall multi-index notations. A \emph{multi-index} is a vector $\a=(\a_1,\ldots,\a_d)\in\NN_0^d$, where $\NN_0$ are the non-negative integers. For two multi-indices $\a,\b$, we write $\a\leq\b$ if $\a_j\leq\b_j$ for every $j=1,\ldots,d$. For a multi-index $\a$ we define the derivative $\partial_{\a}$ of functions of the variables $(a_1,\ldots,a_d)$ by
\[\partial_{\a}:= \prod_{j=1}^d \frac{\partial^{\a_j}}{\partial_{({a_j}^{\a_j})}}.\]
%We define $(a_1,\ldots,a_d)^{(\alpha_1,\ldots\alpha_d)} =\prod_{j=1}^d a_j^{\alpha_j}$.
Let $\Lambda$ be the set of multi-indices $\a=(\a_1,\ldots,\a_d)$, with $\a_j\in\{0,1\}$ for every $j=1,\ldots,d$.
For every $\a\in\Lambda$ and integrable $g:I^d\rightarrow\CC$, denote by 
\begin{equation}
\iint_{I^d|_{\a}}g(a_1,\ldots,a_d)da^{\a}
\label{eq:Vint1}
\end{equation} 
the integration of $g$ with respect to all of the variables $a_j$ such that $\a_j=1$, and substitution of $1$ in all of the variables $a_j$ such that $\a_j=0$. For example
\[
\iint_{I^3|_{(1,0,1)}}g(a_1,a_2,a_3)da^{(1,0,1)} = \int_0^1\int_0^1 g(a_1,1,a_3) da_1 da_3.
\]
When $g$ is defined in a general rectangle $\prod_{j=1}^d[v_j,u_j]\subset \RR^d$, we substitute the greater edge points $u_j$ in the definition of (\ref{eq:Vint1}) instead of $1$.
The \emph{Hardy-Krause variation} of a smooth enough $f$ is defined to be \textcolor{black}{\cite[Formula (5.8)]{QMC0}}
\begin{equation}
\mathcal{V}(f)=\sum_{\a\in\Lambda} \iint_{I^d|_{\a}}\abs{\partial_{\a}f(a_1,\ldots,a_d)}da^{\a}.
\label{eq:Hardy-Krause}
\end{equation}
%\textcolor{blue}{
%Take (5.8) for HArdy-Krause. Theorem 5.1 for KH theorem. (5.2) for discrepancy, and (5.3) for star discrepancy. from Monte Carlo and quasi-Monte Carlo methods. Russel E. Caflisch. Acta Numerical 1998 }

We can now bound the error in QMC with respect to the star discrepancy and Hardy-Krause variation.
\begin{theorem}[The Koksma–Hlawka inequality \textcolor{black}{\cite[Theorem 5.1]{QMC0}}]
\label{KoksmaHlawka}
For every sequence $\mathcal{P}_N=\{x_1,\ldots, x_N\}$ and smooth enough $f:I^d\rightarrow\CC$ 
\[\abs{\int_{I^d} f(x)dx - \frac{1}{N}\sum_{n=1}^Nf(x_n)} \leq \mathcal{V}(f) D^*_N(\mathcal{P}_N).\]
\end{theorem}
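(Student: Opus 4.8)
The plan is to reduce the $d$-dimensional estimate to the one–dimensional fundamental theorem of calculus applied coordinate by coordinate, and then to recognize the resulting coefficients as the anchored local discrepancies of $\mathcal{P}_N$. Throughout, write $L(f):=\frac{1}{N}\sum_{n=1}^N f(x_n)-\int_{I^d}f(x)\,dx$ for the cubature error. Since $\mu(I^d)=1$, the functional $L$ is linear and annihilates constants; this is the key structural fact.

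First I would establish a representation formula for $f$. In one variable, for smooth $g$ one has $g(x)=g(1)-\int_0^1\mathbf{1}_{[0,t)}(x)\,g'(t)\,dt$, because $\int_0^1\mathbf{1}_{[0,t)}(x)g'(t)\,dt=\int_x^1 g'(t)\,dt=g(1)-g(x)$. Applying this identity successively in each of the $d$ coordinates — in coordinate $j$ one either keeps the evaluation at $1$ (the ``$\a_j=0$'' branch) or differentiates in $t_j$ and integrates against $\mathbf{1}_{[0,t_j)}$ (the ``$\a_j=1$'' branch) — and expanding the product of these two-term choices, one obtains
\begin{equation}
f(x)=\sum_{\a\in\Lambda}(-1)^{\abs{\a}}\iint_{I^d|_{\a}}h_{t,\a}(x)\,\partial_{\a}f(t)\,dt^{\a},
\label{eq:KHrep}
\end{equation}
where $h_{t,\a}(x):=\prod_{j:\,\a_j=1}\mathbf{1}_{[0,t_j)}(x_j)$, $\abs{\a}:=\sum_{j=1}^d\a_j$, and, consistently with the notation of \eqref{eq:Vint1}, $\iint_{I^d|_{\a}}$ denotes integration over the variables $t_j$ with $\a_j=1$ together with substitution of $t_j=1$ for $\a_j=0$. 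This is where I would spend the most care: \eqref{eq:KHrep} should be proved by induction on the number of coordinates already processed, tracking the signs $(-1)^{\abs{\a}}$ and the boundary substitutions, and it requires only that all mixed first–order partials $\partial_{\a}f$, $\a\in\Lambda$, exist and are integrable on $I^d$ — this is what ``smooth enough'' should mean.

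Next I would apply $L$ to \eqref{eq:KHrep}. The term $\a=0$ is the constant $f(1,\dots,1)$ and is killed by $L$. Since each $\partial_{\a}f$ is integrable and $\abs{h_{t,\a}}\le 1$, Fubini lets me interchange $L$ with the $t$-integration, giving $L(f)=\sum_{\a\neq 0}(-1)^{\abs{\a}}\iint_{I^d|_{\a}}\partial_{\a}f(t)\,L(h_{t,\a})\,dt^{\a}$. For fixed $t$ the function $x\mapsto h_{t,\a}(x)$ is the indicator of the anchored box $B_{\a}(t)=\prod_{j:\a_j=1}[0,t_j)\times\prod_{j:\a_j=0}[0,1)$, so $L(h_{t,\a})=\frac{1}{N}\#(\mathcal{P}_N\cap B_{\a}(t))-\mu(B_{\a}(t))$; as $B_{\a}(t)=\prod_{j=1}^d[0,u_j)$ with each $u_j\in(0,1]$ (take $u_j=1$ on the pinned coordinates), it belongs to ${\rm Rec}^*$, hence $\abs{L(h_{t,\a})}\le D_N^*(\mathcal{P}_N)$ by \eqref{eq:star_disc}. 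Taking absolute values and using the triangle inequality,
\begin{align*}
\abs{L(f)}&\le\sum_{\a\in\Lambda,\,\a\neq 0}\iint_{I^d|_{\a}}\abs{\partial_{\a}f(t)}\,\abs{L(h_{t,\a})}\,dt^{\a}\\
&\le D_N^*(\mathcal{P}_N)\sum_{\a\in\Lambda,\,\a\neq 0}\iint_{I^d|_{\a}}\abs{\partial_{\a}f(t)}\,dt^{\a}\ \le\ D_N^*(\mathcal{P}_N)\,\mathcal{V}(f),
\end{align*}
the last inequality because the sum defining $\mathcal{V}(f)$ in \eqref{eq:Hardy-Krause} additionally contains the nonnegative term $\abs{f(1,\dots,1)}$ coming from $\a=0$.

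The routine parts — the one–dimensional identity, the use of Fubini, and the final triangle-inequality estimate — are straightforward. The main obstacle, as noted, is the bookkeeping in \eqref{eq:KHrep}: verifying that iterating the scalar identity really produces exactly this signed sum over $\Lambda$ with the correct faces $I^d|_{\a}$, and checking that every local-discrepancy function appearing in $L(f)$ — including those with several coordinates pinned at $1$, which correspond to lower–dimensional faces of the cube — is genuinely dominated by the star-discrepancy as defined through ${\rm Rec}^*$. A minor point worth recording is that the argument is identical for complex-valued $f$, since only $\abs{\cdot}$ and the triangle inequality in $\CC$ are used.
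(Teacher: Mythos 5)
Your proof is correct: it is the standard argument via the Hlawka–Zaremba representation formula (iterated one-dimensional fundamental theorem of calculus anchored at $1$), followed by recognizing $L(h_{t,\a})$ as the local discrepancy of an anchored box in ${\rm Rec}^*$ and applying the triangle inequality; the sign bookkeeping, the Fubini interchange, and the observation that the paper's $\mathcal{V}(f)$ contains the extra nonnegative term $\abs{f(1,\ldots,1)}$ are all handled properly. The paper itself does not prove this theorem but cites it, and your argument is essentially the proof given in the cited reference, so there is nothing to reconcile.
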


As evident from the Koksma–Hlawka inequality, one way to guarantee a low error in the QMC method is to construct a sampling set $\mathcal{P}_N$ with low star discrepancy. There are two types of constructions of such sample sets. 
The \emph{closed} type, or \emph{low discrepancy sequence}, considers an infinite sequence of points $\mathcal{P}=\{x_n\}_{n=1}^{\infty}$, and for each $N\in\NN$, $\mathcal{P}_N$ is defined to be the first $N$ points of $\mathcal{P}$, $\{x_n\}_{n=1}^{N}$. There are constructions of low discrepancy sequences with
\[D^*_N(\mathcal{P}_N) \leq C \frac{(\ln N)^d}{N},\]
where $C$ is some constant (for example Halton sequence \cite{HaltonS}).
The \emph{closed} type, or \emph{low discrepancy point set}, considers a sequence of sample sets $\mathcal{P}_N$, where $\mathcal{P}_N$ is a completely different set for each $N\in\NN$. There are constructions of low discrepancy point sets with
\[D^*_N(\mathcal{P}_N) \leq C \frac{(\ln N)^{d-1}}{N},\]
where $C$ is some constant (for example, Hammersley point set \cite{HammersleyS}). Low discrepancy point sets achieve lower QMC asymptotic error than low discrepancy sequences. However, low discrepancy sequences allow improving an existing approximation of an integral by adding new sample points to the current cubature sum, instead of computing a whole new cubature some when $N$ is increased.

\subsection{General setting of QMC time-frequency analysis}

In \cite{Ours1} the class of \emph{linear volume discretizable frames} was introduces, which can be explained in simple words as follows. Let $f$ be a continuous frame on the signal space $\cH$ and phase space $G$. Let $\cS \subset\cH$ be a class of signal. A discretization of $\cS $ is a sequence of subspaces $\cS_M\subset \cH$, $M\in\NN$, of dimension $M$ each. \textcolor{black}{We also call $M$ the \emph{resolution} of $\cS_M$.} \textcolor{black}{The spaces $\cS_M$ are chosen to satisfy the following approximation property: for every $s\in \cS $ and $\epsilon>0$, there exists $M_0\in\NN$ such that for every $M>M_0$ there exists $s_M\in \cS_M$ satisfying $\norm{s-s_M}<\epsilon$.} For some transforms, like the STFT \cite{Ours1}, CWT and LTFT \cite{Ours2}, most of the energy of $V_f[s_M]$ of signals $s_M\in \cS_M$ is concentrated about a domain $G_M\subset G$ of volume $O(M)$. Frames that have discretizations satisfying the above property are called \emph{linear volume discretizable}. 
\textcolor{black}{
\begin{definition}[Linear volume discretizable frame \cite{Ours1}]
\label{D:linear area discretizable}
Let $f:G\rightarrow\cH$ be a continuous frame.
Let $\cS \subset \cH$ be a class of signals, and $\{\cS_M\}_{m=1}^{\infty}$ a discretization of $\cS$. 
	The continuous frame $f$ is called \emph{linear volume discretizable} with respect to the class $\cS$ and the discretization $\{\cS_M\}_{M=1}^{\infty}$, if for every error tolerance $\epsilon>0$ there is a constant $C^{\epsilon}>0$ and $M_0\in\NN$, such that for any $M\geq M_0$ there is a subset $G_M\subset G$ with measure
\begin{equation}
\mu(G_M) \leq C^{\epsilon}{\rm dim}(\cS_M)
\label{eq:lin_area1}
\end{equation}
such that for any $s_M\in  \cS_M$,
\begin{equation}
\frac{\norm{V_f[s_M] - \chi_{G_M} V_f[s_M]}_2}{\norm{V_f[s_M]}_2} < \epsilon,
\label{eq:lin_area2}
\end{equation}
where $\chi_{G_M}$ is the characteristic function of the set $G_M$.
\end{definition}
}

Motivated by this discussion,  we consider a continuous frame $f$, restricted to some discrete signal space $\cS_M$ of dimension/resolution $M$, and restricted to the rectangular domain $G_M$ in phase space of measure $O(M)$.
A sampling discretization of a phase space transform $V_f$ is the restriction of the analysis operator $V_f[s]$ to a finite sample set $\mathcal{P}_N=(g_1,\ldots,g_N)\subset G_M$,
and approximation of the synthesis operator \textcolor{black}{$V_f^*:L^2(G)\rightarrow \cH$ by 
\begin{equation}
V_f^{M,N*}:\CC^{\mathcal{P}_N}\rightarrow \cH, \quad V^{M,N*}_f\big(F(g_n)\big)_n = \frac{\mu(G_M)}{N}\sum_{n=1}^N F(g_n)f_{g_n},
\label{eq:VfN_star}
\end{equation}
where  $\CC^{\mathcal{P}_N}$ is the set of functions ${\mathcal{P}_N}\rightarrow\CC$.} %We consider in this paper Euclidean rectangular phase spaces. We note}  

A quasi-Monte Carlo discretization of a phase space signal processing method is given by
\begin{equation}
s^N_{\rm out} = \frac{\mu(G_M)}{N}\sum_{n=1}^N \k\Big(V_f[s](g_n),g_n\Big)f_{\rho(g_n)}
\label{eq:QMC_processing}
\end{equation}
where $\k,\rho$ are defined in Subsection \ref{Signal processing in phase space}. We consider in this paper Euclidean rectangular phase spaces $G_M$. The sample set $\mathcal{P}_N$ is taken as the affine linear rescaling of a low discrepancy sequence/point-set in $[0,1]^d$, so it covers $G_M$.

\textcolor{black}{
In Appendix \ref{LTFT discretization} we explain how to discretize the signal space of the LTFT transform. We consider discrete signals represented by $M$ time samples, with a sample-rate of $L$ samples per time unit.  Under this discretization, the LTFT of any discrete signal $s_M$ has most of its energy localized in phase space about the compact domain
\[G_M= [-M/L-S_0,M/L+S_0]\times [0,L]\times [0,1],\]
with $S_0$ from Definition \ref{def:LTFT}.
We hence restrict the phase space of the LTFT to $G_M$, calling the restricted system LTFT$^M$. We denote the synthesis operator of LTFT$^M$ by $V_f^{* M}$, namely,
\[V_f^{* M}F= \iint_{G_M} F(a,b,c)f_{a,b,c} dadbdc.\]
}

\begin{example}
\label{LTFT phase vocoder disc}
By Example \ref{LTFT phase vocoder cont}, the QMC LTFT phase vocoder is defined by
\begin{equation}
s_{\rm out} = \frac{\mu(G_M)}{N}\sum_{n=1}^N \k\big(V_f[s](a_n,b_n,c_n)\big)f_{(D a_n,b_n,c_n)}
\label{eq:Dvocoder}
\end{equation}
where $a_n$ is the time value, $b_n$ the frequency value, and $c_n$ is the oscillation value. Here, $D$ is the dilation constant, and $\k(e^{i\theta}r)=e^{iD\theta}r$ for $\theta,r\in\RR$.
\end{example}

\subsection{Motivation for quasi-Monte Carlo discretization of the time-frequency space}

Generally, we have three requirements for the discrete atom system $\{f_{g_n}\}_{n=1}^N$ in time-frequency signal processing \textcolor{black}{(and also general signal processing in phase space).}
\begin{enumerate}
	\item 
	The time-frequency samples should be well-spread in the time-frequency plane, so we can treat the analysis transform as a feature extraction method of local frequencies.
	\item
	The sampling should allow reconstruction up to some small error.
	\item
	The sample set should be small enough to be computationally efficient.
\end{enumerate}

In the following we focus on time-frequency analysis, % and integer time dilation phase vocoder,
with $M$ the resolution of the discrete signal space. 
%and $D$ is the dilation factor. 
We consider time-frequency analysis based on CWT, STFT and LTFT, and compare the QMC discretization with DWT and discrete STFT. %For the DWT we consider two implementations, a direct time implementation, and an FFT based frequency implementation (see Appendix **). We also compare all of the above methods when including the third axis of the LTFT (or LTFT2). In the DWT and STFT method we sample the third axis with a regular grid, and in the QMC method is sample the 3D phase space with a 3D low discrepancy sequence/point-set. The third axis in the STFT method controls the support of the window.
The second and third requirements are met by the QMC method, as is proved in Section  \ref{Computational complexity of Quasi-Monte Carlo LTFT} and \ref{Analysis of QMC phase space signal processing} for a class of continuous frames that contains the STFT, CWT, and LTFT. The second and third requirements are also met by standard discrete time-frequency methods, like discrete STFT, based on samples on a regular grid, and DWT, based on sample on a wavelet grid, e.g., dyadic samples (see Appendix \ref{A DWT construction}). The third requirement is not satisfied for regular discretizations of the LTFT, since the 3D time-frequency space requires more samples in comparison to 2D time-frequency methods.

To address the first requirement, we need to formalize the notion of well-spread samples. 
For that we consider the discrepancy of the sample set. \textcolor{black}{Suppose that the domain of interest in the time-frequency plane is a rectangle $G_M=[a_1,a_2]\times[b_1,b_2]$ of area $M$, where $M$ is also the resolution of the discrete signal space $\cS_M$. We scale this rectangle to $[0,1]^2$  using the mapping $\psi(a,b)=\big((a_2-a_1)a + a_1,(b_2-b_1)b + b_1\big)$. Namely, the scaling of the frame $f$ is defined as the fame 
\[\tilde{f}_{(a,b)} = \sqrt{M} f_{\psi(a,b)},\]
having the same frame bounds as $f$. Let $\mathcal{P}_N=(g_n)_{n=1}^N\subset G_M$ be the sample set in $G_M$, and $\tilde{\mathcal{P}}_N=(\tilde{g}_n=\psi^{-1}(g_n))_{n=1}^N\subset [0,1]$ the scaled sample set.}
The discrepancy of $\tilde{\mathcal{P}}_N$ is defined to be \textcolor{black}{\cite[Formula (5.2)]{QMC0}}
\begin{equation}
D_N(\tilde{\mathcal{P}}_N)=\sup_{B\in {\rm Rec}}\abs{\frac{\#(B\cap \tilde{\mathcal{P}}_N)}{N}-\mu(B)}
\label{eq:disc}
\end{equation}
with notations as in (\ref{eq:star_disc}), where ${\rm Rec}$ is the set of rectangles of the form
\[\left.\left[u_1,u_2\right.\right)\times \left.\left[v_1,v_2\right.\right) \subset [0,1]^2.\]
The discrepancy (\ref{eq:disc}) is comparable to the star discrepancy (\ref{eq:star_disc}) via
\[D^*_N(\tilde{\mathcal{P}}_N)\leq D_N(\tilde{\mathcal{P}}_N)\leq 2^d D^*_N(\tilde{\mathcal{P}}_N),\]
where in our case of a two-dimensional phase space $d=2$ \cite{dick_pillichshammer_2010}.

The discrepancy is a measure of uniformity, or spread, of sample points. For example, if there is square $Q\in {\rm Rec}$ that does not intersect the sample points, then $D_N(\mathcal{P}_N) \geq \mu(Q)$. The greater the area of the square $Q$, the higher this lower bound of the discrepancy.
In Appendix \ref{Discrepancy of DWT grids} we prove that the discrepancy of the wavelet grid is bounded from below by $\frac{C}{\sqrt{N}}$ for some constant $C$. This is also the discrepancy of the regular grid of the discrete STFT (a well-known result for regular grids \cite{dick_pillichshammer_2010}). Since there are sample sets with discrepancy $C'\frac{\log(N)}{N}$ (for example the Hammersley point set), which is asymptotically lower than $\frac{C}{\sqrt{N}}$, the DWT  and regular grids are not well spread, while the low discrepancy sample set of the QMC is optimally spread in the sense of discrepancy.
When adding the third axis of the LTFT, the discrepancies of the 3D DWT and discrete STFT grids are bounded from below by $\frac{C}{N^{\frac{1}{3}}}$. For comparison, there are QMC 3D sample sets with discrepancy $C'\frac{\log^2(N)}{N}$ (e.g., the Hammersley point set), which is asymptotically lower than $\frac{C}{N^{\frac{1}{3}}}$. In this sense QMC sample sets are better spread in the time-frequency plane than standard discretization methods.

\subsection{Time-frequency coverage with low discrepancy sample sets}

Let us describe a related point of view on the QMC samples, namely, the ``capacity'' of the samples to cover the time-frequency plane.  In principle, no atom can perfectly represent a unique frequency at a unique time. Instead, when measuring time-frequency coefficients via an analysis transform, each atom at time-frequency $(a,b)$ has a large interaction with a domain of time-frequency points in phase space about $(a,b)$. This is the essential domain covered by the time-frequency kernel $V_f(f_{a,b})$ centered at $(a,b)$ (also called the ambiguity function). We thus think of each atom as representing a small domain of times and frequency about $(a,b)$. The Heisenberg uncertainty principle informally states that the area of this domain is never less than some global positive constant, that can be assumed to be $1$ by choosing appropriate units of measurement. One way to represent this domain is by a \emph{Heisenberg box}, which is a rectangle of area 1 centered at $(a,b)$, with sides parallel to the axes. Here, the side along the time direction represents the time spread of the atom, and the side along the frequency direction represents the frequency spread of the atom.  One way to characterize any sample grid of a discrete STFT, is as a grid for which the corresponding Heisenberg boxes tessellate phase space. Namely, if the time spread of an atom is less than its frequency spread, then the grid spacing along time should be smaller than the spacing along frequency \cite{Time_freq}.

In Appendix \ref{Time-frequency tessellation in CWT and LTFT analysis} we present an analogous notion for wavelet analysis. For wavelets, the domain in phase space covered by the time-frequency kernels $V_f(f_{a,b})$ has a funnel shape. Indeed,  since atoms of lower frequencies have higher time supports, the spread of the kernel in the time direction increases the lower the frequency. We thus call this domain a \emph{wavelet funnel} (see Figure \ref{fig:funnel}). We show in Appendix \ref{Time-frequency tessellation in CWT and LTFT analysis} that the wavelet funnels about a low discrepancy sample set cover phase space approximately uniformly. This means that all time-frequency pairs are roughly evenly represented by the sampled atoms in the QMC method.
Note that the wavelet funnels about the DWT grid also cover phase space. However, for the wavelet funnels about a regular grid to cover phase space, the grid must be of $O(M^2)$ samples, which is not practical. % (see Figure **).
Similarly to the wavelet funnels, the 3D equivalent shape for LTFT also admits an approximate uniform cover of 3D phase space via the low discrepancy sample set of QMC (see Appendix \ref{Time-frequency tessellation in CWT and LTFT analysis}).

\begin{figure}[!ht]
\centering
\includegraphics[width=0.25\linewidth]{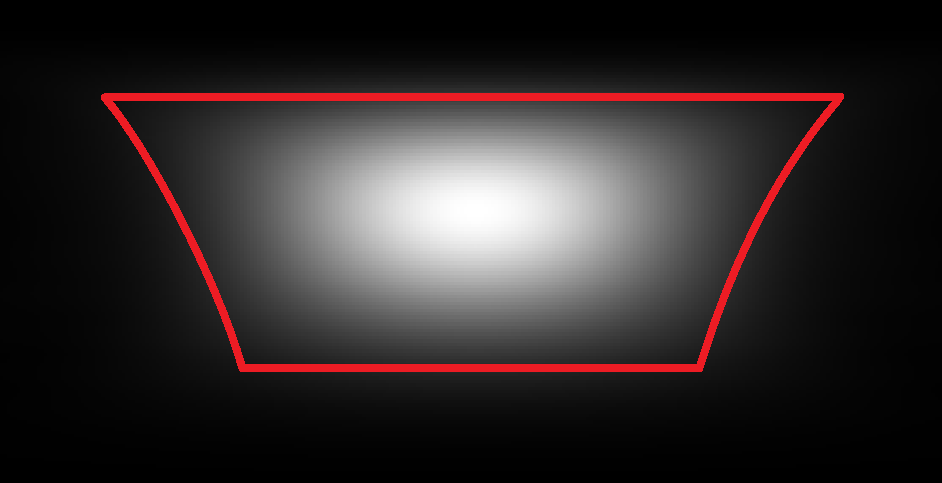}
\caption{The wavelet funnel represents the essential area covered by the wavelet kernel $V_f[f_{a,b}]$.
}
\label{fig:funnel}
\end{figure}

\subsection{Computational complexity of digital quasi-Monte Carlo LTFT}
\label{Computational complexity of Quasi-Monte Carlo LTFT}

In this subsection we estimate the computational complexity of the \textcolor{black}{analysis QMC LTFT transform $\big(V_f[s](g_n)\big)_{n=1}^N$ and synthesis QMC LTFT transforms $V_f^{M,N*}(F)$ of (\ref{eq:VfN_star}).}
\textcolor{black}{We first digitize the QMC LTFT. Let $L>0$ and $M\in\NN$ even. We consider a digital signal processing setting in which time signals are represented by $M$ equidistant sample points in the time interval $[-\frac{M}{2L},\frac{M}{2L}]$. Namely, the digital signal $\mathbf{x}=(x_m)_{m=-M/1}^{M/2}$ represents values at the points $(m/L)_{m=-M/2}^{M/2}$. The inner product between signals is estimated by
\begin{equation}
\label{eq:digitalIP}
\ip{\mathbf{x}}{\mathbf{y}} = \frac{L}{M}\sum_{m=-M/2}^{M/2} x_m\overline{y_m}.
\end{equation}  The Fourier transform is estimated by the discrete Fourier transform, where the $M$ discrete Fourier coefficients of digital signals are seen as points in the frequency interval $[0,L]$. Namely, $\mathbf{X}=(X_k)_{k=0}^M$ represents values at $(k/L)_{k=0}^M$, and
\begin{equation}
    \label{eq:DFT}
    X_k = \frac{L}{M}\sum_{m=-M/2}^{M/2}x_m e^{-2\pi i \frac{km}{M}}.
\end{equation}
When computing the analysis operator of the LTFT (Definition \ref{def:LTFT}), the continuous atoms $f_{a,b,c}$ are digitized by sampling them at $(m/L)_{m=-M/2}^{M/2}$,  and the inner product with  $\mathbf{x}$ is defined via (\ref{eq:digitalIP}).
}

\textcolor{black}{Motivated by (\ref{eq:DFT}), we consider the phase space $G_M=[-\frac{M}{2L},\frac{M}{2L}]\times[0,L]\times [0,1]$. Let $\mathcal{P}_N=\big((a_n,b_n,c_n)\big)_{n=1}^N$ be the $N$  quasi-Monte Carlo samples in $G_M$.}
 We take the transition frequencies of the LTFT (Definition \ref{def:LTFT}) as $b_0=C_1L$  and $b_1=C_2L$, with any choice of constants $C_1<C_2$ that do not depend on $N,M,L$. %We analyze LTFT2, and note that the result carries to LTFT when $b_1=L$.

The size of the time support of the atom $f_{a,b,c}$ is 
\begin{equation}
S(a,b,c)=\left\{
\begin{array}{ccc}
	\gamma/b_0 & {\rm if} & 0\leq b \leq b_0\\
	\gamma/b & {\rm if} & b_0<b<b_1\\
	\gamma/b_1 & {\rm if} & b_1\leq b \leq L .
\end{array}
\right.
\label{eq:support4}
\end{equation}
%
%\textcolor{black}{Make clearer. Count pixels. Inverse of what is typically done in QMC approximation. 
%Say in intro that we USE KH in two different ways. Into - on techniques.
%Into - coceptual improvement of QMC to sig processing. Broad agenda of QMC - instead of grids that are exponential, we can still do determenistic but better than dimensions. Less incremental.}
%
The number of time samples in $f_{a,b,c}$ is estimated by $LS(b)$. \textcolor{black}{In both the analysis and synthesis LTFT transforms, the computational complexity $\mathcal{C}$ entailed by each sampled atom is proportional to the number of time samples in the atom. Hence, our goal is to count the overall number of samples in all atoms in the method, namely,
%
%the overall complexity is estimated by
\begin{equation}
\mathcal{C}=\sum_{n=1}^N LS\Big(\frac{M}{L}\a_n,L \b_n,\c_n\Big).
\label{eq:comp1}
\end{equation}
where $(a_n,b_n,c_n)=(\frac{M}{L}\a_n,L\b_n,\c_n)$. Note that  $\big((\a_n,\b_n,\c_n)\big)_{n=1}^N$ is a low discrepancy sample set in $[0,1]^3$.}

To estimate (\ref{eq:comp1}) we use the Koksma-Hlawka inequality \textcolor{black}{(Theorem \ref{KoksmaHlawka})}  ``backwards,'' approximating \textcolor{black}{the sum (\ref{eq:comp1})  by the integral of $S(\a,\b,\c)$  (\ref{eq:support4}). While computing directly the sum  (\ref{eq:comp1}) is difficult, computing the integral of $S(\a,\b,\c)$ is easy.} 
Hence, by the Koksma-Hlawka inequality, \textcolor{black}{the complexity} $\mathcal{C}$ is estimated by
\begin{equation}
A(N,L):=N\iint_{[0,1]^3} LS\Big(\frac{M}{L}\a,L \b,\c\Big)d\a d\b d\c.
\label{eq:av1}
\end{equation}
up to an error or order
\[E(N,L):=L\mathcal{V}\Big(S\big(\frac{M}{L}\a,L \b,\c\big)\Big)\log^2(N),\]
where $\mathcal{V}(\cdot)$ is the Hardy-Krause variation \textcolor{black}{(\ref{eq:Hardy-Krause})}.
It is easy to see that
\[A(N,L) = \gamma N + \gamma \ln\Big(\frac{b_1}{b_0}\Big)N + \gamma \frac{L-b_1}{b_1}N.\]
For $b_0=C_1L$ and $b_1=C_2L$ we have
\[A(N,L) = O(\gamma N).\]
Moreover, \textcolor{black}{by (\ref{eq:support4}) $S$ depends only on $b$, so by the definition of the Hardy-Krause  variation (\ref{eq:Hardy-Krause})
\[
\begin{split}
  \mathcal{V}\Big(S\big(\frac{M}{L}\a,L \b,\c\big)\Big) & =  \abs{S\big(\frac{M}{L},L,1\big)}
+
\int_0^L\abs{\partial_{\b}(S\big(\frac{M}{L},L \b,1\big)}d\b\\
& = \frac{\gamma}{b_1}
+
\int_{b}\abs{ \partial_b S(\frac{M}{L},b,1)}db = \frac{\gamma}{b_1}
+
\int_{b_0}^{b_1} \frac{\gamma}{b^2}db =\frac{\gamma}{b_0}
\end{split}
\]
}
%\[V\Big(S\big(\frac{M}{L}\a,L \b,\c\big)\Big) = \frac{\gamma}{b_1} -  \int_{b_0/L}^{b_1/L} \frac{\gamma}{L\b^2}  d\b = O(\gamma).\]
%
%\textcolor{blue}{Computation: $Q(\a,\b,\c):=S\big(\frac{M}{L}\a,L \b,\c\big)$. Since $Q$ depends only on $\b$,
%\[V(Q(\a,\b,\c)) = \abs{Q(1,1,1)}
%+
%\int_{\b}\abs{\partial_{\b}Q(1,\b,1)}d\b
%\]
%
%
%so by (\ref{eq:support4})
%\[= \frac{\gamma}{b_1}
%+
%\int_{\b}\abs{L \partial_b S(\frac{M}{L},L\b,1)}d\b
%\]
%\[= \frac{\gamma}{b_1}
%+
%\int_{b}\abs{ \partial_b S(\frac{M}{L},b,1)}db
%\]
%\[= \frac{\gamma}{b_1}
%+
%\int_{b_0}^{b_1} \frac{\gamma}{b^2}db
%\]
%\[= \frac{\gamma}{b_1}
%+
%\frac{\gamma}{b_0}- \frac{\gamma}{b_1} = %\frac{\gamma}{b_0}=O(\gamma)
%\]
%}

Thus, the computational complexity of the QMC LTFT method is
\[\mathcal{C} = O\big(\gamma (N + \log^2(N))\big) = O(N).\]

In Section \ref{Analysis of QMC phase space signal processing} we show that the QMC LTFT method has approximation error $O(\frac{M\log(N)^2}{N})$. Thus,
 if we choose
\[N= A M \log(M)^{2+\e},\]
 the error is
\[\frac{M\log(A M \log(M)^{2+\e})^2}{A M \log(M)^{2+\e}}<\frac{1}{A},\]
and vanishes asymptotically.
%Thus, for large enough $M$, the QMC error is less than
%$A^{-1}$.

%\textcolor{black}{Complexity not ready yet.}

%Hence, for computational complexity
%$M \log(M)^{2+\e}$,
%the QMC has asymptotically vanishing error.
In practice, taking $N=A M$ for relatively small $A$ works well. For example, in integer time dilation phase vocoder with dilation constant $D$, taking  $A=4D$ gives high-quality results. Any value of $A$ greater than $4 D$ does not improve the audible quality of the method. Sound examples and code of QMC LTFT  phase vocoder are available at \url{https://github.com/RonLevie/LTFT-Phase-Vocoder}.

\section{Approximation analysis of QMC phase space signal processing}
\label{Analysis of QMC phase space signal processing}

In this section we present a class of continuous frame transforms that includes the STFT, CWT, LTFT, and systems like the Shearlet \cite{Shearlet} and the Curvelet \cite{Curvelet} transforms. We then analyze the QMC approximation of these transforms, proving that the error rate is of the form $O(\frac{\log(N)^{d-1}M}{N})$ for $N$ QMC samples, discrete signals of resolution $M$, and phase space of dimension $d$. Last, we compute the error rate in the example of the LTFT, and particularly for QMC phase vocoder.

\subsection{General phase space transforms}

We define a setting which generalizes the discretization of the LTFT  of Appendix \ref{LTFT discretization}.  
For $A=a^{c}$, we denote by $\mathcal{R}^{c}(A)$ the square in $\RR^{c}$, centered at $0$ with sides $a>0$, \textcolor{black}{namely, $\mathcal{R}^{c}(A)$ is the square of volume $A$.} We denote by $\mathcal{T}(\mathbf{a})$ the translation by $\mathbf{a}\in\RR^d$ in $L^2(\RR^d)$ defined by $[\mathcal{T}(\mathbf{a})f](x)=f(x-\mathbf{a})$.

\begin{assumption}[Discrete signal space and transform setting]
\label{As2}
$ $
\begin{enumerate}
\item
The continuous signal space is $\cS = L_{2}(\RR^{d_s})\cap L_{\infty}(\RR^{d_s})$, where $d_s\in\NN$.  
	 \item 
	 Let $L\geq 1$ be a constant that we call the \emph{formal frequency support}.
	 For each $m\in\NN$, there is an $M=m^{d_s}$ dimensional discrete space $\cS_M\subset L_{\infty}(\RR^{d_s})\cap L_2(\RR^{d_s})$ of signals supported in $\mathcal{R}^{d_s}(M/L)$.
	 \item
	\label{Delta}
The atom system is given by 
	\[\{f_{(\mathbf{a},\mathbf{b})}:=\mathcal{T}({\bf a})f_{\bf b}\}_{({\bf a}, {\bf b})\in \RR^{d}},\]
	where for each ${\bf b}$, $f_{\bf b}$ is supported on a square of sides less than the global constant $\Delta>0$. Here, ${\bf a}\in\RR^{d_s}$ is the \emph{position parameter} and ${\bf b}\in\RR^{d-d_s}$ is the \emph{formal frequency parameter}.
\item
Define $l=L^{\frac{1}{d_s}}$. For each $m\in\NN$ define $m'=m+\left\lceil \Delta l \right\rceil$ and $M'=(m')^{d_s}$. The compact phase space of dimension $d$ is defined to be
\[G_{M}= \mathcal{R}^{d_s}(M'/L)\times \mathcal{R}^{d-d_s}(L),\]
with the standard Lebesgue measure. 
%Here, $\mathcal{R}^{d_s}(M/L)$  is the \emph{spatial support}.
	\item
	For every $x\in \RR^{d_s}$ the mapping $({\bf a},{\bf b})\mapsto \mathcal{T}({\bf a})f_{\bf b}(x)$ is assumed to be differentiable with respect to any differential operator $\partial_{\a}$, with $\a=(\a_1,\ldots,\a_d)\in \Lambda$ (see Subsection \ref{General quasi Monte Carlo methods with low discrepancy point sets} for $\Lambda$), and for every $\mathbf{a}\in\RR^{d_s}$
	\begin{equation}
	\norm{\partial_{\a}[\pi({\bf a})f_{\bf b}]}_1 \leq H(\a;{\bf b}).
	\label{eq:pi_bound}
	\end{equation}
	Here, $\{H(\a;\cdot)\}_{\a\in\Lambda}$ is a set of integrable functions that depend only on ${\bf b}$ for each $\alpha$, and the integration in the $L_1(\RR^{d_s})$ norm in (\ref{eq:pi_bound}) is with respect to the signal domain variable $x$.
	\item
	 Analysis $V^M_f[s]({\bf a},{\bf b})$ and synthesis $V^{M*}_f[S]$ are defined for $({\bf a},{\bf b})\in G_{M}$ by (\ref{eq:CSS2frame}) and (\ref{eq:inver_proj}), \textcolor{black}{namely,
	 \begin{equation}
	 V^M_f[s]:G_{M}\rightarrow \CC \quad , \quad V^M_f[s](\mathbf{a},\mathbf{b})=\ip{s}{f_{(\mathbf{a},\mathbf{b})}}_{\cH}.
	     \label{eq:M_An}
	 \end{equation}
	 \begin{equation}
	  V^{M*}_f[S] = \int_{G_{M}} S(\mathbf{a},\mathbf{b})f_{(\mathbf{a},\mathbf{b})} d\mathbf{a}d\mathbf{b}.
	     \label{eq:M_Sn}
	 \end{equation}}
\end{enumerate}
\end{assumption}

\textcolor{black}{
\begin{remark}
The domain $G_M$ in phase space of Assumption \ref{As2}.4 is a direct product of the \emph{spatial square}  $\mathcal{R}^{d_s}( M' /L)$ and the \emph{frequency square} $\mathcal{R}^{d-d_s}(L)$. The spatial square $\mathcal{R}^{d_s}( M' /L)$ is derived from the signal domain $\mathcal{R}^{d_s}(M/L)$ by increasing each side of $\mathcal{R}^{d_s}(M/L)$ by $\left\lceil \Delta \right\rceil$. Note that $\left\lceil \Delta \right\rceil$ bounds the support of atoms $f_{(\mathbf{a},\mathbf{b})}$.
By definition, any discrete signal $s_M\in \cS_M$ is supported in $\mathcal{R}^{d_s}(M/L)$. Hence, for any $\mathbf{a}$ in the boundary of $\mathcal{R}^{d_s}( M'/L)$, and any $\mathbf{b}\in \mathcal{R}^{d-d_s}(L)$, the signal $s_M$ and the atom $f_{(\mathbf{a},\mathbf{b})}$ have non-intersecting supports, so 
\[V_f^M[s_M](\mathbf{a},\mathbf{b})
=\ip{s_M}{f_{(\mathbf{a},\mathbf{b})}}=0.\]
%constant $\e_n$ of (\ref{eq:epsM0}) in Assumption \ref{As2}
\end{remark} 
}

\textcolor{black}{
In Subsection \ref{Quasi-Monte Carlo LTFT error} we present a setting that satisfies Assumption \ref{As2} for the LTFT.}

%\begin{assumption}[Discrete signals setting]
%\label{As1}
%Consider the setting of Assumption \ref{As2}.
%\begin{enumerate}
%	\item 
%  For each $m\in\NN$, consider an $M=m^{d_s}$ dimensional space $\cS_M\subset L_{\infty}(\RR^{d_s})$ of signals supported in $\mathcal{R}^{d_s}(M/L)$. Suppose that $V_{M}\subset V_{M'}$ for every $m^{d_s}=M<M'=k^{d_s}$. Here, $L^{-1}$ is called the \emph{spacing}, and $M$ the \emph{resolution} of the discretization spaces $\cS_M$.
%	\item
%	Consider a space $V_{\infty}$ satisfying
%	\[\bigcup_{m\in\NN}V_{m^{d_s}} \subset V_{\infty} \subset L_{\infty}(\RR^{d_s})\]
%	Consider a sequence of projections $P_M:V_{\infty}\rightarrow \cS_M$, such that $\norm{P_M}_{\infty}=1$, and $P_MP_{M'}=P_{M'}P_M=P_M$ for every $m^{d_s}=M\leq M'=k^{d_s}$.
%	\item
%	\label{As1_4}
%	Consider a fixed generic signal $s\in V_{\infty}$, and denote $s_M= P_Ms$. Suppose that $\norm{s_M}_{\infty}=\norm{s}$ for every $M>M_0$ for some large enough $M_0$ (NEED THIS??). In our asymptotic analysis, $s_M$ are the discrete signals as $M\rightarrow\infty$.
%\end{enumerate}
%\end{assumption}

 %In the example of the linear spline space, Section \ref{As1_4} of Assumption \ref{As1} means that $s$ takes a maximal value at some interval $[-\frac{M_0-1}{2L},\frac{M_0-1}{2L}]$.

%The domain $G_M$ in Assumption \ref{As2} is not necessarily defined to guarantee an approximate reconstruction $s_M\approx V^M_f^*V^M_f[s_M]$. What we show is that the QMC method approximates $V^M_f^*[S]$, regardless of $s_M\approx V^M_f^*V^M_f(s_M)$, that depends on the linear area discretizability of the transform.

\subsection{Approximation rate of general QMC phase space transforms}

For a sample set $\mathcal{P}_N=\{(\mathbf{a}^1,\mathbf{b}^1),\ldots,(\mathbf{a}^N,\mathbf{b}^N)\}\subset G_{M}$, the QMC approximation \textcolor{black}{(\ref{eq:VfN_star})} of $V^{M*}_f(S)$ takes the form
\begin{equation}
V^{M,N*}_f(S)=\frac{M'}{N}\sum_{n=1}^N S(\mathbf{a}^n,\mathbf{b}^n)\pi(\mathbf{a}^n)f_{\mathbf{b}^n},
\label{eq:QMC_Vf}
\end{equation}
where $S:G'\rightarrow\CC$.
Let $\Gamma\subset\Lambda$ be the set of multi-indices with $\a_j=1$ for every spatial index $j=1,\ldots,d_s$ (see Subsection \ref{General quasi Monte Carlo methods with low discrepancy point sets} for $\Lambda$). In the following we bound the quasi-Monte Carlo synthesis error uniformly pointwise. %There are two ways to deal with the truncation error on the boundary of $\mathcal{R}^{d_s}(M/L)$. %One approach is to use the phase space $G_M$ and the signal $s_M$, and to consider the synthesis $V^{M,N}_f^*[s]$ only on signal domain points $x\in [-m/l+\Delta,m/l-\Delta]^{d_s}$ where $l=L^{1/d_s}$. We take another approach, where for the  signal $s_{M}$, we consider phase space $G_{M}$ where $M'=(m')^{d_s}$ and $m'=m+\left\lceil \Delta l \right\rceil$.

\begin{theorem}
\label{T1}
Consider the setting of Assumption \ref{As2}. %, where for each signal resolution $M$, the volume of phase space is $M'=(m')^{d_s}$ for $m'=m+\left\lceil \Delta l \right\rceil$  where $l=L^{1/d_s}$.
Consider a low discrepancy sample set $\mathcal{P}_N=\{(\mathbf{a}^1,\mathbf{b}^1),\ldots,(\mathbf{a}^N,\mathbf{b}^N)\}\subset G_{M}$ with
\begin{equation}
D^*_N(\mathcal{P}_N)  \leq  C\frac{\log(N)^{d-1}M}{N}.
\label{eq:LDSS}
\end{equation}
and a discrete signal $s_M\in \cS_M$.
 Then,
\begin{equation}
\frac{\norm{V^{M*}_fV^M_f(s_M) - V^{M,N *}_fV^M_f(s_M)}_{\infty}}{\norm{s_M}_{\infty}}\leq C\frac{\log(N)^{d-1}M(1+\e_M)}{N}D
\label{eq:T1}
\end{equation}
where 
\begin{equation}
\e_M=(1+\frac{\left\lceil \Delta l \right\rceil}{M^{1/d_s}})^{d_s}-1 \xrightarrow[M \to 0]{} 0,
\label{eq:epsM}
\end{equation}
and
\[D=D_M=\sum_{\a\in \Gamma}\sum_{\b\in\NN_0^d:\b\leq \a} 
\left(
\begin{array}{c}
	\a \\
	\b
\end{array}
\right)
\iint_{\mathcal{R}^{d-d_s}(L)|_{[\a]_{\mathbf{b}}}}  H(\b;\mathbf{b}) H(\a-\b;\mathbf{b}) \ d\mathbf{b}^{[\a]_{\mathbf{b}}}.\]
Here, $[\a]_{\mathbf{b}}$ is the restriction of the multi-index $\alpha$ to the frequency coordinates $\mathbf{b}$, and $\mathcal{R}^{d-d_s}(L)$ is the domain of the frequency coordinates $\mathbf{b}$ in $G_{M}$.
Moreover,
 for every multi-index $\a\in\Lambda$ we have
\begin{equation}
\abs{\partial_{\a}V^M_f[s_M](\mathbf{a},\mathbf{b})}\leq \norm{s_M}_{\infty} H(\a;\mathbf{b}).
\label{eq:bound_Vfs}
\end{equation}

\end{theorem}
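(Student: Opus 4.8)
## Proof Plan

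The plan is to reduce everything to a pointwise application of the Koksma–Hlawka inequality (Theorem \ref{KoksmaHlawka}), where the integrand is, for each fixed signal-domain point $x$, the function $(\mathbf{a},\mathbf{b})\mapsto V^M_f[s_M](\mathbf{a},\mathbf{b})\,[\mathcal{T}(\mathbf{a})f_{\mathbf{b}}](x)$ on the box $G_M$. First I would rescale $G_M$ affinely to $[0,1]^d$: since $\mu(G_M)=M'$, the QMC sum in (\ref{eq:QMC_Vf}) with weight $M'/N$ is exactly $\mu(G_M)$ times the standard unit-cube QMC average of the rescaled integrand, and $V^{M*}_fV^M_f(s_M)(x)$ is $\mu(G_M)$ times the corresponding integral. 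So for each $x$,
\[
\bigl|V^{M*}_fV^M_f(s_M)(x) - V^{M,N*}_fV^M_f(s_M)(x)\bigr| \le \mu(G_M)\,\mathcal{V}(g_x)\,D^*_N(\widetilde{\mathcal{P}}_N),
\]
where $g_x$ is the rescaled integrand and $\widetilde{\mathcal{P}}_N$ the rescaled point set. The rescaling multiplies $D^*_N$ by $1$ (it is scale-invariant under axis-aligned affine maps onto the unit cube) and the hypothesis (\ref{eq:LDSS}) on $\mathcal{P}_N\subset G_M$ should be read as already incorporating the rescaling, giving $\mu(G_M)D^*_N \lesssim M'\cdot C\log(N)^{d-1}/N \cdot$ (factor), which I will reorganize so that the $M$ comes out and $M'/M = 1+\e_M$ accounts for the $(1+\e_M)$ in (\ref{eq:T1}).

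The main work is bounding the Hardy–Krause variation $\mathcal{V}(g_x)$ uniformly in $x$, and this splits into two pieces. Piece one is the bound (\ref{eq:bound_Vfs}) on the derivatives of the analysis coefficient: for $\a\in\Lambda$, differentiating $V^M_f[s_M](\mathbf{a},\mathbf{b})=\ip{s_M}{\mathcal{T}(\mathbf{a})f_{\mathbf{b}}}$ under the integral and using $|\ip{s_M}{\partial_\a[\mathcal{T}(\mathbf{a})f_{\mathbf{b}}]}| \le \norm{s_M}_\infty \norm{\partial_\a[\mathcal{T}(\mathbf{a})f_{\mathbf{b}}]}_1 \le \norm{s_M}_\infty H(\a;\mathbf{b})$ by Hölder and Assumption \ref{As2}.5. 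Piece two is the variation of the product $g_x(\mathbf{a},\mathbf{b}) = V^M_f[s_M](\mathbf{a},\mathbf{b})\cdot[\mathcal{T}(\mathbf{a})f_{\mathbf{b}}](x)$; here I apply the Leibniz rule $\partial_\a(uv) = \sum_{\b\le\a}\binom{\a}{\b}\partial_\b u\,\partial_{\a-\b}v$ for each $\a\in\Lambda$, then use (\ref{eq:bound_Vfs}) on the first factor and the pointwise consequence $|\partial_{\a-\b}[\mathcal{T}(\mathbf{a})f_{\mathbf{b}}](x)| $ — which, after integrating the $\mathbf{a}$-variables in the definition (\ref{eq:Hardy-Krause}) of $\mathcal{V}$, is controlled by $H(\a-\b;\mathbf{b})$ via (\ref{eq:pi_bound}). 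The key observation that collapses the sum over $\Lambda$ to a sum over $\Gamma$ is that $g_x$ is supported (in $\mathbf{a}$) inside $\mathcal{R}^{d_s}(M'/L)$ with compact support strictly inside for interior frequency values, so the boundary terms where some spatial $\a_j=0$ either vanish or, more robustly, one integrates over the full spatial range and the support confinement makes the substitution-at-the-edge terms drop; this is exactly why only $\a$ with all spatial components equal to $1$ survive, yielding the stated $D=D_M$.

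The hard part will be handling the $\mathbf{a}$-integrations in the Hardy–Krause variation cleanly: the definition (\ref{eq:Hardy-Krause}) integrates $|\partial_\a g_x|$ over the variables with $\a_j=1$ and substitutes the upper edge for the rest, and I need that substituting the spatial upper edge gives zero (by disjoint supports, as in the Remark after Assumption \ref{As2}) so that only $\a\in\Gamma$ contribute — and then that $\int_{\text{spatial}} |\partial_\b V^M_f[s_M]|\cdot|\partial_{\a-\b}[\mathcal{T}(\mathbf{a})f_{\mathbf{b}}](x)|\,d\mathbf{a}$, after one Fubini swap pushing the $\mathbf{a}$-integral onto the translation-derivative factor, is bounded by $\norm{s_M}_\infty H(\b;\mathbf{b})\,H(\a-\b;\mathbf{b})$, since integrating $|[\partial_{\a-\b}f_{\mathbf{b}}](x-\mathbf{a})|$ over $\mathbf{a}\in\RR^{d_s}$ is precisely $\norm{\partial_{\a-\b}f_{\mathbf{b}}}_1 \le H(\a-\b;\mathbf{b})$. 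Summing over $\b\le\a$ and $\a\in\Gamma$ and integrating the remaining frequency variables $\mathbf{b}^{[\a]_{\mathbf{b}}}$ over $\mathcal{R}^{d-d_s}(L)$ reproduces $D$ exactly, and dividing by $\norm{s_M}_\infty$ and taking the supremum over $x$ finishes (\ref{eq:T1}). A minor technical point I would flag is the legitimacy of differentiating under the inner-product integral and the mild smoothness needed for the Koksma–Hlawka inequality, both guaranteed by Assumption \ref{As2}.5 together with the compact supports.
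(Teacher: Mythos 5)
Your proposal is correct and follows essentially the same route as the paper: a pointwise-in-$x$ application of the Koksma--Hlawka inequality after rescaling $G_M$ to $[0,1]^d$, the Leibniz rule on the product $V^M_f[s_M](\mathbf{a},\mathbf{b})\,[\mathcal{T}(\mathbf{a})f_{\mathbf{b}}](x)$, H\"older/$L^1$ control in the spatial variable via (\ref{eq:pi_bound}), the disjoint-support boundary argument that restricts the Hardy--Krause sum to $\Gamma$, and $M'/M=1+\e_M$. The only (cosmetic) difference is that the paper factors the argument through the more general Theorem \ref{T2} with a generic bound $F$ on $\partial_{\a}S$ and then specializes $F(\a;\mathbf{b})=\norm{s_M}_{\infty}H(\a;\mathbf{b})$, whereas you inline that argument directly.
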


In the proof of Theorem \ref{T1}, the bound (\ref{eq:T1}) is derived from (\ref{eq:pi_bound}) and (\ref{eq:bound_Vfs}). In a signal processing methods that transform $V_f[s_M]$ to a function $S$ that preserves the bound (\ref{eq:bound_Vfs}), namely,
\[\abs{\partial_{\a}S(\mathbf{a},\mathbf{b})}\leq C' H(\a;\mathbf{b}),\]
the error estimate (\ref{eq:T1}) still holds with the constant $C'$ instead of $\norm{s_M}_{\infty}$. More generally, we consider signal processing tasks that transform $V_f[s_M]$ to functions $S$ subject to some generic bound. The following theorem estimates the QMC error of synthesizing in this case.

\begin{theorem}
\label{T2}
Consider the setting of Assumption \ref{As2}.
Consider a low discrepancy sample set $\mathcal{P}_N=\{(\mathbf{a}^1,\mathbf{b}^1),\ldots,(\mathbf{a}^N,\mathbf{b}^N)\}\subset G_{M}$ satisfying (\ref{eq:LDSS}).  
Consider  a phase space function $S\in L_2(G_{M})$ satisfying the following two conditions.
\begin{enumerate}
	\item 
	Boundedness: for every multi-index $\a\in\Lambda$
	\[\abs{\partial_{\a}S(\mathbf{a},\mathbf{b})} \leq F(\a;\mathbf{b}),\]
	where $\{F(\a;\cdot)\}_{\a}$ is a set of integrable functions that depend only on $\mathbf{b}$ for each $\alpha$. 
	\item
	\label{T2.2}
	Vanishing spatial boundary condition:  for every multi-index $\a\in\Lambda$, $\partial_{\a}S(\mathbf{a},\mathbf{b})=0$ for every $\mathbf{b}$ and every $\mathbf{a}=(a_1,\ldots,a_{d_s})$ with at least one coordinate $a_j=m'/l$.
\end{enumerate}
 Then 
\begin{equation}
\norm{V^{M*}_f(S) - V^{M,N *}_f(S)}_{\infty}\leq C\frac{\log(N)^{d-1}M(1+\e_M)}{N}D,
\label{eq:T12}
\end{equation}
where $\e_M$ is given in (\ref{eq:epsM}), and
\begin{equation}
D=D_M=\sum_{\a\in\Gamma}\sum_{\b\in\NN_0^d:\b\leq \a} 
\left(
\begin{array}{c}
	\a \\
	\b
\end{array}
\right)
\iint_{\mathcal{R}^{d-d_s}(L)|_{[\a]_{\mathbf{b}}}}  H(\b;\mathbf{b}) F(\a-\b;\mathbf{b}) \ d\mathbf{b}^{[\a]_{\mathbf{b}}}.
\label{eq:DDM}
\end{equation}
\end{theorem}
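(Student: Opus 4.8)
\textbf{Proof plan for Theorem \ref{T2}.}

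The plan is to reduce the synthesis error to an application of the Koksma--Hlawka inequality (Theorem \ref{KoksmaHlawka}) on the rescaled cube $[0,1]^d$, testing the vector-valued error against an arbitrary unit functional and, in fact, against the evaluation functional at a fixed $x\in\RR^{d_s}$ to get the $L_\infty$ bound. First I would fix $x$ and write the integrand
\[
g_x(\mathbf{a},\mathbf{b}) = S(\mathbf{a},\mathbf{b})\,[\mathcal{T}(\mathbf{a})f_{\mathbf{b}}](x) = S(\mathbf{a},\mathbf{b})\,f_{\mathbf{b}}(x-\mathbf{a}),
\]
so that $V^{M*}_f(S)(x) = \int_{G_M} g_x$ and $V^{M,N*}_f(S)(x) = \frac{M'}{N}\sum_n g_x(\mathbf{a}^n,\mathbf{b}^n)$, after the affine rescaling of $G_M$ to $[0,1]^d$ that converts $\mathcal{P}_N$ into a point set of star-discrepancy $\le C\log(N)^{d-1}M/N$ (here the volume factor $\mu(G_M)=M'$ is absorbed, and the $\e_M$ factor in (\ref{eq:epsM}) accounts for $M'/M=(1+\lceil\Delta l\rceil/M^{1/d_s})^{d_s}$). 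Then Theorem \ref{KoksmaHlawka} gives
\[
\abs{V^{M*}_f(S)(x) - V^{M,N*}_f(S)(x)} \le M'(1+\e_M)\,\mathcal{V}(g_x)\,D^*_N(\mathcal{P}_N),
\]
and it remains to bound $\mathcal{V}(g_x)$ uniformly in $x$ by $D$.

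The core of the argument is estimating the Hardy--Krause variation $\mathcal{V}(g_x) = \sum_{\a\in\Lambda}\iint_{G_M|_\a}\abs{\partial_\a g_x}\,d(\mathbf{a},\mathbf{b})^\a$. Two simplifications happen here. First, the vanishing spatial boundary condition (hypothesis \ref{T2.2}) kills every term in which some spatial coordinate is substituted at its upper endpoint $m'/l$ rather than integrated; this forces $\a_j=1$ for all spatial $j$, i.e.\ restricts the sum from $\Lambda$ to $\Gamma$, and simultaneously lets me integrate the spatial variables over all of $\mathcal{R}^{d_s}(M'/L)$. Second, I apply the Leibniz rule to $\partial_\a(S\cdot f_{\mathbf{b}}(x-\cdot))$, splitting each mixed derivative as $\sum_{\b\le\a}\binom{\a}{\b}\partial_\b S\cdot\partial_{\a-\b}[\mathcal{T}(\mathbf{a})f_{\mathbf{b}}]$. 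The spatial integration of $\abs{\partial_{\a-\b}[\mathcal{T}(\mathbf{a})f_{\mathbf{b}}](x)}$ over $\mathbf{a}\in\mathcal{R}^{d_s}(M'/L)$ is, by translation invariance, an integral of $\abs{\partial_{\a-\b}f_{\mathbf{b}}}$ over a translate, hence bounded by $\norm{\partial_{\a-\b}[\pi(\mathbf{a})f_{\mathbf{b}}]}_1 \le H(\a-\b;\mathbf{b})$ via (\ref{eq:pi_bound}); the factor $\abs{\partial_\b S}\le F(\b;\mathbf{b})$ comes from hypothesis 1 and depends only on $\mathbf{b}$, so it pulls out of the spatial integral cleanly. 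Collecting the surviving frequency integrations over $\mathcal{R}^{d-d_s}(L)|_{[\a]_{\mathbf{b}}}$ and renaming $\b\leftrightarrow\a-\b$ in the binomial sum yields exactly the expression (\ref{eq:DDM}) for $D$, uniformly in $x$. Taking the supremum over $x$ converts the pointwise bound into the $\norm{\cdot}_\infty$ bound (\ref{eq:T12}).

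The main obstacle I anticipate is the careful bookkeeping at the spatial boundary: one must verify that, after the Leibniz split, the boundary term $\partial_{\a-\b}[\mathcal{T}(\mathbf{a})f_{\mathbf{b}}]$ evaluated at $a_j=m'/l$ does not by itself already vanish for the wrong reasons (it might, by the support argument in the Remark, but the theorem is stated to rely on the hypothesis on $S$, so I should lean on \ref{T2.2} rather than on support), and that substituting upper endpoints in \emph{frequency} coordinates is harmless because $\mathcal{R}^{d-d_s}(L)$ is a fixed cube whose boundary contributions are already encoded in the $\iint_{\mathcal{R}^{d-d_s}(L)|_{[\a]_{\mathbf{b}}}}$ notation. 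A secondary technical point is justifying differentiation under the weak vector integral and the interchange of $\sup_x$ with the discrepancy bound, but these follow from the integrability assumptions in Assumption \ref{As2}.5 and the fact that the Koksma--Hlawka constant $C$, the discrepancy, and $\e_M$ are all independent of $x$.
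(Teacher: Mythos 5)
Your proposal is correct and follows essentially the same route as the paper's proof: fix $x$, rescale $G_M$ to $[0,1]^d$ and apply Koksma--Hlawka to the integrand $S(\mathbf{a},\mathbf{b})\mathcal{T}(\mathbf{a})f_{\mathbf{b}}(x)$, use the vanishing spatial boundary condition to restrict the Hardy--Krause sum from $\Lambda$ to $\Gamma$, and bound each term via the Leibniz rule together with H\"older in the spatial variable (the $L_1$ bound $H(\cdot;\mathbf{b})$ from (\ref{eq:pi_bound}) against the $L_\infty$ bound $F(\cdot;\mathbf{b})$), with $M'=M(1+\e_M)$ accounting for the enlarged spatial square. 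The bookkeeping points you flag (binomial index swap, relying on hypothesis \ref{T2.2} rather than the support remark) are resolved exactly as you anticipate.
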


\begin{remark}
\label{remark_D}
In general, the value $D$ in
Theorems \ref{T1} and \ref{T2} may depend on $M$. Thus, these theorems are only useful in situations where the constant $D$ is independent of $M$, or at least ``substantially sublinear'' in $M$. In Subsection \ref{Quasi-Monte Carlo LTFT error} we show that $D$ is independent of $M$ in QMC LTFT.
\end{remark}

We start by proving Theorem \ref{T2}.

\begin{proof}[Proof of Theorems \ref{T2}]
\textcolor{black}{We study in this proof the pointwise error between
\begin{equation}
   V^{M*}_f(S)(x)= \int_{G_{M}} S(\mathbf{a},\mathbf{b})\mathcal{T}(\mathbf{a})f_{\mathbf{b}}(x)d\mathbf{a}d\mathbf{b}
    \label{eq:T2_PW}
\end{equation}
and the QMC approximation  $V^{M,N *}_f(S)(x)$ of (\ref{eq:QMC_Vf}).}
As a result of the vanishing spatial boundary condition (Condition \ref{T2.2} of Theorem \ref{T2}), in the following analysis
we need to consider only multi-indices $\a\in\Gamma$ in the Hardy-Krause variation (\ref{eq:Hardy-Krause}).
Let $l=L^{\frac{1}{d_s}}$ and $j=L^{\frac{1}{d-d_s}}$. \textcolor{black}{Recall from Assumption \ref{As2} that for each $m\in\NN$, $m'=m+\left\lceil \Delta l \right\rceil$ and $M'=(m')^{d_s}$.}
To use the Koksma-Hlawka inequality (Theorem \ref{KoksmaHlawka}), we scale $G_{M}$ by a linear change of variables to $[0,1]^d$, and scale the integrand \textcolor{black}{of (\ref{eq:T2_PW})}
\[Q:G_{M}\rightarrow \CC , \quad  Q(\mathbf{a},\mathbf{b};x):= S(\mathbf{a},\mathbf{b})\mathcal{T}(\mathbf{a})f_{\mathbf{b}}(x)\]
to the integrand
\[W:[0,1]^d\rightarrow \CC , \quad  W(\mathbf{y},\mathbf{z};x):=M' S(\frac{m'}{l}\mathbf{y},j\mathbf{z})\mathcal{T}(\frac{m'}{l}\mathbf{y})f_{j\mathbf{z}}(x).\]
The Hardy-Krause variation of $W$ consists of terms, corresponding to multi-indexes $\a\in\Gamma$, of the form
\[M'\iint_{[0,1]^d|_{\a}} (\frac{m}{l},j)^{\a}\abs{[\partial_{\a}Q](\frac{m'}{l} \mathbf{y}, j \mathbf{z}; x)}{(d\mathbf{x},d\mathbf{y})}^{\a} = \iint_{G_{M}|_{\a}} (\frac{m'}{l},j)^{\a}\abs{\partial_{\a}Q( \mathbf{a},  \mathbf{b}; x)}{(d\mathbf{a},d\mathbf{b})}^{\a},\]
where 
\[(\frac{m'}{l},j)^{\a} := \prod_{k=1}^{d_s} (\frac{m'}{l})^{\a_k}\prod_{k=d_s+1}^{d} j^{\a_k}.\]
Thus, since $L\geq 1$, % is a constant, for large enough $M'$ 
we have $(\frac{m'}{l},j)^{\a}\leq M'$, so
\[M'\iint_{[0,1]^d|_{\a}} (\frac{m'}{l},j)^{\a}\abs{[\partial_{\a}Q](\frac{m'}{l} \mathbf{x}, j \mathbf{y}; x)}{(d\mathbf{x},d\mathbf{y})}^{\a} \leq M'\iint_{G_{M}|_{\a}} \abs{\partial_{\a}Q( \mathbf{a}, \mathbf{b}; x)}{(d\mathbf{a},d\mathbf{b})}^{\a}.\]

Now, the theorem follows from the product rule as follows. We have
\[\begin{split} 
& \iint_{G_{M}|_{\a}} \abs{\partial_{\a}Q( \mathbf{a},  \mathbf{b}; x)}{(d\mathbf{a},d\mathbf{b})}^{\a} \\
 & \leq \sum_{\b\in\NN_0:\b\leq \a} 
\left(
\begin{array}{c}
	\a \\
	\b
\end{array}
\right)
\iint_{G_{M}|_{\a}}  \abs{\partial_{\b}\mathcal{T}(\mathbf{a})f_{\mathbf{b}}(x)} \abs{\partial_{\a-\b} S(\mathbf{a},\mathbf{b})} {(d\mathbf{a},d\mathbf{b})}^{\a}.
\end{split}\]
Consider the multi-index $\gamma = (\a_{d_s+1},\ldots,\a_d)$.
By the H\"older's inequality along the spatial direction $\mathbf{a}$ for each fixed $\mathbf{b}$, we have
\[\iint_{G_{M}|_{\a}}\abs{\partial_{\b}\mathcal{T}(\mathbf{a})f_{\mathbf{b}}(x)} \abs{\partial_{\a-\b} S(\mathbf{a},\mathbf{b})} {(d\mathbf{a},d\mathbf{b})}^{\a} \leq    \iint_{\mathcal{R}^{d-d_s}(L)|_{\gamma}} H(\b;\mathbf{b})F(\a-\b;\mathbf{b}){d\mathbf{b}}^{\gamma}.\]
This, together with the fact that we need only consider $\alpha\in\Gamma$, gives %(\ref{eq:T12}) 
\begin{equation}
\norm{V^{M*}_f(S) - V^{M,N *}_f(S)}_{\infty}\leq C\frac{\log(N)^{d-1}M'}{N}D,
\label{eq:T120}
\end{equation}
with $D$ given in (\ref{eq:DDM}).
Last, note that
\[M'= (m+\left\lceil \Delta l \right\rceil)^{d_s}  = M(1+\frac{\Delta l}{M^{1/d_s}})^{d_s},\]
which gives (\ref{eq:T12}).

\end{proof}

\begin{proof}[Proof of Theorems \ref{T1}]

We first show \textcolor{black}{(\ref{eq:bound_Vfs}), namely,} for every multi-index $\a=(\a_1,\ldots,\a_d)\in \Lambda$ we have
\[
\abs{\partial_{\a}V^M_f[s_M](\mathbf{a},\mathbf{b})}\leq \norm{s_M}_{\infty} H(\a;\mathbf{b}).
\]
Indeed, \textcolor{black}{by (\ref{eq:M_An})}, H\"older's inequality, \textcolor{black}{and Leibniz integral rule,}  
\[\begin{split} 
\abs{\partial_{\a}V^M_f[s_M](\mathbf{a},\mathbf{b})}  & \leq \iint_{\mathcal{R}^{d_s}(M/L)} \abs{s_M(x) \partial_{\a}\mathcal{T}(\mathbf{a})f_{\mathbf{b}}(x)}dx \\
 & \leq \norm{s_M}_{\infty}\norm{\partial_{\a}\mathcal{T}(\mathbf{a})f_{\mathbf{b}}}_1 = \norm{s_M}_{\infty} H(\a;\mathbf{b}).
\end{split}\]

Now, Theorem \ref{T1} follows Theorem \ref{T2} with $F(\a;\mathbf{b})=\norm{s_M}_{\infty}H(\a;\mathbf{b})$ and $S=V_f[s_M]$. %, since 
Indeed, the vanishing spatial boundary condition (Condition \ref{T2.2} of Theorem \ref{T2}) is satisfied for $S=V_f[s_M]$ since the supports of $\partial_{\a}\mathcal{T}(\mathbf{a})f_{\mathbf{b}}$ and $s_M$ are disjoint for $\mathbf{a}$ with at least one $a_j=m'/l$, and thus
\begin{equation}
\partial_{\a}V^M_f[s_M](\mathbf{a},\mathbf{b})  =  \iint_{\mathcal{R}^{d_s}(M/L)} s_M(x)\overline{\partial_{\a}\mathcal{T}(\mathbf{a})f_{\mathbf{b}}(x)} dx=0.
\label{eq:QWGS}
\end{equation} 

\end{proof}

\begin{example}[QMC multiplier]
\label{ex:QMC_multiplier}
An example application of Theorem \ref{T2} in signal processing is multipliers. Suppose $V_f[s]$ is multiplied by a function $Q:G\rightarrow\CC$, before synthesized to an output signal. Here, if all derivatives of $Q$ with $\partial_{\alpha}\in\Lambda$ are bounded in $L_{\infty}(G)$, then, by H\"older's inequality, the bound $H$ of (\ref{eq:pi_bound}) is preserved under the application of the multiplier (up to a constant). In this case, if $D$ of Theorem \ref{T1} is independent of $N$, then the QMC multiplier method has error rate of $O(\frac{\log(N)^{d-1}M}{N})$.
\end{example}

\subsection{Error analysis of Quasi-Monte Carlo LTFT}
\label{Quasi-Monte Carlo LTFT error}

In this section we use Theorems \ref{T1} and \ref{T2} to analyze QMC LTFT. 
As explained in Remark \ref{remark_D}, the goal in this section is to show that the $D$ constant of Theorems  \ref{T1} and \ref{T2} is independent of the signal resolution $M$, which shows that QMC LTFT has error rate $O(\frac{M\log^2(N)}{N})$.

Let $f$ be a twice continuously differentiable compactly supported window function.
We consider real valued time signals, and thus it is enough to analyze the positive half frequency line, since the complete signal can be reconstructed from this information \textcolor{black}{due to the Hermitian symmetry \cite{Asignal}} . 
Define the operators $\mathcal{D}_{j,l}$, for $j,l=0,1,2$, by
\[\mathcal{D}_{l,j}q(x) = x^l q^{[j]}(x).\]
For reasons that will become clear soon, we consider the set of windows
\[f_{l,j} = \mathcal{D}_{l,j}f \quad , \quad {\rm for\ }l,j=0,1,2.\]
Let $C>0$ satisfy
\begin{equation}
\label{f_lj_C}
  \forall l,j=0,1,2, \quad \norm{f_{l,j}}_1 < C.  
\end{equation}

Consider the following discretization.
We consider the ``sample-rate'' $L$, \textcolor{black}{and the discrete signal space $\cS_M\subset L_{\infty}(\RR^{d_s})\cap L_2(\RR^{d_s})$ of signals supported in $\mathcal{R}^{d_s}(M/L)$.}  %Here, the $M$ time samples $s_n$ represent samples in $[-\frac{1}{2}\frac{M}{L}, \frac{1}{2}\frac{M}{L}]$, and the $(M+1)/2$ frequency samples $\hs_n$ represent frequency band $[0,L]$.
We consider phase space
\[G_M = [-\frac{1}{2}\frac{M}{L}, \frac{1}{2}\frac{M}{L}] \times [0,L] \times [0,1].\]
Here, we take $G_M$ instead of $G_{M}$ since this does not affect the asymptotic analysis for large $M$.

\begin{proposition}
\label{ClaimLTFT0}
Consider the above construction, and a low discrepancy point set $\mathcal{P}_N$. \textcolor{black}{Then, the LTFT based on $f$ satisfies Assumption \ref{As2},} and the QMC LTFT method satisfies the error bound
\begin{equation}
\frac{\norm{V^{M *}_fV^M_f[s_M] - V^{M,N *}_fV^M_f[s_M]}_{\infty}}{\norm{s_M}_{\infty}}\leq C\frac{\log(N)^{d-1}M(1+\e_M)}{N}D,
\label{eq:T1LTFT}
\end{equation}
where $D$ is a constant that depends only on $C$ of (\ref{f_lj_C}), the minimal number of oscillations $\gamma$, and the oscillation range $\xi$ (see Definition \ref{def:LTFT}).
\end{proposition}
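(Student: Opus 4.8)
The plan is to verify that the LTFT, restricted to $G_M$ and to the discrete signal space $\cS_M$, is an instance of Assumption \ref{As2} with $d_s=1$, $d=3$, and $d-d_s=2$ (the frequency parameter being $\mathbf{b}=(b,c)$), and then to invoke Theorem \ref{T1} directly. The nontrivial content is entirely in producing a bound of the form $\norm{\partial_\alpha[\mathcal{T}(a)f_{b,c}]}_1 \leq H(\alpha;b,c)$ for every $\alpha\in\Lambda$, and then checking that the resulting constant $D=D_M$ of Theorem \ref{T1} is bounded independently of $M$. Translation invariance of the $L^1$ norm means $\norm{\partial_\alpha[\mathcal{T}(a)f_{b,c}]}_1=\norm{\partial_\alpha^{\,b,c}[\,\cdot\,]\,f_{b,c}}_1$ evaluated after the $a$-derivatives have been converted into spatial derivatives, so the whole problem reduces to estimating mixed $a$-, $b$-, $c$-derivatives of the single-atom family $f_{a,b,c}$ in \eqref{eq:LTFT_atom}.

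First I would compute these derivatives explicitly using the formula $f_{a,b,c}(x)=\sqrt{b/\gamma}\,e^{2\pi i(\frac{\xi}{\gamma}c+1)b(x-a)}f(\frac b\gamma(x-a))$ in the CWT regime (and the two STFT regimes, which are easier since the dilation is frozen at $b_0/\gamma$ or $b_1/\gamma$). Differentiating in $a$ brings down a factor of order $b$ from the chirp and a factor of order $b$ from the dilated window's argument; differentiating in $b$ brings down a factor of order $(x-a)$ from the chirp and a factor of order $b^{-1}$ from the dilation normalization and argument; differentiating in $c$ brings down a factor of order $b(x-a)$ from the chirp. The point of introducing the windows $f_{l,j}=\mathcal{D}_{l,j}f=x^l f^{[j]}(x)$ is precisely that every such derivative, after the change of variables $y=\frac b\gamma(x-a)$, is a finite linear combination (with coefficients that are monomials in $b$, $c$, and constants depending on $\gamma,\xi$) of dilated-and-modulated copies of the $f_{l,j}$ with $l,j\in\{0,1,2\}$. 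Since $\norm{\mathcal{D}(\tau)g}_1 = \tau^{1/2}\norm{g}_1$, the $L^1$ norm of each such term is $\sqrt{\gamma/b}\cdot|{\rm monomial\ in\ }b,c|\cdot\norm{f_{l,j}}_1$, and using $\norm{f_{l,j}}_1<C$ from \eqref{f_lj_C} I would read off an admissible $H(\alpha;b,c)$ that is, in the CWT band, a constant times a fixed power of $b$ (positive or negative) times a constant depending on $\gamma,\xi$ — and $c\in[0,1]$ contributes only bounded factors. Differentiability up to $\alpha\in\Lambda$ (i.e. at most one derivative in each of $a,b,c$, hence at most a third mixed derivative, and no second derivative in any single variable) is exactly covered by requiring $f$ twice continuously differentiable, since a derivative in $b$ can hit the argument of $f$ and produce $f^{[1]}$ or, after also differentiating in $a$, $f^{[2]}$.

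Next I would check that $D=D_M$ is $M$-independent. By the formula for $D$ in Theorem \ref{T1}, $D$ is a finite sum over $\alpha\in\Gamma$ and $\beta\le\alpha$ of integrals over the frequency square $\mathcal{R}^{d-d_s}(L)=[0,L]\times[0,1]$ of products $H(\beta;\mathbf{b})H(\alpha-\beta;\mathbf{b})$, restricted to the frequency coordinates appearing in $\alpha$. From the previous step, each such integrand is, on the CWT band $[b_0,b_1]$, a constant (depending on $C,\gamma,\xi$) times an integer power of $b$; on the two STFT bands $[0,b_0]$ and $[b_1,L]$ the dilation is frozen so the $b$-derivatives that previously produced negative powers of $b$ now produce nothing, and the relevant integrands are bounded by constants times $b^0$ or $b^{\pm1}$ over bands whose endpoints are $b_0,b_1$ (constants independent of $M$) or $L$ (a fixed sample rate, independent of $M$) — crucially the spatial variable $a$ ranges over $[-M/2L,M/2L]$ but $\Gamma$ forces $\alpha_a=1$, so $a$ is always integrated out against the $L^1$ bound and never contributes an $M$-growing factor; the $M$-dependence is confined to the overall $M/N$ prefactor in \eqref{eq:T1LTFT} and the $(1+\e_M)$ factor, exactly as in Theorem \ref{T1}. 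Since $b_0=C_1L$, $b_1=C_2L$ with $C_1,C_2$ fixed, even the $L$-dependence cancels or stays bounded, so $D$ depends only on $C$, $\gamma$, $\xi$ as claimed. Finally, to state that the LTFT satisfies Assumption \ref{As2}, I would note that the atoms are of the form $\mathcal{T}(a)f_{b,c}$ with $f_{b,c}$ supported in an interval of length $\gamma/b\le\gamma/b_0=S_0$, so $\Delta=S_0$ works; the remaining items of Assumption \ref{As2} (the discrete signal space, the enlarged phase space) are set up by construction, and then \eqref{eq:T1LTFT} is literally \eqref{eq:T1} with $d=3$.

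I expect the main obstacle to be the bookkeeping in the derivative computation: showing cleanly that \emph{every} mixed derivative $\partial_\alpha f_{a,b,c}$ with $\alpha\in\Lambda$ is a controlled combination of the nine windows $f_{l,j}$, $l,j\in\{0,1,2\}$, with coefficients whose $b$-dependence is an explicit power — in particular handling the interplay between the polynomial-in-$(x-a)$ factors from differentiating the chirp (which become the $x^l$ factors after rescaling) and the $f^{[j]}$ factors from differentiating the window argument, and making sure nothing worse than a second spatial derivative or a second power of $x$ ever appears. The STFT/CWT case split and the matching of the three regimes across $b=b_0$ and $b=b_1$ is routine but must be done. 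Once the single-atom estimate is in hand, plugging into Theorem \ref{T1} and checking the $M$-independence of $D$ is essentially a matter of inspecting which powers of $b$ survive and noting that the band endpoints are $M$-free.
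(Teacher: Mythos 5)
Your proposal is correct and follows essentially the same route as the paper's proof: compute the mixed derivatives $\partial_\a f_{a,b,c}$ for $\a\in\Lambda$ in each of the three frequency regimes, express them as combinations of the dilated--modulated windows $f_{l,j}=\mathcal{D}_{l,j}f$ with coefficients that are explicit powers of $b$ (times constants in $\gamma,\xi$), read off the $H(\a;\cdot)$ bounds from \eqref{f_lj_C}, and then verify that the integrals defining $D$ over $[0,b_0]$, $[b_0,b_1]$, $[b_1,L]$ are $O(1)$ because the band endpoints scale with $L$ and not with $M$. The only detail worth making explicit when writing this up is the composition bookkeeping for the second- and third-order mixed derivatives (which the paper handles by composing its first-order formulas), but you have correctly identified that nothing beyond $l,j\le 2$ ever appears.
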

%In the rest of this subsection we prove Claim \ref{ClaimLTFT0}.

\begin{proof}
We compute the integrals (\ref{eq:DDM}) in the three subdomains $[0,b_0]$, $(b_0,b_1)$, and $[b_1,L]$, differentiating each case of (\ref{eq:LTFT_atom}) separately, which is enough by continuity.
Let us start with the subdomain of $G_{M}$ of CWT atoms, namely, the middle frequencies.
 %Since we are interested in frequencies above $b_0$,
%we consider
%\[G_M^{\rm W} = [-\frac{1}{2}\frac{M}{L}, \frac{1}{2}\frac{M}{L}] \times [b_0,L\pi] \times [0,1].\]
%
To compute the $H$ functions of (\ref{eq:pi_bound}) in Assumption \ref{As2},  we first compute the derivatives $\partial_{\a}f_{a,b,c}$. We have
\[\partial_a \tau(a,b,c)f(x) = \partial_a \Big( \sqrt{\frac{b}{\gamma}}e^{2i\pi(\frac{\xi}{\gamma} c + 1)b(x-a)}f\big(\frac{b}{\gamma}(x-a)\big)\Big)\]
\begin{equation}
\quad\quad =  -b\Big(2i\pi(\frac{\xi}{\gamma} c + 1)\tau(a,b,c)\mathcal{D}_{0,0}f(x)  +\frac{1}{\gamma} \tau(a,b,c)\mathcal{D}_{0,1}f(x)\Big).
\label{eq:WP1}
\end{equation}
Moreover,
\[\begin{split}\partial_b \tau(a,b,c)f(x) & = \partial_b \Big( \sqrt{\frac{b}{\gamma}}e^{2i\pi(\frac{\xi}{\gamma} c + 1)b(x-a)}f\big(\frac{b}{\gamma}(x-a)\big)\Big)\\
& =\frac{1}{2}\frac{1}{b}\sqrt{\frac{b}{\gamma}}e^{2i\pi(\frac{\xi}{\gamma} c + 1)b(x-a)}f\big(\frac{b}{\gamma}(x-a)\big)\\
& \quad + \frac{1}{b}  2i\pi(\xi c + \gamma)\sqrt{\frac{b}{\gamma}}e^{2i\pi(\frac{\xi}{\gamma} c + 1)b(x-a)}\frac{b}{\gamma}(x-a)f\big(\frac{b}{\gamma}(x-a)\big)\\
 & \quad +\frac{1}{b}\sqrt{\frac{b}{\gamma}}e^{2i\pi(\frac{\xi}{\gamma} c + 1)b(x-a)}\frac{b}{\gamma}(x-a)f'\big(\frac{b}{\gamma}(x-a)\big)
 \end{split}\] 
\begin{equation}
= \frac{1}{b}\Big(\frac{1}{2}\tau(a,b,c)\mathcal{D}_{0,0}f(x) + 2i\pi(  \xi c+\gamma )\tau(a,b,c)\mathcal{D}_{1,0}f(x) + \tau(a,b,c)\mathcal{D}_{1,1}f(x) \Big).
\label{eq:WP2}
\end{equation}
Last, similarly to the above calculation,
\[\partial_c \tau(a,b,c)f(x) = \partial_c \Big( \sqrt{\frac{b}{\gamma}}e^{2i\pi(\frac{\xi}{\gamma} c + 1)b(x-a)}f\big(\frac{b}{\gamma}(x-a)\big)\Big)\]
\begin{equation}
=2i\pi\xi\tau(a,b,c)\mathcal{D}_{1,0}f(x).
\label{eq:WP3}
\end{equation}

Next, we construct $H$ bounds satisfying (\ref{eq:pi_bound}).
We have
\[\norm{f_{a,b,c}}_1 = \int \sqrt{\frac{b}{\gamma}}\abs{f\big(\frac{b}{\gamma}(x-a)\big)} dx=b^{-0.5}\gamma^{0.5}\norm{f}_1,\]
so, by (\ref{f_lj_C}), we choose for $b_0<b<b_1$
\[H(I;b)=b^{-0.5}\gamma^{0.5}C.\]
\textcolor{black}{
Moreover, by  (\ref{f_lj_C}) and (\ref{eq:WP1}),
\[
\begin{split}
  \norm{\partial_a f_{a,b,c}}_1& = 
\int \abs{  b\Big(2i\pi(\frac{\xi}{\gamma} c + 1)\tau(a,b,c)\mathcal{D}_{0,0}f(x)  +\frac{1}{\gamma} \tau(a,b,c)\mathcal{D}_{0,1}f(x)\Big)} dx  \\
& \leq b2\pi(\frac{\xi}{\gamma} c + 1)\int \abs{  (\tau(a,b,c)\mathcal{D}_{0,0}f(x)} dx + \frac{b}{\gamma}\int\abs{  \tau(a,b,c)\mathcal{D}_{0,1}f(x)} dx\\
& \leq b2\pi(\frac{\xi}{\gamma} c + 1) b^{-0.5}\gamma^{0.5}\norm{\mathcal{D}_{0,0}f}_1 + \frac{b}{\gamma}b^{-0.5}\gamma^{0.5}\norm{\mathcal{D}_{0,1}f}_1 dx.
\end{split}
\]
Hence, we choose
\[H(\partial_a;b) = b^{0.5}\gamma^{0.5}C\big(2\pi(\frac{\xi}{\gamma}  + 1)+\frac{1}{\gamma}\big)=:b^{0.5}\gamma^{0.5}C B_a^{\rm m}\]
with $B_a^{\rm m}=\big(2\pi(\frac{\xi}{\gamma}  + 1)+\frac{1}{\gamma}\big)$.
Similarly, by (\ref{f_lj_C}), (\ref{eq:WP2}) and (\ref{eq:WP3}),  we choose
\[
 H(\partial_b;b) =b^{-1.5}\gamma^{0.5}C\big(0.5+2\pi(\xi+\gamma)+1\big)=:b^{-1.5}\gamma^{0.5}C B_b^{\rm m},
\]
and
\[H(\partial_c;b)=b^{-0.5}\gamma^{0.5}C2\pi\xi=:b^{-0.5}\gamma^{0.5}C B_c^{\rm m},\]
where  $B^m_b=\big(0.5+2\pi(\xi+\gamma)+1\big)$ and $B^m_c=2\pi\xi$.}
Moreover, by compositions of formulas (\ref{eq:WP1})--(\ref{eq:WP3}) for higher order derivatives, we choose
\[\begin{split}H(\partial_a\partial_b;b) & =b^{-0.5}\gamma^{0.5} C B_a^{\rm m}B_b^{\rm m}\\
 H(\partial_a\partial_c;b) & =b^{0.5}\gamma^{0.5}C B_a^{\rm m} B_c^{\rm m}\\
 H(\partial_b\partial_c;b) & =b^{-1.5}\gamma^{0.5}C B_b^{\rm m} B_c^{\rm m}\\
 H(\partial_a\partial_b\partial_c;b) & =b^{-0.5}\gamma^{0.5} C B_a^{\rm m} B_b^{\rm m} B_c^{\rm m}.
 \end{split}\]

Now, let us treat the two STFT parts %, in $G_{M}^{\rm l}$ and $G_{M}^{\rm h}$,
 where we denote by $b_j$ either $b_0$ or $b_1$. We have
\[\begin{split} 
\partial_a \tau(a,b,c)f &  = \partial_a \Big( \sqrt{\frac{b_j}{\gamma}}e^{2i\pi(\frac{\xi}{\gamma} c b_j+ b)(x-a)}f\big(\frac{b_j}{\gamma}(x-a)\big)  \Big)\\
 & = -2i\pi(\frac{\xi}{\gamma} c b_j+ b)\tau(a,b,c)D_{0,0}f(x)  - \frac{b_j}{\gamma}\tau(a,b,c)D_{0,1}f(x).
 \end{split}\]
Moreover,
\[\begin{split}\partial_b\tau(a,b,c)f &= \partial_b \Big( \sqrt{\frac{b_j}{\gamma}}e^{2i\pi(\frac{\xi}{\gamma} c b_j+ b)(x-a)}f\big(\frac{b_j}{\gamma}(x-a)\big)  \Big)\\
 & =   2i\pi\frac{\gamma}{b_j} \tau(a,b,c)D_{1,0}f(x). 
 \end{split}\]
Last,
\[\begin{split}\partial_c\tau(a,b,c)f & = \partial_c \Big( \sqrt{\frac{b_j}{\gamma}}e^{2i\pi(\frac{\xi}{\gamma} c b_j+ b)(x-a)}f\big(\frac{b_j}{\gamma}(x-a)\big)  \Big)\\
& =  2i\pi\xi \tau(a,b,c)D_{1,0}f(x). 
\end{split}\]

Thus, as before, we choose for $0<b<b_0$ %\textcolor{blue}{(we leave $b_0^{-0.5}$ outside the constant, since when multiplied in pairs this will cancel the $b$ integration of length $b_0$, so only the multiplication of the constant matters)}
\[H(I;b)=b_0^{-0.5}\gamma^{0.5}C.\]
We moreover choose
\[H(\partial_a;b)=2\pi(\xi\gamma^{-0.5} c b_0^{0.5}+ b_0^{-0.5}\gamma^{0.5}b)C  + b_0^{0.5}\gamma^{-0.5}C \leq b_0^{-0.5}\gamma^{0.5}C B^{\rm l}_a,\]
where
 $B^{\rm l}_a = 2\pi(\xi\gamma^{-1}  b_0+ b_0)  + b_0\gamma^{-1}$. 
We choose
\[H(\partial_b;b)=2\pi\gamma^{1.5}b_0^{-1.5} C=:b_0^{-0.5}\gamma^{0.5}C B^{\rm l}_b,\]
with
$B^{\rm l}_b=2\pi\gamma  b_0^{-1}$.  
We  choose
\[H(\partial_c;b)=b_0^{-0.5}\gamma^{0.5}C 2\pi\xi =:b_0^{-0.5}\gamma^{0.5}C B^{\rm l}_c,\]
with
$ B^{\rm l}_c= 2\pi\xi$. 
Moreover, we define
\[\begin{split} 
H(\partial_a\partial_b;b)&=b_0^{-0.5}\gamma^{0.5}C B^{\rm l}_aB^{\rm l}_b,\\
H(\partial_a\partial_c;b)&=b_0^{-0.5}\gamma^{0.5} C B^{\rm l}_a B^{\rm l}_c,\\
H(\partial_b\partial_c;b)&=b_0^{-0.5}\gamma^{0.5}C B^{\rm l}_b B^{\rm l}_c,\\
H(\partial_a\partial_b\partial_c;b)&=b_0^{-0.5}\gamma^{0.5}C B^{\rm l}_a B^{\rm l}_b B^{\rm l}_c.
\end{split}\]

For high frequency STFT atoms, $b_1<b<L$, and we choose
\[\begin{split} 
H(I;b)&=b_1^{-0.5}\gamma^{0.5}C,\\
H(\partial_a;b)&=2\pi(\xi\gamma^{-0.5} c b_j^{0.5}+ b_j^{-0.5}\gamma^{0.5}b)C  + b_j^{0.5}\gamma^{-0.5}C <b_1^{-0.5}\gamma^{0.5} L C B^{\rm h}_a,
\end{split}\]
where $B^{\rm h}_a= 2\pi(\xi\gamma^{-1}  b_1 L^{-1}+ 1)  + b_1\gamma^{-1}$. 
We choose
\[H(\partial_b;b)=2\pi\gamma^{1.5}b_1^{-1.5} C=:b_1^{-0.5}\gamma^{0.5}C B^{\rm h}_b,\]
with 
$B^{\rm h}_b = 2\pi\gamma b_1^{-1}$. 
We choose
\[H(\partial_c;b)=b_1^{-0.5}C\gamma^{0.5} 2\pi\xi =:b_1^{-0.5}\gamma^{0.5}C B^{\rm h}_c,\]
with
$B^{\rm h}_c=2\pi\xi$. 
We moreover choose
\[\begin{split} 
H(\partial_a\partial_b;b)&=b_1^{-0.5}\gamma^{0.5}L C B^{\rm h}_aB^{\rm h}_b,\\
H(\partial_a\partial_c;b)&=b_1^{-0.5}\gamma^{0.5} L C B^{\rm h}_a B^{\rm h}_c,\\
H(\partial_b\partial_c;b)&=b_1^{-0.5}\gamma^{0.5}C B^{\rm h}_b B^{\rm h}_c,\\
H(\partial_a\partial_b\partial_c;b)&=b_1^{-0.5}\gamma^{0.5} L C B^{\rm h}_a B^{\rm h}_b B^{\rm h}_c.
\end{split}\]

It is now a matter of a direct calculation to show that
\begin{equation}
D=\sum_{\a\in \Gamma}\sum_{\b\in\NN_0:\b\leq \a} 
\left(
\begin{array}{c}
	\a \\
	\b
\end{array}
\right)
\iint_{([0,L]\times[0,1])|_{[\a]_{(b,c)}}}  H(\b;b) H(\a-\b;b) \ d{(b,c)}^{[\a]_{(b,c)}}= O(1).
\label{eq:dd5ge}
\end{equation}
Namely, $D$ is independent of the resolution $M$.
The main step in this calculation is to observe that the contribution to (\ref{eq:dd5ge}) due to the wavelet parts boils down to integration of a constant times $b^{-1}$, which is $O(\ln(\frac{b_1}{b_0})) = O(1)$, since $b_0=C_0L$ and $b_1=C_1 L$. %\textcolor{blue}{I will choose $b_0=\alpha L$, $b_1=\beta L$, so I actually get $\ln(\frac{b_1}{b_0})=O(1)$. Nut perhaps leave $b_1,b_0$ free and give examples, where two important ones are $b_0=const$ or $b_0=\alpha L$, and $b_1=\beta L$ Or, write it as $\alpha L, \beta L$, but then say what if $\alpha_L= C/L$.} 
   The dominant terms of the contribution to (\ref{eq:dd5ge}) due to the high frequency STFT is an integration over the integral $[C_2 L, L]$ of length $O(L)$, of a constant function $O(b_1^{-2}L)=O(L^{-1})$, and evaluations of a constant function of order $O(b_1^{-1}L) = O(1)$. Similarly, the contribution to (\ref{eq:dd5ge}) due to the low frequency STFT is $O(1)$.
% Note that every term in the integrand which is linear in $L$ is also linear in $b_1^{-1}=C_2^{-1}L^{-1}$, which is independent of $L$.
\end{proof}

Proposition \ref{ClaimLTFT0} states that the QMC synthesis method of $V_f[s_M]$ has error of order
$O(\frac{M\log(N)^2}{N})$. 
%
%Now, let us consider a mild restriction on the smoothness of the signal that improves the error bound.
%By the above calculation and by (\ref{eq:V_fs_B}), any wavelet transform of a signal $s\in L_{\infty}(\RR)$ satisfies for every $\a\in\Lambda$
%\[\abs{\partial_{\a} V_f^M (s_M)(a,b,c)} \leq  C b^{-0.5}\]
%for some constant $C$ that depends on $\norm{s_M}_{\infty}$.
%Let us consider a slightly stronger bound
%\[\abs{\partial_{\a} V_f^M (s_M)(a,b,c)} \leq b^{-0.5-\e} =: F(\a;b) \]
%for some $\e>0$. This bound can be thought of a a smoothness assumption, since decay of the wavelet coefficients corresponds to smoothness of the signal. Now, the contribution to (\ref{eq:DDM}) due to the wavelet parts boils down to integration of a constant times $b^{-1-\e}$, which is $O(1)$. Thus, for phase space function with the smoothness assumption
%\[\abs{S(a,b,c)} \leq b^{-0.5-\e}\]
% the QMC method has error of
%\[O(\frac{M\log(N)^2}{N}).\]
%
%
In general, we consider phase space signal processing procedures that preserve the bounds $H(\alpha;b)$ of $V_{f}[s](a,b,c)$. For such procedures, the QMC LTFT method also has error rate
$O(\frac{M\log(N)^2}{N})$. One such example is multipliers (see Example \ref{ex:QMC_multiplier}). In the next subsection we study another example, namely, phase vocoder.

\subsection{Error analysis of QMC integer time dilation LTFT phase vocoder}

In integer time dilation phase vocoder, the QMC synthesis is computed for the dilated and phase corrected version of $V_f[s_M]$. The goal in this section is to illustrate that under certain assumptions the bounds $H(\a;b)$ of $\abs{\partial_{\a}V_f[s_M]}$ are preserved, up to a constant, under dilation and phase correction. Hence, the overall QMC phase vocoder method has error rate of $O(\frac{M\log(N)^2}{N})$. 
 The assumptions that we develop in this subsection are somewhat ad hoc, and in future work we will study general settings that satisfy these assumptions.

Let $S_M$ be normalized in $L_{\infty}(\RR)$. Let
 \[V_f[s_M](a,b,c)=S(a,b,c)=e^{i\theta(a,b,c)}R(a,b,c),\]
 for $\theta(a,b,c),R(a,b,c)\in\RR_{+}$,
 and consider the dilated signal in phase space
\[S_D(a,b,c) = e^{iD\theta(a/D,b,c)}R(a/D,b,c).\]
\textcolor{black}{Consider the LTFT bounds $\{H(\a;b)\}_{\a\in\Lambda}$ from Subsection \ref{Quasi-Monte Carlo LTFT error}. By (\ref{eq:bound_Vfs}), for every multi-index $\a\in\Lambda$ we have
\[\abs{\partial_{\a}S(a,b,c)}\leq H(\a;b).\]
In the next claim, we show that the first order $H$ bounds of $S(a,b,c)$ are preserved up to constant for $S_D(a,b,c)$}

\textcolor{black}{
\begin{claim}
 Under the above construction,
    \begin{align}
    \label{eq:C1}\abs{S_D(a,b,c)} & \leq H(I;b), \\
    \label{eq:C2}\abs{\partial_{a} S_D(a,b,c)}  & \leq (1+D^{-1})H(\partial_{a};b),\\
    \label{eq:C3}\abs{\partial_{b} S_D(a,b,c)}  & \leq (1+D)H(\partial_{b};b).
    \end{align}
\end{claim}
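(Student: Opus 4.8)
The plan is to reduce all three inequalities to a single elementary fact about a polar factorization $S=e^{i\theta}R$ with a smooth phase $\theta$ and a smooth nonnegative amplitude $R$: each first-order partial derivative of $S$ dominates simultaneously the corresponding derivative of $R$ and the corresponding derivative of $\theta$ weighted by $R$. Indeed, from $R=e^{-i\theta}S$ one has, for any variable $v\in\{a,b,c\}$,
\[\partial_v R=-i(\partial_v\theta)R+e^{-i\theta}\partial_v S,\]
and since $R$ is real-valued, taking the real and imaginary parts of this identity (and using that $(\partial_v\theta)R$ is real) gives $\partial_v R=\Re(e^{-i\theta}\partial_v S)$ and $(\partial_v\theta)R=\pm\Im(e^{-i\theta}\partial_v S)$, hence
\[\abs{\partial_v R}\le\abs{\partial_v S}\qquad\text{and}\qquad\abs{(\partial_v\theta)R}\le\abs{\partial_v S}.\]
Together with the bound $\abs{\partial_{\a}S(a,b,c)}\le H(\a;b)$ from (\ref{eq:bound_Vfs}) applied to the first-order multi-index $\a$ corresponding to $v$, this yields $\abs{\partial_v R}\le H(\partial_v;b)$ and $\abs{(\partial_v\theta)R}\le H(\partial_v;b)$ for $v=a,b,c$.

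Granting this, (\ref{eq:C1}) is immediate since $\abs{S_D(a,b,c)}=R(a/D,b,c)=\abs{S(a/D,b,c)}\le H(I;b)$ by the $\a=I$ case of (\ref{eq:bound_Vfs}). For (\ref{eq:C2}) one differentiates $S_D(a,b,c)=e^{iD\theta(a/D,b,c)}R(a/D,b,c)$ in $a$; the prefactor $D$ in the exponent cancels the $1/D$ coming from the chain rule on $\theta(\cdot/D)$, while the amplitude term keeps a factor $1/D$, so
\[\partial_a S_D(a,b,c)=e^{iD\theta(a/D,b,c)}\Big(i(\partial_a\theta)(a/D,b,c)\,R(a/D,b,c)+\frac{1}{D}(\partial_a R)(a/D,b,c)\Big),\]
and the triangle inequality with the two estimates above gives $\abs{\partial_a S_D(a,b,c)}\le H(\partial_a;b)+\frac{1}{D}H(\partial_a;b)=(1+D^{-1})H(\partial_a;b)$. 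For (\ref{eq:C3}) the variable $b$ is not rescaled, so the factor $D$ in the exponent now survives, and
\[\partial_b S_D(a,b,c)=e^{iD\theta(a/D,b,c)}\Big(iD(\partial_b\theta)(a/D,b,c)\,R(a/D,b,c)+(\partial_b R)(a/D,b,c)\Big),\]
whence $\abs{\partial_b S_D(a,b,c)}\le D\,H(\partial_b;b)+H(\partial_b;b)=(1+D)H(\partial_b;b)$.

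The one genuine subtlety — and, I expect, the main obstacle — is justifying the differentiable polar decomposition $S=e^{i\theta}R$: where $R(a,b,c)=0$ the phase $\theta$ is undefined, and elsewhere $\theta$ is only determined up to a locally constant integer multiple of $2\pi$. This is precisely the regularity that the ``somewhat ad hoc'' standing hypotheses of this subsection are meant to supply; on the open set $\{R>0\}$ one works with a local smooth branch of $\theta$ and the identities above hold verbatim, and the complementary set is absorbed either by continuity of $S_D$ and its first derivatives or by the fact that it is negligible for the integrals in which these pointwise bounds are subsequently used. By contrast, the chain-rule cancellation in (\ref{eq:C2}) is the only place where the specific form $e^{iD\theta(a/D,\cdot)}R(a/D,\cdot)$ of the phase-vocoder nonlinearity is used, and it is what produces the asymmetry between the $(1+D^{-1})$ factor in (\ref{eq:C2}) and the $(1+D)$ factor in (\ref{eq:C3}).
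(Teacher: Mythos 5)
Your proof is correct and follows essentially the same route as the paper: the key step in both is that, writing $\partial_v S = i(\partial_v\theta)e^{i\theta}R + e^{i\theta}(\partial_v R)$, the two summands are orthogonal in the complex plane (equivalently, they are the imaginary and real parts of $e^{-i\theta}\partial_v S$), so each of $\abs{(\partial_v\theta)R}$ and $\abs{\partial_v R}$ is bounded by $\abs{\partial_v S}\le H(\partial_v;b)$, after which the chain-rule computation for $S_D$ gives the factors $(1+D^{-1})$ and $(1+D)$ exactly as you derive them. Your closing remark on the regularity of the polar decomposition at zeros of $R$ is a legitimate point that the paper passes over silently, but it does not change the argument.
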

(\ref{eq:C3})
\begin{proof}
First, (\ref{eq:C1}) directly follows the change of variable $a/D\mapsto a$.
Next, we show that for any derivative $\partial_{\a}$ of first order, the bounds of $\abs{\partial_{\a}S_D(a,b,c)}$ can be taken as constant times $H(\a;b)$.
For that, note that  for every first order $\partial_{\a}$,
\[\partial_{\a} S=  i(\partial_{\a}\theta) e^{i\theta}R + e^{i\theta}(\partial_{\a}R).\]
Now, since $i\partial_{\a}\theta e^{i\theta}R$ is orthogonal to $e^{i\theta}\partial_{\a}R$ in the complex plane, we must have
\begin{equation}
\abs{\big(\partial_{\a}\theta(a,b,c)\big)R(a,b,c) } \leq \abs{\partial_{\a} S(a,b,c) }\leq H(\partial_{\a};b)
\label{eq:ra1}
\end{equation}
and
\begin{equation}
\abs{\partial_{\a}R(a,b,c)} \leq \abs{\partial_{\a} S(a,b,c) } \leq H(\partial_{\a};b).
\label{eq:ra2}
\end{equation}
We can hence bound $\abs{\partial_{\a}S_D}$ for all derivatives of order 1 using (\ref{eq:ra1}) and (\ref{eq:ra2}) as follows
\[\begin{split}
\abs{\partial_{a} S_D(a,b,c)} &= \abs{i [\partial_a \theta](a/D,b,c) e^{iD\theta(a/D,b,c)} R(a/D,b,c) +   \frac{1}{D}e^{iD\theta(a/D,b,c)}[\partial_{a} R](a/D,b,c)} \\
& \leq (1+D^{-1})H(\partial_{a};b) , 
\end{split}\]
\[\begin{split}
\abs{\partial_{b} S_D(a,b,c)}  & = \abs{i D\partial_b \theta(a/D,b,c) e^{iD\theta(a/D,b,c)} R(a/D,b,c) +   e^{iD\theta(a/D,b,c)}\partial_{b} R(a/D,b,c)}\\
 & \leq (1+D)H(\partial_{b};b),
\end{split}\]
and
\[\begin{split}
\abs{\partial_{c} S_D(a,b,c)} & = \abs{i D \partial_c \theta(a/D,b,c) e^{iD\theta(a/D,b,c)} R(a/D,b,c) +   e^{iD\theta(a/D,b,c)}\partial_{c} R(a/D,b,c)}\\
 & \leq (1+D)H(\partial_{c};b).
\end{split}\]
\end{proof}
}

\textcolor{black}{Obtaining bounds of the form $\abs{\partial_{\a}S_D(a,b,c)}\leq J H(\a;b)$ for higher order derivatives $\partial_{\a}$, and constants $J$, is more involved and requires some assumptions.} In the following discussion we motivate these assumptions by heuristic arguments.
Let us study as an example the term $\partial_a\partial_b S_D$. We have
\begin{equation}
\partial_{a}\partial_{b} S_D(a,b,c)= e^{iD\theta(a/D,b,c)}\Big(-D  X^{a,b}_1(a/D,b,c) + \frac{1}{D}X^{a,b}_2(a/D,b,c) +  i Y^{a,b}(a/D,b,c)\Big)
\label{eq:dfmmg8}
\end{equation}
where
\[\begin{split} 
X^{a,b}_1 &= \partial_a \theta\partial_b \theta R, \\
X^{a,b}_2 &= \partial_a\partial_{b} R,   \\
Y^{a,b} & =  \partial_a\partial_b \theta R  +  \partial_a \theta \partial_b R  +   \partial_b\theta\partial_{a} R .
\end{split}\]
Note that for $D=1$, by the fact that $S_1=S$, and by orthogonality in the complex plane, 
\begin{equation}
\abs{-X^{a,b}_1 + X^{a,b}_2 } \leq \abs{\partial_a\partial_b S}
\label{eq:4tw}
\end{equation}
and
\[\abs{Y^{a,b}} \leq \abs{\partial_a\partial_b S}.\]
Therefore, the ``imaginary'' term
$ \abs{Y^{a,b}(a/D,b,c)}$ of (\ref{eq:dfmmg8}) is bounded by $\abs{\partial_a\partial_b S}$ for any $D$.
To bound  the ``real'' term of (\ref{eq:dfmmg8}) by $C\abs{\partial_a\partial_b S}$ for some constant $C$ we need an assumption. 
 Note that (\ref{eq:4tw}) follows from orthogonality in the complex plane.
However, to bound the real term of (\ref{eq:dfmmg8}) we need to bound the terms $\abs{ X^{a,b}_1}$ and $\abs{X^{a,b}_2}$ separately.
If one of these terms is asymptotically larger than $\abs{\partial_a\partial_b S}$, then so must the other, since their sum has magnitude $\abs{\partial_a\partial_b S}$.
It is thus enough to assume that 
\begin{equation}
\abs{X^{a,b}_2} = \abs{\partial_b\partial_{a} R} \leq C\abs{\partial_a\partial_b S}
\label{eq:Asss1}
\end{equation}
for some constant $C$.
We justify this assumption heuristically as follows. If $X^{a,b}_1,X^{a,b}_2 \gg \abs{\partial_a\partial_b S}$, we must have
\begin{equation}
\frac{X^{a,b}_1}{X^{a,b}_2}\approx \pm 1.
\label{eqin2}
\end{equation}
Informally, the restriction (\ref{eqin2}) defines a subspace of co-dimension 1 in some space of functions, and a generic choice of $X^{a,b}_1$ and $X^{a,b}_2$ will typically not be in this subspace, since it is of measure zero. Of course, this argument is not mathematically rigorous, and is given here purely to inspire some intuition for Assumption (\ref{eq:Asss1}).
As a result of Assumption (\ref{eq:Asss1}), we must also have
\begin{equation}
\abs{X^{a,b}_1} \leq (C+1)\abs{\partial_a\partial_b S},
\label{eq:Asss15}
\end{equation}
so
\begin{equation}
\abs{-D  X^{a,b}_1(a/D,b,c) + \frac{1}{D}X^{a,b}_2(a/D,b,c) } \leq (C+1)(D+D^{-1})\abs{\partial_a\partial_b S},
\label{eq:ghr4}
\end{equation}
and hence
\begin{equation}
\abs{\partial_{a}\partial_{b} S_D(a,b,c)} \leq \big(1+(C+1)(D+D^{-1})\big)H(\partial_a\partial_b,b).
\label{eqdl:74f}
\end{equation}

%REMARK: if $R$ is close to zero then secind derivative of abs may be large while second  derviative of original small.
 
A similar analysis for all other partial derivatives of order 2,3 in $\Lambda$ gives the following. For $\partial_a\partial_c$ we have
\begin{equation}
\partial_{a}\partial_{c} S_D(a,b,c)= e^{iD\theta(a/D,b,c)}\Big(-D  X^{a,c}_1(a/D,b,c) + \frac{1}{D}X^{a,c}_2(a/D,b,c) +  i Y^{a,c}(a/D,b,c)\Big)
\label{eq:dfmmg88}
\end{equation}
where
\[\begin{split} 
X^{a,c}_1 &= \partial_a \theta\partial_c \theta R, \\
 X^{a,c}_2&  = \partial_{a}\partial_c R,  \\
  Y^{a,c} & =  \partial_a \partial_c\theta R  +  \partial_a \theta \partial_c R  +   \partial_c\theta\partial_{a} R . 
  \end{split}\]
We assume 
\begin{equation}
\abs{X^{a,c}_2} = \abs{\partial_{a}\partial_c R} \leq C\abs{\partial_a\partial_c S},
\label{eq:Asss2}
\end{equation}
and obtain
\begin{equation}
\abs{\partial_{a}\partial_{c} S_D(a,b,c)} \leq \big(1+(C+1)(D+D^{-1})\big)H(\partial_a\partial_c;b).
\label{eqdl:74f2}
\end{equation}

For $\partial_b\partial_c$, we have
\begin{equation}
\partial_{b}\partial_{c} S_D(a,b,c)= e^{iD\theta(a/D,b,c)}\Big(-D^2  X^{b,c}_1(a/D,b,c) + X^{b,c}_2(a/D,b,c) +  iD Y^{b,c}(a/D,b,c)\Big)
\label{eq:dfmmg8888}
\end{equation}
where
\[\begin{split} X^{b,c}_1 &= \partial_b \theta\partial_c \theta R, \\
X^{b,c}_2 &= \partial_{b}\partial_c R,   \\
Y^{b,c}&=  \partial_b \partial_c\theta R  +  \partial_b \theta \partial_c R  +   \partial_c\theta\partial_{b} R . 
\end{split}\]
We assume 
\begin{equation}
\abs{X^{b,c}_2} = \abs{\partial_{b}\partial_c R} \leq C\abs{\partial_b\partial_c S},
\label{eq:Asss3}
\end{equation}
and obtain
\begin{equation}
\abs{\partial_{a}\partial_{c} S_D(a,b,c)} \leq \big(1+C(D^2+1)\big)H(\partial_b\partial_c;b).
\label{eqdl:74f21}
\end{equation}

For $\partial_a\partial_b\partial_c$, we have
\[\begin{split}  
\partial_a\partial_b\partial_c S_D(a,b,c) &= e^{iD\theta(a/D,b,c)}\Big(-D  X^{a,b,c}_1(a/D,b,c) + \frac{1}{D}X^{a,b,c}_2(a/D,b,c)\Big)\\
 & \quad + i e^{iD\theta(a/D,b,c)}\Big( Y^{a,b,c}_1(a/D,b,c) -  D^2 Y^{a,b,c}_2(a/D,b,c)\Big),
 \end{split}\]
where
\[\begin{split}X^{a,b,c}_1 & = \partial_b\partial_a \theta \partial_c\theta  R  +\partial_a \partial_c\theta \partial_b \theta  R  + \partial_a \theta \partial_b\partial_c \theta  R\\
& \quad  +\partial_a \theta \partial_b \theta  \partial_cR  +\partial_a \theta \partial_c \theta  \partial_b R +\partial_b\theta \partial_c \theta \partial_{a} R.  \\
X^{a,b,c}_2 & =  \partial_a\partial_b\partial_c R. \\
Y^{a,b,c}_1 & =\partial_c\partial_b\partial_a \theta R +\partial_b\partial_a \theta\partial_c R +\partial_a\partial_c \theta \partial_b R\\
& \quad +\partial_a \theta \partial_b\partial_c R +   \partial_b\partial_c\theta  \partial_{a} R   +  \partial_b\theta \partial_{a}\partial_c R +   \partial_c \theta\partial_b\partial_{a} R. \\
Y^{a,b,c}_2 & = \partial_a \theta\partial_b \theta\partial_c\theta R.
\end{split}\]
Here, we assume
\begin{equation}
\abs{X^{a,b,c}_2} = \abs{\partial_a\partial_{b}\partial_c R} \leq C\abs{\partial_a\partial_b\partial_c S},
\label{eq:Asss4}
\end{equation}
\begin{equation}
\quad \ \ 
\abs{ Y^{a,b,c}_2} = \abs{\partial_a \theta\partial_b \theta\partial_c\theta R} \leq C\abs{\partial_a\partial_b\partial_c S},
\label{eq:Asss5}
\end{equation}
and obtain
\begin{equation}
\abs{\partial_{a}\partial_b\partial_{c} S_D(a,b,c)} \leq (C+1)(D^2+D+1+D^{-1})H(\partial_b\partial_c;b).
\label{eqdl:74f22}
\end{equation}

As a result of Assumptions (\ref{eq:Asss1},\ref{eq:Asss2},\ref{eq:Asss3},\ref{eq:Asss4},\ref{eq:Asss5}), the QMC error of the phase vocoder method is
\[O(\frac{M\log(N)^2}{N}D^2)\]
%
%Similary, for the $F$ assumption of $b^{-0.5-\e}$, the QMC phase vocoder error is
%\[O(\frac{M\log(N)^2}{N}D^2).\]
%
Last, we note that Assumptions (\ref{eq:Asss1},\ref{eq:Asss2},\ref{eq:Asss3},\ref{eq:Asss4},\ref{eq:Asss5}) are satisfied for signals of the form $s(x)= e^{i\w_0 x}$, so they do not define the empty set. We leave finding spaces of signals satisfying Assumptions (\ref{eq:Asss1},\ref{eq:Asss2},\ref{eq:Asss3},\ref{eq:Asss4},\ref{eq:Asss5}) for future work.

%Take an audio signal and show numerically (\ref{eq:Asss1},\ref{eq:Asss2},\ref{eq:Asss3},\ref{eq:Asss4},\ref{eq:Asss5}) with small $C$.

%\subsubsection{Error analysis in $L_p(\RR)$}
%
%I need to assume that the signal is not mostly silent. Namely, 
%\[\norm{s_M}_p^p \geq C \frac{M}{L}\norm{s_M}_{\infty}^p\]
%for some constant $C$. This is a reasonable assumption in audio and image processing. All of the normalizations now work well.

\bibliographystyle{plain}	
%\vspace{-6.5mm}
\bibliography{ref1,ref2}

\appendix

\section{LTFT discretization}
\label{LTFT discretization}

We consider the following discretization of the LTFT, which generalizes the discretization of \cite{Ours1,Ours2}. For the general class of continuous signals we consider the Paley-Wiener space $\cS=PW(L)$ of signals $s\in L^2(\RR)$ with frequency support ${\rm supp}(\hs)\subset [0,L]$, where \textcolor{black}{$L/2>0$} is called the \emph{sample-rate}  of the signal. The discretization of $PW(L)$ is a sequence of finite dimensional subspaces $\{\cS_M\}_{M\in\ZZ}$ of $L^2(\RR)$, of dimension ${\rm dim}(\cS_M)=M$ for each $M$, where signals $s_M\in \cS_M$ have time supports ${\rm supp}(s_M)\subset [-M/L,M/L]$.
The spaces $\cS_M$ are chosen such that for any $s\in PW(L)$, there is a sequence of discrete signals $s_M\in \cS_M$ such that $\lim_{M\rightarrow\infty}\norm{s_M-s}=0$. Moreover, we choose the spaces $\cS_M$ to have most of the energy of $\hat{s}_M$, for any $s_M\in \cS_M$, concentrated about the band $[-L,L]$.
% $\norm{s_M-s}_2\xrightarrow[ M \to \infty]{}0$.
For example, in \cite{Ours1,Ours2}, $\cS_M$ is the space of trigonometric polynomials of order $M$ in $L^2[-M/L,M/L]$. We can also take, for example, $\cS_M$ as a space of linear splines supported in $[-M/L,M/L]$ with $M$ nodes at equidistant locations.

The LTFT of any signal $s_M\in \cS_M$ has most of its energy localized in phase space about the compact domain
\[G_M= [-M/L-S_0,M/L+S_0]\times [0,L]\times [0,1].\]
Indeed, most of the energy of $\hat{s}_M\in \cS_M$ is  concentrated about the band $[-L,L]$, the time support of $s_M$ are in $[-M/L,M/L]$, and the maximal support of LTFT atoms in $S_0$. 
This claim was rigorously formulated and proved in \cite{Ours2}. We hence restrict the phase space of the LTFT to $G_M$, calling the restricted system LTFT$^M$. We denote the synthesis operator of LTFT$^M$ by $V_f^{* M}$, namely,
\[V_f^{* M}F= \iint_{G_M} F(a,b,c)f_{a,b,c} dadbdc.\]

\subsection{The frame operator of discrete LTFT}

In \cite{Ours2} the frame operator of the LTFT was constructed. Here, even though the LTFT$^M$ system is not a frame on $L^2(\RR)$, \textcolor{black}{it is still customary to use} the term \emph{frame operator} for $S_f^M=V_f^{*M}V_f$. Next, we formulate the frame operator with respect to our parametrization of the LTFT, on the compact domain $G_M$ in phase space, and show how to efficiently compute it. Denote by $\mathbf{1}_Z$ the indicator function of the set $Z$.

\begin{proposition}[The frame operator of LTFT$^M$]
\label{LTFTM_Sf}
Let
\begin{equation}
P_0=\abs{\hf\big(\frac{\gamma}{b_0}(\cdot)\big)}^2 * \mathbf{1}_{[0,\frac{\xi b_0}{\gamma}]}, \quad 
 P_1= \frac{\gamma}{\xi}\abs{\hf\Big((\cdot) - \gamma \Big) }^2 \mathbf{1}_{[0,\xi]} , \quad
 P_2 = \frac{\gamma^2}{b_1^2\xi}\ \abs{\hf\big(\frac{\gamma}{b_1}(\cdot)\big)}^2 * \mathbf{1}_{[0,\frac{\xi b_1}{\gamma}]},
    \label{EQ:86}
\end{equation}
\begin{equation}
Q_0=P_0*\mathbf{1}_{[0,b_0]} , \quad
 Q_2= P_2*\mathbf{1}_{[b_1,L]},
    \label{EQ:5n6s}
\end{equation}
and $Q_1:\RR\rightarrow\RR$ be defined by
\begin{equation}
  Q_1(\w) = \int_{\frac{\gamma \w}{b_1}}^{\frac{\gamma \w}{b_0}} \frac{1}{q}P_1(q) dq.
  \label{S_f_CWT0}
\end{equation}
Let $H=Q_0+Q_1+Q_2$. 
The frame operator $S_f^M=V_f^{*M}V_f$ of LTFT$^M$ is given by
\[\cF S_f^M\cF^{*}\hs(\w) = H(\w)\hs(\w).\]
\end{proposition}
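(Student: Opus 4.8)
The plan is to establish the identity for discrete signals \(s\in\cS_M\) (signals supported in \([-M/L,M/L]\)), which is all that is needed for the frame operator of LTFT\(^M\), by turning the phase-space integral defining \(S_f^M\) into a Fourier multiplier and then evaluating its symbol band by band. First I would record that every LTFT atom \(f_{a,b,c}\) is supported in an interval of length at most \(S_0=\gamma/b_0\) centred at \(a\): this is immediate from (\ref{eq:LTFT_atom}), since \(f\) is supported in \((-\tfrac12,\tfrac12)\) and the three scalings are \(\tfrac{\gamma}{b_0}\ge\tfrac{\gamma}{b}\ge\tfrac{\gamma}{b_1}\). Hence for \(s\in\cS_M\) we have \(\ip{s}{f_{a,b,c}}=0\) whenever \(a\notin[-M/L-S_0,M/L+S_0]\), so truncating the \(a\)-integration to the compact interval appearing in \(G_M\) loses nothing and \(S_f^M s=\iint_{\RR\times[0,L]\times[0,1]}\ip{s}{f_{a,b,c}}f_{a,b,c}\,da\,db\,dc\), now over the whole translation line. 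Writing \(f_{a,b,c}=\mathcal{T}(a)g_{b,c}\) with \(g_{b,c}=\mathcal{M}(\mu)\mathcal{D}(\sigma)f\) a modulated dilate of \(f\) read off from (\ref{eq:LTFT_atomOP}), the inner operator \(s\mapsto\int_{\RR}\ip{s}{\mathcal{T}(a)g_{b,c}}\mathcal{T}(a)g_{b,c}\,da\) is convolution of \(s\) with \(g_{b,c}\ast g_{b,c}^{\vee}\), where \(g^{\vee}(t)=\overline{g(-t)}\); by Plancherel this is exactly the Fourier multiplier with symbol \(\abs{\widehat{g_{b,c}}(\w)}^{2}\). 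Interchanging the \(a\)-, \(b\)-, \(c\)- and signal-domain integrations (justified by absolute convergence, using that \(f\) is compactly supported and the \(a\)-support is bounded), \(\cF S_f^M\cF^{*}\) is therefore the multiplier with symbol \(H(\w)=\int_0^1\!\int_0^L\abs{\widehat{g_{b,c}}(\w)}^2\,db\,dc\), and it remains to identify the three band contributions with \(Q_0\), \(Q_1\), \(Q_2\).

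For each band I would compute \(\widehat{g_{b,c}}\) from Lemma \ref{Transform_lemma}: it is a translated and rescaled copy of \(\widehat f\), translated by the modulation frequency and rescaled by the reciprocal dilation parameter. In the low-frequency STFT band \(b\in[0,b_0]\) the scale is fixed, \(\abs{\widehat{g_{b,c}}(\w)}^2=\tfrac{\gamma}{b_0}\bigl|\widehat f\bigl(\tfrac{\gamma}{b_0}(\w-\tfrac{\xi}{\gamma}cb_0-b)\bigr)\bigr|^2\); the substitution \(u=\tfrac{\xi}{\gamma}cb_0\) turns the \(c\)-integral into a convolution of \(\abs{\widehat f(\tfrac{\gamma}{b_0}(\cdot))}^2\) with \(\mathbf{1}_{[0,\xi b_0/\gamma]}\) (this is \(P_0\)), and the remaining \(b\)-integral over \([0,b_0]\) is a convolution with \(\mathbf{1}_{[0,b_0]}\), producing \(Q_0=P_0\ast\mathbf{1}_{[0,b_0]}\). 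The high-frequency STFT band \(b\in[b_1,L]\) is identical with \(b_0\) replaced by \(b_1\) and the \(b\)-integral running over \([b_1,L]\), producing \(Q_2=P_2\ast\mathbf{1}_{[b_1,L]}\). In the CWT band \(b\in(b_0,b_1)\) the key algebraic identity is \(\tfrac{\gamma}{b}\bigl(\w-(\tfrac{\xi}{\gamma}c+1)b\bigr)=\tfrac{\gamma\w}{b}-\gamma-\xi c\), so \(\abs{\widehat{g_{b,c}}(\w)}^2=\tfrac{\gamma}{b}\bigl|\widehat f(\tfrac{\gamma\w}{b}-\gamma-\xi c)\bigr|^2\); the substitution \(q=\gamma\w/b\) converts \(\int_{b_0}^{b_1}\tfrac{\gamma}{b}(\cdots)\,db\) into \(\int_{\gamma\w/b_1}^{\gamma\w/b_0}\tfrac1q(\cdots)\,dq\) — precisely the outer integral defining \(Q_1\) in (\ref{S_f_CWT0}) — while the \(c\)-integral, after \(w=\xi c\), becomes a convolution of \(\abs{\widehat f(\cdot-\gamma)}^2\) with \(\mathbf{1}_{[0,\xi]}\), i.e. \(P_1\). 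Summing the three bands gives \(H=Q_0+Q_1+Q_2\), hence \(\cF S_f^M\cF^{*}\hat s=H\hat s\).

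The main obstacle is the rigorous treatment of the step "the \(a\)-integral is a Fourier multiplier": the synthesis operator is a weak vector (Pettis) integral in the sense of (\ref{eq:a2}), and one must check that it commutes with the \(b,c\) integrations and with the Fourier transform, which reduces to joint integrability of \((a,b,c,t)\mapsto s(t)\overline{f_{a,b,c}(t)}\,f_{a,b,c}(x)\). This is harmless here — \(f\) is smooth and compactly supported, the relevant \(a\)-range is an interval of length \(\mathcal{O}(M/L)\), and \(\iint\|\widehat{g_{b,c}}\|_\infty\,db\,dc<\infty\) since \(\|\widehat{g_{b,c}}\|_\infty\sim(\gamma/b)^{1/2}\|\widehat f\|_\infty\) on the CWT band — but it should be spelled out. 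The remaining work is bookkeeping: carrying the scaling constants \(\gamma/b_j\), the Jacobians of the two substitutions, and the band endpoints consistently through all three cases so that exactly \(P_0,P_1,P_2\) of (\ref{EQ:86}) and \(Q_0,Q_1,Q_2\) of (\ref{EQ:5n6s})–(\ref{S_f_CWT0}) appear. Finally, I would state explicitly that the identity is asserted for \(s\in\cS_M\): this is the only place the restriction of \(G_M\) to a compact \(a\)-interval is used, via the support argument of the first paragraph, and for general \(s\in L^2(\RR)\) the truncated synthesis is no longer translation-covariant.
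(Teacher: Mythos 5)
Your proposal is correct and takes essentially the same route as the paper's own proof: both reduce the translation ($a$-)integral to a Fourier multiplier with symbol $\abs{\hf_{0,b,c}(\w)}^2$ (you phrase this as convolution with $g_{b,c}\ast g_{b,c}^{\vee}$, the paper computes it directly), and then evaluate $\int_0^1\int_0^L\abs{\hf_{0,b,c}(\w)}^2\,db\,dc$ band by band using exactly the same substitutions to obtain $Q_0$, $Q_1$, $Q_2$. The only substantive addition is your explicit support argument justifying the extension of the $a$-integral from the compact interval in $G_M$ to all of $\RR$ for $s\in\cS_M$, which the paper leaves implicit; do note in your ``bookkeeping'' pass that the normalizing constant $\frac{\gamma^2}{b_0^2\xi}$ produced by the $c$-substitution should accompany $P_0$ just as $\frac{\gamma^2}{b_1^2\xi}$ accompanies $P_2$.
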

\textcolor{black}{The proof of this proposition is given below.}
Note that all functions $P_j,Q_j$, $j=1,2,3$, of (\ref{EQ:86})--(\ref{S_f_CWT0}) can be computed in $O(M\log(M))$ operations, where $M$ is the number of frequency samples in the discrete computation. For (\ref{S_f_CWT0}), all values  $Q_1(\w)$ can be computed by the values of $P_1$  in $O(M)$ operations. Indeed, the integration in (\ref{S_f_CWT0}) for one value of $\w$ in the grid, can be computed using the value of (\ref{S_f_CWT0}) on a neighboring $\w'$, with the addition and subtraction of $O(1)$ values due to the difference in integration domains. Thus, the overall computational complexity for computing $H$ is $O(M\log(M))$, and $S_f$ can be computed in pre-processing once and for all.

\textcolor{black}{To prove of Proposition \ref{LTFTM_Sf}, we start with a lemma.}
 Denote by $\hat{\tau}$ the frequency representation of $\tau$, namely,
\[\hat{\tau}(a,b,c) = \cF\tau(a,b,c) \cF^*.\]
\begin{lemma}[Frequency representation of the LTFT]
Let $f_{a,b,c}$ be the LTFT atoms. Then
\begin{equation}
\cF f_{a,b,c}(\w)=\hat{f}_{a,b,c}(x)  = [\hat{\tau}(a,b,c)\hat{f}](\w)
= \left\{
\begin{array}{ccc}
	\sqrt{\frac{\gamma}{b_0}}e^{-2\pi i a \w}\hf\big(\frac{\gamma}{b_0} (\w-\frac{\xi}{\gamma}c b_0-b) \big) & {\rm if} & b<b_0 \\
	\sqrt{\frac{\gamma}{b}}e^{-2\pi i a \w}\hf\Big(\frac{\gamma}{b} \big(\w-(\frac{\xi}{\gamma}c +1)b\big) \Big)   & {\rm if} & b_0<b<b_1 \\
\sqrt{\frac{\gamma}{b_1}}e^{-2\pi i a \w}\hf\big(\frac{\gamma}{b_1} (\w-\frac{\xi}{\gamma}c b_1-b) \big)  & {\rm if} &  b>b_1.
\end{array}
\right.
\label{eq:LTFT_atom_F}
\end{equation}
\end{lemma}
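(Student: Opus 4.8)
The plan is to obtain the frequency-side formula as a purely operator-theoretic consequence of the factorization of $\tau(a,b,c)$ in (\ref{eq:LTFT_atomOP}) together with the Fourier conjugation rules of Lemma \ref{Transform_lemma}, and then to read off the explicit pointwise identity by substituting the definitions of translation, modulation and dilation. First I would note that, since $\cF$ is linear and $f_{a,b,c}=\tau(a,b,c)f$, we have $\cF f_{a,b,c}=\hat\tau(a,b,c)\hf$ with $\hat\tau(a,b,c)=\cF\tau(a,b,c)\cF^*$, so it suffices to compute $\hat\tau(a,b,c)$. Writing $\tau(a,b,c)=\mathcal{T}(a)\mathcal{M}(\mu)\mathcal{D}(\delta)$ as in (\ref{eq:LTFT_atomOP}), where in the three regimes $b<b_0$, $b_0<b<b_1$, $b>b_1$ the pair $(\mu,\delta)$ equals $(\tfrac{\xi}{\gamma}cb_0+b,\ \tfrac{\gamma}{b_0})$, $((\tfrac{\xi}{\gamma}c+1)b,\ \tfrac{\gamma}{b})$, $(\tfrac{\xi}{\gamma}cb_1+b,\ \tfrac{\gamma}{b_1})$ respectively, and inserting $\cF^*\cF=I$ between the factors (conjugation by the fixed unitary $\cF$ is multiplicative), I get
\[\hat\tau(a,b,c)=\big(\cF\mathcal{T}(a)\cF^*\big)\big(\cF\mathcal{M}(\mu)\cF^*\big)\big(\cF\mathcal{D}(\delta)\cF^*\big)=\mathcal{M}(-a)\,\mathcal{T}(\mu)\,\mathcal{D}(\delta^{-1}),\]
by parts 1--3 of Lemma \ref{Transform_lemma}.

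Next I would expand this pointwise. By the definition of dilation, $[\mathcal{D}(\delta^{-1})\hf](\w)=\delta^{1/2}\hf(\delta\w)$; applying $\mathcal{T}(\mu)$ replaces $\w$ by $\w-\mu$; and applying $\mathcal{M}(-a)$ multiplies by $e^{-2\pi i a\w}$. Hence $[\hat\tau(a,b,c)\hf](\w)=\delta^{1/2}e^{-2\pi i a\w}\hf\big(\delta(\w-\mu)\big)$. Substituting the three pairs $(\mu,\delta)$ listed above — for instance $\delta^{1/2}=\sqrt{\gamma/b}$ and $\delta=\gamma/b$ in the middle regime — reproduces exactly the three cases of (\ref{eq:LTFT_atom_F}). (One could alternatively verify this by a direct computation of $\cF f_{a,b,c}$ from (\ref{eq:LTFT_atom}) using $\cF s(\w)=\int s(t)e^{-2\pi i\w t}\,dt$ and a change of variables, but the operator route is shorter and less error-prone.)

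Since every step is an algebraic identity, there is no analytic obstacle here; the only thing requiring care is bookkeeping. One must keep straight the sign in $\cF\mathcal{T}(a)\cF^*=\mathcal{M}(-a)$ and the inversion in $\cF\mathcal{D}(\delta)\cF^*=\mathcal{D}(\delta^{-1})$, track the $\delta^{1/2}$ normalization factor correctly, and note that the ordering of the three operator factors is preserved under conjugation. It is also worth remarking that nothing about $f$ is used beyond $f\in L^2(\RR)$, and that continuity of $f_{a,b,c}$ in $b$ at $b_0$ and $b_1$ makes the case split innocuous.
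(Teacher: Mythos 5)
Your proposal is correct and follows the paper's own route exactly: conjugate the factorization $\tau(a,b,c)=\mathcal{T}(a)\mathcal{M}(\mu)\mathcal{D}(\delta)$ from (\ref{eq:LTFT_atomOP}) by $\cF$ using Lemma \ref{Transform_lemma} to get $\hat\tau(a,b,c)=\mathcal{M}(-a)\mathcal{T}(\mu)\mathcal{D}(\delta^{-1})$, then read off the pointwise formula. The only difference is that you spell out the final pointwise expansion, which the paper leaves implicit.
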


\begin{proof}
By (\ref{eq:LTFT_atomOP}) and Lemma \ref{Transform_lemma}
\begin{equation}
 \hat{\tau}(a,b,c)
= \left\{
\begin{array}{ccc}
	\mathcal{M}(-a)\mathcal{T}(\frac{\xi}{\gamma}c b_0+b)\mathcal{D}(\frac{b_0}{\gamma}) & {\rm if} & b<b_0 \\
	  \mathcal{M}(-a)\mathcal{T}\big((\frac{\xi}{\gamma}c +1)b\big)\mathcal{D}(\frac{b}{\gamma}) & {\rm if} & b_0<b<b_1 \\
\mathcal{M}(-a)\mathcal{T}(\frac{\xi}{\gamma}c b_1+b)\mathcal{D}(\frac{b_1}{\gamma})  & {\rm if} &  b>b_1,
\end{array}
\right.
\label{eq:LTFT_atom_F_OP}
\end{equation}
which gives (\ref{eq:LTFT_atom_F}).
\end{proof}

\begin{proof}[Proof of Proposition \ref{LTFTM_Sf}]

\textcolor{black}{We define the function
\begin{equation}
    \label{eq:F01}
    F(b,c,\cdot):= \cF \Big(\int_a V_f[s] f_{a,b,c}  da\Big),
\end{equation}
and note that by (\ref{eq:M_Sn})
\begin{equation}
  \cF V_f^{*M}V_f[s](\w) = \int_0^1\int_0^L F(b,c,\w) dbdc,
  \label{EQ_S_fall0}
\end{equation}}
\textcolor{black}{
In all cases of (\ref{eq:LTFT_atom_F}) and (\ref{eq:LTFT_atomOP}), by Definition \ref{def:TMD} and Lemma \ref{Transform_lemma}, the LTFT atom can be written as
\begin{equation}
    \label{eq:LTFT_M}
    \hat{f}_{a,b,c}(\w) = \mathcal{M}(-a)\hf_{0,b,c}(\w) = e^{-2\pi i a \w} \hf_{0,b,c}(\w),
\end{equation}
\begin{equation}
    \label{eq:LTFT_M22}
    f_{a,b,c}(x) = \mathcal{T}(a)f_{0,b,c}(x) = f_{0,b,c}(x-a).
\end{equation}
Hence, by (\ref{eq:M_An}) and Lemma \ref{Transform_lemma}, we have
\[
\begin{split}
   F(b,c,\w) &  =  \int_a \int_x s(x)\overline{ f_{0,b,c}(x-a)} dx e^{-2\pi i a \w} \hf_{0,b,c}(\w) da \\
   & = \int_x s(x) \int_a \overline{ f_{0,b,c}(x-a)}e^{-2\pi i a \w} da dx  \hf_{0,b,c}(\w)  \\
   & = \int_x s(x)  \overline{\int_a \mathcal{T}_x f_{0,b,c}(-a)e^{2\pi i a \w} da }dx  \hf_{0,b,c}(\w) \\
   & = \int_x s(x)  \overline{\mathcal{D}(x) \hf_{0,b,c}(\w)}dx  \hf_{0,b,c}(\w) \\
   & = \int_x s(x)  e^{-2\pi i x \w}dx  \abs{\hf_{0,b,c}(\w)}^2  =\hs(\w)  \abs{\hf_{0,b,c}(\w)}^2 .
\end{split}
\]}

Let us split (\ref{EQ_S_fall0}) to the three sub-domains in phase space
\begin{equation}
\begin{split}
  &\cF V_f^{*M}V_f(s)(\w) = \\
   &\Big(\int_0^1\int_0^{b_0} F_0(b,c,\w) dbdc + \int_0^1\int_{b_0}^{b_1} F_1(b,c,\w) dbdc + \int_0^1\int_{b_1}^L F_2(b,c,\w) dbdc\Big)\hat{s}(\w),
  \end{split}
  \label{EQ_S_fall01}
\end{equation}
with $F_0,F_1,F_2$ the restrictions of $F$ to $b\in[0,b_0]$, $b\in (b_0,b_1)$, and $b\in [b_1,L]$ respectively.
We study separately the three components of (\ref{EQ_S_fall01}).

For low frequencies,
\[\hf_{0,b,c}(\w) = \sqrt{\frac{\gamma}{b_0}}e^{-2\pi i a \w}\hf\big(\frac{\gamma}{b_0} (\w-\frac{\xi}{\gamma}c b_0-b) \big) \]
so
\[\int_0^1 \abs{\hf_{0,b,c}(\w)}^2 dc= \int_0^1  \frac{\gamma}{b_0}\abs{\hf\big( \frac{\gamma}{b_0}\w-\xi c -\frac{\gamma}{b_0} b \big)}^2 dc.\]
By changing variable $\xi c = \frac{\gamma}{b_0} z$ we have
\[\begin{split}
    \int_0^1 \abs{\hf_{0,b,c}(\w)}^2 dc= & \frac{\gamma^2}{b_0^2\xi}\int_0^{\frac{\xi b_0}{\gamma}}  \abs{\hf\big( \frac{\gamma}{b_0}(\w-b- z)  \big)}^2 dz\\
    = & \frac{\gamma^2}{b_0^2\xi}\Big[\abs{\hf\big(\frac{\gamma}{b_0}(\cdot)\big)}^2 * \mathbf{1}_{[0,\frac{\xi b_0}{\gamma}]}\Big](\w-b) =: P_0(\w-b).
\end{split}
\]
Thus
\[Q_0(\w):=\int_0^1\int_0^{b_0} F_0(b,c,\w) dbdc
    =\int_0^{b_0} P_0(\w-b)db = [P_0*\mathbf{1}_{[0,b_0]}](\w).
\]
Similarly, we derive $P_2$ and $Q_2$ of (\ref{EQ:86}) and  (\ref{EQ:5n6s}).

Last, for middle frequencies we have
\[\hf_{0,b,c}(\w) =  \sqrt{\frac{\gamma}{b}}\hf\Big(\frac{\gamma}{b} \big(\w-(\frac{\xi}{\gamma}c +1)b\big) \Big). \]
We compute
\[\int_0^1 \int_{b_0}^{b_1}\abs{\hf_{0,b,c}(\w) }^2  db dc=  \int_0^1 \int_{b_0}^{b_1} \frac{\gamma}{b}\abs{\hf\Big( \frac{\gamma\w}{b}-\xi c -\gamma\big) \Big)}^2 db dc.\]
By the change of variable $\frac{\gamma \w}{b}=q$, $db = -\frac{\gamma\w}{q^2}dq$, we have 
\[\int_0^1 \int_{b_0}^{b_1}\abs{\hf_{0,b,c}(\w) }^2  db dc=  \int_0^1 \int_{\frac{\gamma \w}{b_1}}^{\frac{\gamma \w}{b_0}} \frac{\gamma}{q}\abs{\hf\Big( q-\xi c -\gamma \Big)}^2 db dc.\]
Moreover, by changing variable $\xi c= z$ we define 
\[\int_0^{\xi} \frac{\gamma}{\xi} \abs{\hf\Big( q-z -\gamma\Big) }^2 dz = \Big[\frac{\gamma}{\xi}\abs{\hf\Big((\cdot) - \gamma \Big) }^2 \mathbf{1}_{[0,\xi]}\Big](q) = : P_1(q).\]
Hence, we define (\ref{S_f_CWT0}) by
\[
  Q_1(\w) = \int_{\frac{\gamma \w}{b_1}}^{\frac{\gamma \w}{b_0}} \frac{1}{q}P_1(q) dq
\]
%Note that all values $Q_1(\w)$ of (\ref{S_f_CWT0}), for $\w$ in a grid of $M$ points in the frequency domain, can be computed by the values of $P_1$  in $O(M)$ operations. Indeed, the integration in (\ref{S_f_CWT0}) for one value of $\w$ in the grid, can be computed using the value of (\ref{S_f_CWT0}) on a neighboring $\w'$, with the addition and subtraction of $O(1)$ values due to the difference in integration domains. Since also $P_1$ can be computed in $O(M\log(M))$ operations, the overall complexity of computing $Q_1$ is $O(M\log(M))$.
%
which means that the frame operator $S_f^M=V_f^{*M}V_f$ is given by
\[\cF S_f^M\cF^{*}\hs(\w) = \big(Q_0(\w)+Q_1(\w)+Q_2(\w)\big)\hs(\w).\]

\end{proof}

\section{Discrepancy of DWT grids}
\label{Discrepancy of DWT grids}

In this appendix we compute the discrepancy of the discrete wavelet transform (DWT) grid. We note that the discrepancy of the standard grid of STFT is well-known to be $O(N^{-1/2})$, where $N$ is the number of grid points.

\subsection{A DWT construction}
\label{A DWT construction}

Consider the following setting, similar to Morlet wavelets \cite{wavelet_tour}. Let $M$ be the resolution of the discrete signals,  supported in the time interval $[-M/L,M/L]$ and in the frequency interval $[0,L]$. Consider a window $f$, with $\hf$ centered about $\w=0$. Suppose that $\hf$ is concentrated on the interval $(-0.5,0.5)$. Denote the number of oscillations in the mother wavelet by $\gamma$, and define the mother wavelet $h(x)=e^{2\pi i\gamma x}f(x)$. Suppose that $h$ is admissible (satisfying (\ref{eq:addmiss})). In the DWT grid we consider dilation samples of the form $\{r^k\}_{k\in\ZZ}$, for some $r>1$.
To guarantee that the wavelet transform can be stably reconstructed,  we require that $\abs{\hh(\w)}^2$ and $\abs{\hh(r^{-1}\w)}^2$ are concentrated on intersecting intervals \cite{wavelet_tour}. Thus, we demand
\[\big(\gamma-0.5,\gamma+0.5\big)\cap \big((\gamma-0.5)r,(\gamma+0.5)r\big) \neq \emptyset.\]
%or $(\gamma-0.5)r<\gamma+0.5$.
 Namely,
$1<r<\frac{\gamma+0.5}{\gamma-0.5}$. 
We thus consider $r$ of the form
\[1<r=\Big(\frac{\gamma+0.5}{\gamma-0.5}\Big)^q\]
with $0<q<1$.

For each sample scale $r^k$, we consider $r^k\frac{M}{L}p$ time samples in a uniform grid, where $p>0$ is a constant that controls the time spacing. 

\subsection{DWT sample set size}
\label{DWT sample set size}

Let us estimate the size of the DWT grid. Suppose we wish to represent the signal in the frequency band $[b_0,L]$. The largest dilation in the discrete transform is the smallest $K_1$ satisfying
\[r^{K_1}(\gamma-0.5)\geq L,\]
or
\[K_1\ln(r)\geq \ln(\frac{L}{\gamma-0.5}).\]
This guarantees that the whole frequency interval $[b_0,L]$ is covered by discrete wavelets. 
For the asymptotic analysis, we assume without loss of generality equality
\[K_1 =  \frac{\ln(\frac{L}{\gamma-0.5})}{\ln(r)}.\]
Similarly, the smallest $k$ in the transform is
\[K_0 =  \frac{\ln(\frac{b_0}{\gamma+0.5})}{\ln(r)}.\]

The number of time samples for each $k$ is $r^k\frac{M}{L}p$. Let us estimate the total number of sample points by the continuous integral
\[N=\int_{K_0}^{K_1} r^k\frac{M}{L}p dk= \frac{M}{L\ln(r)}p (r^{K_1}-r^{K_0})\]
\[=\frac{Mp}{qL\ln\big(\frac{\gamma+0.5}{\gamma-0.5}\big)} \Big(\frac{L}{\gamma-0.5}-\frac{b_0}{\gamma+0.5}\Big)\]
\begin{equation}
\label{DWT_N_est}
    \geq H(\gamma,p,q) (L-b_0),
\end{equation}
where
\[H(\gamma,p,q) = \frac{Mp}{qL\ln\big(\frac{\gamma+0.5}{\gamma-0.5}\big)}\frac{1}{\gamma+0.5}.\]

To allow the DWT grid to become finer, we consider any combination of $M\rightarrow\infty$, $p\rightarrow\infty$,  $q\rightarrow 0$, and fixed $\gamma$. Equivalently, we may consider varying $M$, $r$ and $p$. By (\ref{DWT_N_est}) and by the fact that $q$ is proportional to $\ln(r)$, if $L\gg b_0$, we have approximately,
\begin{equation}
N\approx C\frac{Mp}{\ln(r)},
    \label{DWT_DISC1}
\end{equation}
for some constant $C$ that depends on $\gamma$.
We denote the resulting DWT grid, having $N$ sample points, by $\mathcal{Q}_{N;p,r}$. \textcolor{black}{In Figure \ref{fig:DWT_g} we compare $\mathcal{Q}_{N;p,r}$ for different choices of $p$ and $r$.} %$\mathcal{Q}_{N;q,r}$. 
\begin{figure}[!ht]
\centering
\includegraphics[width=0.8\linewidth]{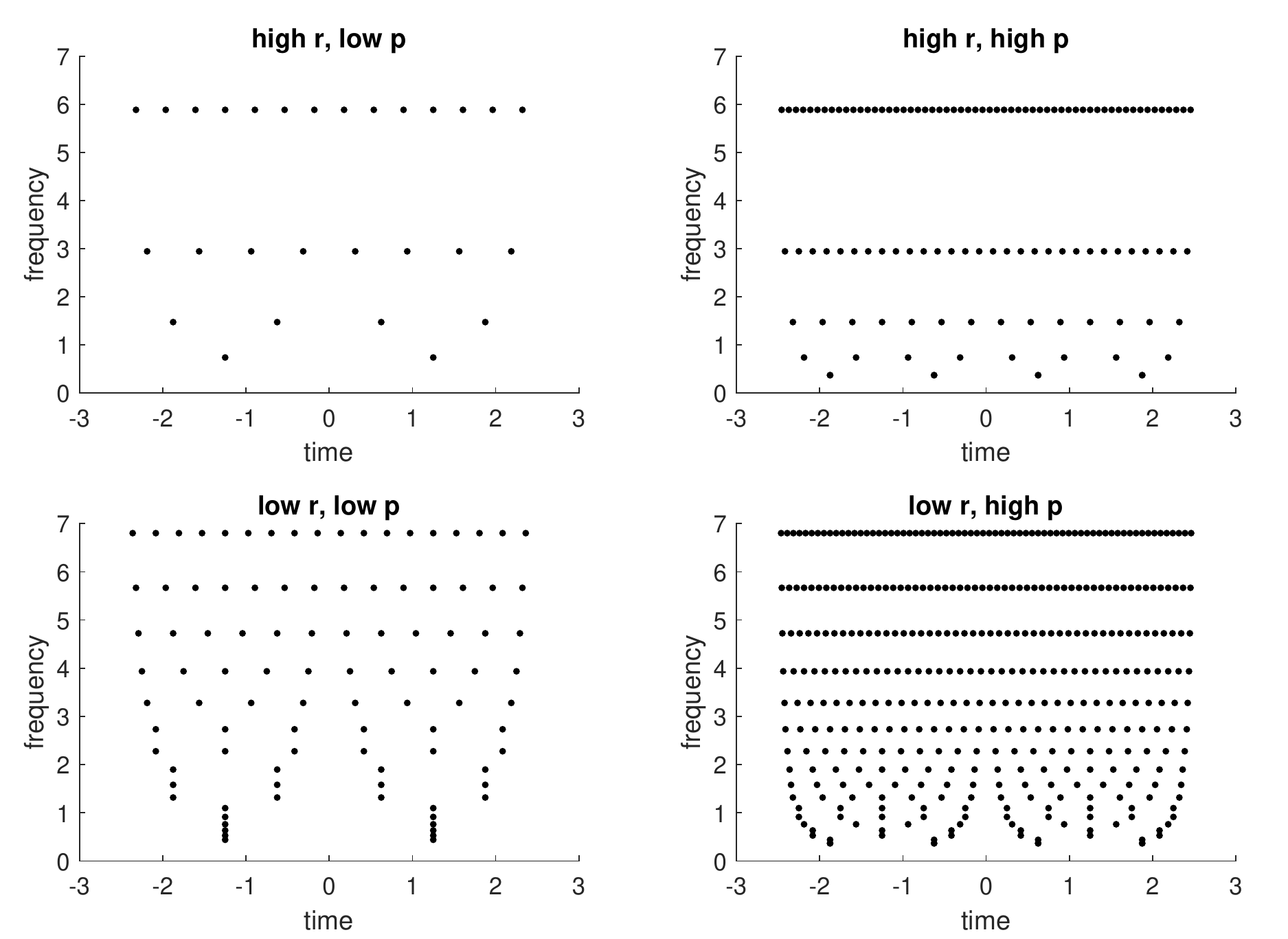}
\caption{\textcolor{black}{Comparison of different DWT grids $\mathcal{Q}_{N;p,r}$. The values $r^k$ are plotted along the frequency axis. For all four grids, we set $L=4,$ $\gamma=1,$ $b_0=0.5$ and $M=5$.
Top left: $r=2$ and $p=1$.  
Top right: $r=2$ and $p=4$. 
Bottom left: $r=1.2$ and $p=1$. 
Bottom right: $r=1.2$ and $p=4$. }
}
\label{fig:DWT_g}
\end{figure}

%\textcolor{blue}{Write in def environment so I can refer to it.}

%$ $
%
%For typical audio signals and time stretching phase vocoder with dilation $D$, a reasonable choice is $\gamma=10$ or greater, and $q=0.5D^{-1}$. In this situation, for $D=2$, we take $r=1.025$, and $p=2D=4$.
%For a one second signal with $L=44\cdot 10^3$, we take $N=1.4\cdot 10^6$ sample points.
%
%$ $

\subsection{DWT sample set discrepancy}

In this subsection we prove that the discrepancy of the DWT grid is sub-optimal.
In the following analysis we omit constants that are unchanged when the DWT grid becomes finer, e.g., $\gamma$. This does not affect the asymptotic analysis of the discrepancy bound.

\begin{claim}
Consider the DWT grid as defined in Subsections \ref{A DWT construction} and \ref{DWT sample set size}.  %\textcolor{blue}{Refer to a definition I will write for the grid}. 
Then, for large enough $q$, the star discrepancy of the DWT grid satisfies
\[D_N^*\mathcal{Q}_{N;q,r} \geq C'\frac{1}{\sqrt{N}}\]
for some $C'$ that does not depend on $N,q$ and $r$.
\end{claim}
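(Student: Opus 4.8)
The plan is to exhibit an origin‑anchored rectangle (or a short list of them) for which the gap between empirical and Lebesgue measure is bounded below by a positive constant, uniformly over all admissible grids; since $\sqrt N\ge 1$ this immediately yields the claimed $C'/\sqrt N$ lower bound.

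First I would reduce to a clean geometric picture. Recall from Subsections~\ref{A DWT construction}--\ref{DWT sample set size} that $\mathcal{Q}_{N;q,r}$ is a disjoint union of $K:=K_1-K_0+1$ horizontal \emph{rows}: for each scale $k=K_0,\ldots,K_1$, row $k$ consists of $n_k=r^{k}\tfrac{M}{L}p$ points equispaced along the time axis, all sitting at the single frequency value $\propto\gamma r^{k}$. After the affine rescaling of the time--frequency rectangle onto $[0,1]^2$ (the map $\psi$ used to define $\tilde{\mathcal{P}}_N$), row $k$ becomes a horizontal string of $n_k$ points at a common rescaled height $y_k\in(0,1]$, with $y_{K_0}<\cdots<y_{K_1}=:y^{\ast}$. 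The only quantitative input I need is that the top row is heavy: from the geometric‑series bound $\sum_{k=K_0}^{K_1-1}n_k<n_{K_1}/(r-1)$ one gets $N<n_{K_1}\bigl(1+\tfrac1{r-1}\bigr)$, i.e.\ $\beta:=\tfrac{N-n_{K_1}}{N}<\tfrac1r$. This is where the hypothesis ``$q$ large enough'' is used: fixing a threshold $q_0>0$ forces $r=\bigl(\tfrac{\gamma+0.5}{\gamma-0.5}\bigr)^{q}\ge r_0:=\bigl(\tfrac{\gamma+0.5}{\gamma-0.5}\bigr)^{q_0}>1$ for all admissible $q$, hence $\beta\le 1/r_0<1$.

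Next I would lower‑bound the star discrepancy with the origin‑anchored boxes $B^{\pm}_{\epsilon}=[0,1)\times[0,\,y^{\ast}\pm\epsilon)$ for small $\epsilon>0$. For $\epsilon$ small enough that $y^{\ast}-\epsilon$ sits above the second‑highest row, $B^{-}_{\epsilon}$ captures exactly the $N-n_{K_1}$ points below the top row, giving $D_N^{\ast}\ge|\beta-(y^{\ast}-\epsilon)|$; and, when $y^{\ast}<1$, $B^{+}_{\epsilon}$ captures all $N$ points, giving $D_N^{\ast}\ge 1-y^{\ast}-\epsilon$. Letting $\epsilon\to0^{+}$ and combining yields $D_N^{\ast}\ge\max\bigl(|\beta-y^{\ast}|,\,1-y^{\ast}\bigr)$, and a one‑line case split on whether $y^{\ast}\lessgtr\tfrac{1+\beta}{2}$ shows this is $\ge\tfrac{1-\beta}{2}$; if instead $y^{\ast}=1$ then $B^{-}_{\epsilon}$ alone already gives $D_N^{\ast}\ge 1-\beta-\epsilon$. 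Either way $D_N^{\ast}(\mathcal{Q}_{N;q,r})\ge\tfrac{1-\beta}{2}\ge\tfrac{1-1/r_0}{2}=:C'>0$, a constant depending only on $\gamma$ (through $q_0$), and $C'\ge C'/\sqrt N$. The degenerate case $K_1=K_0$ of a single row is also covered: there $\beta=0$ and the same two boxes give $D_N^{\ast}\ge\max(y^{\ast},1-y^{\ast})\ge\tfrac12$.

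I expect the main obstacle to be bookkeeping rather than a genuine difficulty. The one point that needs care is that the highest wavelet band is centered at $\gamma r^{K_1}>L$, so I must fix once and for all which rectangle of the time--frequency plane is mapped onto $[0,1]^2$ (and hence the precise rescaled heights $y_k$ and the precise form of $y^{\ast}$), and then verify that none of the constants above depend on $M$, $p$, $N$, $q$ or $r$ beyond the fixed bound $r\ge r_0$. I would also point out that this argument in fact delivers the stronger conclusion $D_N^{\ast}=\Omega(1)$; the rate $C'/\sqrt N$ is stated only because $\Theta(1/\sqrt N)$ is the discrepancy order of the regular grid underlying the discrete STFT, which is the natural benchmark against which low‑discrepancy QMC point sets (star discrepancy $O(\log^{d-1}N/N)$) are being compared.
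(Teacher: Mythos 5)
Your argument is correct for the claim as literally stated, but it takes a genuinely different route from the paper's and, more importantly, covers a different regime. You exploit the fact that when $r$ is bounded away from $1$ the top row carries a constant fraction $1-\beta>1-1/r_0$ of all $N$ points, and you test anchored boxes just below and just above that row; this yields the stronger conclusion $D_N^*\geq\frac{1-1/r_0}{2}=\Omega(1)$. The paper instead exhibits two rectangles empty of sample points --- one filling the top frequency gap, of rescaled area $\approx v:=\ln r$, and one thin in time and tall in frequency (width equal to the time spacing within a single row), of rescaled area $\approx\frac{1}{Nv}$ --- and concludes $D_N^*\gtrsim\max\{v,\tfrac{1}{Nv}\}\geq\frac{1}{\sqrt{N}}$. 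The crucial difference is that the paper's bound is uniform in $r$: it survives the refinement regime $q\to 0$ (i.e.\ $r\to 1$) that the surrounding text explicitly targets (``we consider any combination of $M\to\infty$, $p\to\infty$, $q\to 0$''), and that regime is exactly where the rate $1/\sqrt{N}$ is attained, at $v\approx 1/\sqrt{N}$. Your lower bound degenerates to $0$ as $r\to 1$ and says nothing there, so while your proof does establish the claim under the stated hypothesis ``large enough $q$,'' it does not support the use made of the claim in the main text, where DWT grids with arbitrary (including near-$1$) ratio $r$ are declared sub-optimal relative to low-discrepancy QMC sets. To cover small $r$ you would need a second test set sensitive to the sparse time sampling within each row --- precisely the paper's thin vertical rectangle. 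Two minor points: your exact count in $[0,1)\times[0,y^*-\epsilon)$ can be off by one point per row if some rescaled time coordinates equal $1$, perturbing $\beta$ by $O(K/N)$, which is harmless; and your observation that the true bound is $\Omega(1)$ for fixed $r>1$ is accurate and usefully explains why the $1/\sqrt{N}$ rate is the right statement only because of the $r\to 1$ case.
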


\begin{proof}
We bound the discrepancy of the DWT sample set from below by constructing two rectangles that do not intersect the DWT samples. 

The first rectangle is supported in the time axis in $[-M/L,M/L]$, and in frequency it is supported in the last frequency gap, that has length of order
$L(1-r^{-1})$.
When scaling phase space to $[0,1]^2$, the area of this rectangle becomes
\[B_1=1-r^{-1}.\]
Next, we construct the second rectangle.
We take the bottom-left corner at time-frequency $(0,0)$. The top of the rectangle has  frequency coordinate $r^k\gamma$ for some $k$. %In time, the smallest spacing is for the highest freq $r^k\gamma$. 
The number of time samples of the DWT grid at frequency $k$ is $r^k\frac{M}{L}p$ over the time interval of length $\frac{M}{L}$. Hence, to guarantee that the rectangle does not intersect the CWT grid, we take the width of the rectangle as the time spacing $\frac{1}{r^k p}$. The area of this rectangle is
$\frac{r^k\gamma}{r^k p}$, which is or order of $p^{-1}$ since $\gamma$ is fixed in the asymptotic analysis of the DWT grid.
After rescaling phase space, the area is
$\frac{1}{M p}$.

Now, by (\ref{DWT_DISC1}) 
\[M\approx \frac{N\ln(r)}{p C}. \]
Ignoring constants, the area of the second rectangle, when phase space is scaled to $[0,1]^2$, is
\[B_2=\frac{1}{N \ln(r)}.\]
Denote $r=e^v$, and note that for $r$ close to $1$ (small $q$) we have the areas
\[B_1= 1- e^{-v}\approx v \]
\[B_2= \frac{1}{N v}.\]
Combining the bounds $B_1$ and $B_2$, the discrepancy of the DWT grid is bounded from below by
\[\max\{v,\frac{1}{Nv}\}=\left\{
\begin{array}{ccc}
	v & , & v\geq \frac{1}{\sqrt{N}}\\
	\frac{1}{Nv} & , & v\leq \frac{1}{\sqrt{N}}
\end{array}
\right.
\geq \frac{1}{\sqrt{N}}.\]
This shows that the discrepancy of the DWT grid is more than constant times $N^{-0.5}$.
\end{proof}

\section{Time-frequency tessellation in CWT and LTFT analysis}
\label{Time-frequency tessellation in CWT and LTFT analysis}

We present in this subsection an analogous notion to Heisenberg boxes in CWT analysis.  The essential domain covered by a wavelet time-frequency kernel $V_f[f_{a,b}]$, centered at $(a,b)$, is funnel shaped, and we thus call it a \emph{wavelet funnel}. For a fixed translation-dilation $(a',b')$, the inner product $V_f[f_{a,b}](a',b')=\ip{\mathcal{T}(a)\mathcal{D}(b)f}{\mathcal{T}(a')\mathcal{D}(b^{\prime -1})f}$ is localized for each fixed $b'$ and variable $a'$, at a time interval about $a$  of length $\frac{\gamma}{b}$, where $\gamma$ is the number of oscillations in the mother wavelet. The dilation parameters $b'$ are localized in the frequency interval $[\frac{b}{1+\gamma^{-1}},\frac{b}{1-\gamma^{-1}}]$. We thus define the wavelet funnel ${\cal W}(a,b)$ as the domain of the $(a',b')$ points which are confined between the curves 
\[a'=a\pm\frac{\gamma}{b'},\quad b'= \frac{b}{1\pm\gamma^{-1}}.\]

It is easy to see that that the characteristic function of ${\cal W}$ satisfies
\[\mathbf{1}_{{\cal W}(a,b)}(a',b') = \mathbf{1}_{{\cal H}(a',b')}(a,b),\]
where ${\cal H}(a',b')$ is the Heisenberg box centered at $(a',b')$, with time side $[a'-\frac{\gamma}{b'},a'+\frac{\gamma}{b'}]$ and frequency side $[b'-\frac{b'}{\gamma},b'+\frac{b'}{\gamma}]$. We thus call the wavelet funnels ${\cal W}$ the \emph{adjoint} of the Heisenberg boxes ${\cal H}$. 

The wavelet funnel represents the domain in which the wavelet kernel $V_f(f_{a,b})$ is concentrated. Hence, intuitively, a good CWT discretization is one for which the wavelet funnels centered at the sample points cover approximately uniformly the time-frequency plane. Namely, for a discretization  $\{(a_n,b_n)\}_{n=1}^N$, we would like to have for any $(a',b')$ in the restricted phase space $G_M$
\begin{equation}
\frac{M}{N}\sum_{n=1}^N\mathbf{1}_{{\cal W}(a_n,b_n)}(a',b')\approx C
\label{eq:rrtry}
\end{equation}
for some constant $C$ which is independent of $(a',b')$.
In the continuous limit we require
\[ \iint\mathbf{1}_{{\cal W}(a_n,b_n)}(a',b')dadb= C\]
for $(a',b')$ in some large time-frequency domain $G_N$.
We estimate the left-hand-side of (\ref{eq:rrtry}) using the discrepancy as follows. By choosing $C$ equal to the volume of the Heisenberg box $\mu({\cal H}(a',b'))$,
\[\abs{\frac{M}{N}\sum_{n=1}^N\mathbf{1}_{{\cal W}(a_n,b_n)}(a',b')- C} = \abs{\frac{M}{N}\sum_{n=1}^N\mathbf{1}_{{\cal H}(a',b')}(a_n,b_n) -\mu({\cal H}(a',b'))} \leq M D_N\big(\{(a_n,b_n)\}_{n=1}^N\big),\]
where $D_N$ is the discrepancy (\ref{eq:disc}).
In this sense, a low discrepancy point set is an optimal CWT sampling set. Hence, (\ref{eq:rrtry}) is true up to an error of $O(\frac{M\log^k(N)}{N})$, where $k=1$ for a low discrepancy point set, and $k=2$ for a low discrepancy sequence.

We can formulate an equivalent analysis for the middle frequency atoms of the LTFT. Here, we define the LTFT funnel, centered at $(a,b,c)$, as the domain $\mathcal{W}$ of $(a',b',c')$ points which are confined by the surfaces
\[c'=c\pm \nu\]
\[b'=\frac{b}{a\pm\frac{1}{c'}}\]
\[a' = a \pm \frac{c'}{b'}.\]
Here, $\nu$ is a constant that represents the range of modulations $c'\in[-\nu,\nu]$ of LTFT atoms $f_{a,b,c}$, for which $f_{a,b,(c+ c')}$ has significant correlation with $f_{a,b,c}$. %This is constant since the amount of mudulation is proportional to the support in the frequency domain of the window. Namely, the modulation is on the mother wavelet, and then it is dilated.
It is easy to see that $\mathbf{1}_{\mathcal{W}(a,b,c)}(a',b',c') = \mathbf{1}_{\mathcal{H}(a',b',c')}(a,b,c)$, where $\mathcal{H}(a',b',c')$ is the 3D rectangle defined by
\[a'-\frac{c'}{b'} < a < a'+\frac{c'}{b'}\]
\[b'-\frac{b'}{c'} < b < b'+\frac{b'}{c'} \]
\[c'-\nu<c<c'+\nu. \]
Now, similarly to the CWT case, we can show that the funnels of the LTFT middle atoms cover the middle frequencies approximately uniformly. Moreover, the upper and lower frequency atoms of the LTFT are STFT atoms are represented by Heisenberg boxes, which cover the high and low frequency bands approximately uniformly as well. 

\subsection*{Acknowledgements}

R.L. acknowledges support by the DFG SPP 1798 “Compressed Sensing
in Information Processing” through Project Massive MIMO-II.

%G.K. acknowledges partial support by the Berlin Mathematics Research
%Center MATH+ through Project EF1x1, the DFG SPP 1798 “Compressed Sensing
%in Information Processing” through Project Massive MIMO-II, and the BMBF
%through Project MaGriDo.

G.K. acknowledges support by the Deutsche Forschungsgemeinschaft (DFG)
through Project KU 1446/21-2 within SPP 1798.

H.A. acknowledges support by US-Israel Binational Science Foundation grant 2017698, and Israeli Science Foundation grant 1272/17.

%Journals:

%Signal Processing

%https://www.journals.elsevier.com/signal-processing/

%Journal of Computational and Applied Mathematics

%https://www.journals.elsevier.com/journal-of-computational-and-applied-mathematics

\end{document}